 \newtheorem{theorem}{Theorem}[section]
\newtheorem{lemma}{Lemma}[section]
 \newtheorem{proposition}{Proposition}[section]
 \newtheorem{corollary}{Corollary}[section]
 \newtheorem{rem}{Remark}[section]
\begin{document}

\title{{\LARGE   Triple scale analysis of periodic  solutions and
    resonance  of some asymmetric non linear
    vibrating  systems  
     } } 
 \author{
Nadia Ben Brahim\\
  University of Tunis, El Manar,  Ecole nationale
  d'ing\'enieurs de Tunis (ENIT), \\
Laboratory of  civil engineering (LGC),\\
BP 37, 1002 Tunis Belv\'ed\`ere,
Tunisia \\
 and\\
B. Rousselet,
  email: {\sf br\char64math.unice.fr} \\
 University of  Nice Sophia-Antipolis, Laboratory J.A. Dieudonn\'e \\ 
U.M.R.  C.N.R.S. 6621, Parc Valrose, \\ F 06108 Nice,
 C{e}dex 2, France }
\date{}

\maketitle
\bibliographystyle{alpha}

\begin{abstract}
 We consider {\it small solutions} of a vibrating mechanical system with
smooth non-linearities for which we provide an approximate solution by
using a triple scale analysis; a rigorous proof of convergence of the
triple scale method is included; for the forced response, a stability
result is needed in order to prove convergence in a neighbourhood of a
primary resonance. The amplitude of the response with respect to the
frequency forcing is described and it is related to the frequency of a
free periodic vibration.

{\bf Keywords:} triple scale expansion; periodic solutions; nonlinear vibrations; normal modes
\end{abstract}

\section{Introduction}
\label{sec:intro}

In this article, we perform a triple scale analysis of small periodic
solutions of free vibrations of a discrete structure without damping
and with a local smooth non-linearity; then we consider a similar system with
damping and a periodic forcing in a resonance situation.

Several experimental studies  show that it is possible to detect defects in a structure by considering its vibro-acoustic response to an external actuation;  there is a  vast literature in applied physics.
We recall some papers related to the use of the frequency response for
non destructive testing; in particular generation of higher harmonics,
cross-modulation of a high frequency by a low frequency (often called
intermodulations in telecommunication):
\cite{ekimov-didenkulov-kasakov99},
\cite{moussatov-castagnede-gusev}; in \cite{vdb-lagier-groby}, "a
vibro-acoustic method, based on frequency modulation, is developed in
order to detect defects on aluminium and concrete beams''; experiments have been performed on a real bridge by G. Vanderborck
with four prestressed cables: two undamaged cables, a damaged one and
a safe one but damaged at the anchor. With routine experimental
checking of  the lowest natural frequency, the presence of the
damaged cable had only been found by comparison with  data collected
15 years ago; the one damaged at the anchor was not found; see details
in \cite{vdb-lagier04}, \cite{rousseletvdb05}.

However the  analysis {\it per se} of non linear vibration is also an important topic
from the  academic and  industrial viewpoint. In this work,  we are
interested in the  behaviour due to a local non linear stress-strain
law; first, we consider free vibration  and then  forced response of a
 damped system  with excitation frequency close to a frequency
of the free system ; so, this local stress-strain law is assumed to be:
\textbf{$N= k\tilde u +c\tilde u^{2}+d \tilde u^{3}$},
where N is the normal force and $\tilde u$ is the 
elongation. The elastodynamic problem of continuum mechanics leads
after discretization by finite elements to a system of non linear
differential equations of second order, thus, this paper deals with
such systems with several degrees of freedom. We determine an
asymptotic expansion of {\it small periodic solutions} of a discrete
structure; we use   the method of triple scale \cite{nayfeh81} and 
compare these results with a numerical integration program; also, we
perform a numerical Fourier transform to determine the frequencies
and compare with that of the  linear system. 

Our approach is only valid in the low frequency range and we have
bypassed the propagation of acoustic waves in the structure; this
point has been studied in
\cite{junca-lombard09},\cite{junca-lombard12}.
 The case of  rigid  contact which is also important from the point of
 view of theory and applications has been addressed in several papers,
 for example \cite{janin-lamarque01}, and a synthesis in \cite{Bastien-Bernard-lamarque}
; a numerical method to compute periodic solutions is proposed in \cite{laxalde-legrand11}
.
Asymptotic expansions have been used for
a long time; such methods are introduced in the famous memoir of Poincar\'e
\cite{poincare92-99}; a classic general book on asymptotic methods is
\cite{Bogolyu-Mitropo-ru} with french and English translations \cite{Bogolyu-Mitropo-fr,Bogolyu-Mitropo-eng}; introductory material is in \cite{nayfeh81},   \cite{miller2006}; 
 a detailed account of the
averaging method with precise proofs of convergence may be found in
\cite{sanders-verhulst}; an analysis of several methods including
multiple scale expansion may be found in \cite{murdock91};
the case of vibrations with
unilateral springs have been presented in \cite{sj-brgdr08,junca-br10,vestroni08,
hr-brgdr08,hazim-tamtam09,hazim-ecssmt}; this topic has been presented
by H. Hazim at  ``Congr\`es Smai'' in 2009; more details are to be
found in his thesis defended at University of Nice Sophia-Antipolis in
2010.
In a forthcoming paper, such a non-smooth case will be considered as well
as a numerical algorithm based on the fixed point method used in
\cite{rousselet:hal-perio-lip}. 
The case of vibrations with weak grazing unilateral contact has been
presented by S. Junca and Ly Tong at  4th Canadian Conference on
Nonlinear Solid Mechanics 2013; in
\cite{jiang-pierre-shaw04}
 a numerical approach for large solutions of
piecewise linear systems is proposed. A review paper for so called
``non linear normal modes'' may be found in \cite{nnm-kpgv}; it includes numerous
papers published by the mechanical engineering community; several application
fields have been addressed by this community; for example in \cite{mikhlin10} ``nonlinear
vibro-absorption problem, the cylindrical shell nonlinear dynamics and
the vehicle suspension nonlinear dynamics are analysed''.
Preliminary versions of these results may be found in
\cite{benbrahim-tamtam09} and have been presented in conferences
  \cite{benbrahimGdrafpac,benbrahimSmai}; a proof of convergence of
  double scale expansion is to be found in the preliminary work
\cite{nbb-br-doubl}.
 
In the present text and in the conclusion, we compare the  use of
double or triple scale expansion. We emphasize that the use of three time scales, instead of two times
scales presented in the preliminary work \cite{nbb-br-doubl}, provides
a much improved insight in the behavior of the forced response close to resonance.
{\it In this paper}, as an introduction, in a first step, we consider {\it
  small solutions} of a system with one degree of freedom; we compare
free vibration frequency and the frequency of the periodic forcing for
which the amplitude is maximal. Then we address a system with several
degrees of freedom, we look for periodic free vibrations (so called
non linear normal modes in the mechanical engineering community); we
compare this frequency with the response to a periodic forcing close
to resonance.

\section{  One degree of freedom,  quadratic and cubic non linearity}

We consider a stress-strain law with a
strong cubic non linearity:
$$N=k \tilde u + \Phi(\tilde u,\epsilon) \text{ with  }  \Phi(\tilde u,\epsilon)=m c \tilde u^2
+\frac{m d}{\epsilon} \tilde u^3$$
where $\epsilon$ is a small parameter which is also involved in the size
of the solution; $m$ is the mass, $k$ the linear rigidity of the spring and $\tilde u$ the change of length of the spring; the choice of this scaling
provides frequencies which are amplitude dependent at first order.
\subsection{ Free vibration,  triple scale expansion up to second order}
Using second Newton law, free vibrations  of a mass attached to such a
spring are governed by:
\begin{equation}
  \label{eq:grosse-cub-lib}
  \ddot{ \tilde u} + \omega^2 \tilde u +c\tilde u^2 +\frac{d }{\epsilon}\tilde u^3=0.
\end{equation}

\begin{rem}
  \begin{itemize}
  \item
We intend to look for a small solution therefore, we consider a change
of function 
\fbox{$\tilde u=\epsilon u$} and obtain the transformed equation:
$$ \ddot u + \omega^2 u +\epsilon c u^2 + \epsilon d u^3=0.$$

In this form, this is a Duffing equation for which exists a vast literature, for example see the expository book \cite{kov-bren11}.
\item For the scaling we have chosen, 
when we use double scale analysis, we remarked in \cite{benbrahim-tamtam09} that 
the approximation that we obtain does not involve explicitly the coefficient $c$  of the quadratic term; this coefficient is only involved in the proof of the validity of the expansion. In particular the frequency shift only involves the coefficient $d $ of  the cubic term.
\item
However when we use three time scales, the coefficient of the
quadratic term is involved in the frequency shift.

\item On the other hand, if we would let $\epsilon \rightarrow +\infty$ in \eqref{eq:grosse-cub-lib} , we would get a singular perturbation problem; this is not considered here.
 \end{itemize}
\end{rem}
As we look for a {\it small} solution with a triple scale analysis for time; we set  
\begin{equation}
  T_0= \omega t, \quad T_1=\epsilon t,~~   T_2=\epsilon^{2} t,  
\text{ hence } D_0u=\frac{\partial u}{\partial T_0}, \quad
D_1u=\frac{\partial u}{\partial T_1} \; \text{ and } \; D_{2}u= \frac{\partial u}{\partial T_{2}}
\end{equation}
and we obtain
\begin{align*}
  \frac{d u}{dt}&=\omega D_{0}u + \epsilon D_{1}u + \epsilon^{2} D_{2}u\\ 
 \frac{d^{2} u}{dt^{2}}&= \omega^2 D_{0}^{2}u+ 2 \epsilon \omega
 D_{0}D_{1}u+2\epsilon^{2} \omega D_{0}D_{2}u+\epsilon^{2}D_{1}^{2}u+2\epsilon^{3}D_{1}D_{2}u+\epsilon^{4}D_{2}^{2}u.
\end{align*}
As we look for a small solution we consider  initial data
$\tilde u(0)=\epsilon a_{} + \epsilon^{2}v_{1}+ \mathcal  O(\epsilon^3)$ and
$\dot{ \tilde u}(0)=\mathcal  O(\epsilon^3) $; or $ u(0)= a_{} + \epsilon^{}v_{1}+ \mathcal  O(\epsilon^2)$ and
$\dot{  u}(0)=\mathcal  O(\epsilon^2) $;
we expand the solution with   the {\it ansatz}
\begin{equation}
  \label{eq:dev-doublech}
u(t)= u(T_0, T_{1},T_{2})= u^{(1)}(T_0, T_{1},T_{2})+\epsilon^{}u^{(2)}(T_0, T_{1},T_{2})+\epsilon ^{2} r(T_{0}, T_{1},T_{2});
\end{equation}

so we obtain:

\begin{multline*}
  \frac{du}{dt}= \frac{du^{(1)}}{dt}+ \epsilon^{}\frac{du^{(2)}}{dt}+ \epsilon^{2}\frac{dr}{dt}
  = \frac{du^{(1)}}{dt}+
  \epsilon^{}\frac{du^{(2)}}{dt}+\epsilon^{2}D_{0}r+
  \epsilon^{2}(\frac{dr}{dt}- \omega D_{0}r)\\
= [\omega D_0u^{(1)} +\epsilon D_1 u^{(1)} +
\epsilon^{}D_{2}u^{(1)}] + 
\epsilon[ \omega D_0u^{(2)} +\epsilon D_1 u^{(2)} +
\epsilon^{}D_{2}u^{(2)}]\\ 
+ \epsilon^{2}[ \omega D_0r +\epsilon D_1 r + \epsilon^{2}D_{2}r] 
\end{multline*}
and with the formula 
\begin{equation*}
  \mathcal{D}_3 r = \frac{1}{\epsilon}\left( \frac{d^2 r}{d t^2}
    -\omega^2 D_0^2 r \right)=
 2 \omega D_0D_1 r +\epsilon \left [ 2\omega D_0D_2 r+ D_1^2 r+ 2\epsilon D_1D_2 r,
 \right ]+\epsilon^{3} D_2^2 r,
\end{equation*} 
we get
\begin{align}
\begin{split}
\label{eq:d2udt2u1u2}
 \frac{d^2u}{dt^2}&=\frac{d^2u^{(1)}}{dt^2}+\epsilon\frac{d^2u^{(2)}}{dt^2}+ \epsilon^{2}\frac{d^2r}{dt^2}=
\frac{d^2u^{(1)}}{dt^2}+\epsilon^{}\frac{d^2u^{(2)}}{dt^2}+\epsilon^2 D_0^2r +\epsilon^3 {\cal D}_3 r\\
&= \omega^2 D_{0}^{2}u^{(1)}+ \epsilon^{}\left[ 2 \omega
  D_{0}D_{1}u^{(1)} + \omega^2 D_{0}^{2}u^{(2)}\right] \\ 
&\quad \quad \quad +\epsilon^{2}\left[ 2\omega D_{0}D_{2}u^{(1)} +
  D_{1}^{2}u^{(1)}+2 \omega D_{0}D_{1}u^{(2)}+ D_{0}^{2}r\right]\\
&\quad \quad \quad  \quad \quad
+\epsilon^{3}\left[2D_{1}D_{2}u^{(1)}+2 \omega D_{0}D_{2}u^{(2)}+D_{1}^{2}u^{(2)}   +  \mathcal{D}_3 r  \right] \\
& \quad \quad \quad  \quad \quad \quad  \quad + \epsilon^{4}\left[  D_{2}^{2}u^{(1)}+ 2D_{1}D_{2}u^{(2)}+ \epsilon D_{2}^{2}u^{(2)}\right] .
\end{split}
\end{align}

We plug expansions \eqref{eq:dev-doublech},\eqref{eq:d2udt2u1u2} into \eqref{eq:grosse-cub-lib};  by identifying the powers of
 $\epsilon$ in the expansion of  equation \eqref{eq:grosse-cub-lib}, we obtain:

\begin{align}
\Bigg \{
  \begin{array}[h]{rl}
 &D_{0}^{2}u^{(1)}+  u^{(1)} =0  \\
& \omega^2 \left[ D_{0}^{2}u^{(2)}+ u^{(2)} \right]= S_{2} \\
& \omega^2 \left[D_{0}^{2}r +  r \right]=S_{3} \label{eq:D03r=S3}\\
  \end{array} 
\end{align}
with
\begin{align*}
&S_{2}=-c {u^{(1)}}^{2} - d u^{(1)3} -2 \omega D_{0}D_{1}u^{(1)} ~~\text{and}\\
&S_3 = -2 {c} u^{(1)}u^{(2)}-3d u^{(1)2}u^{(2)} -2 \omega D_{0}D_{2}u^{(1)}
-D_{1}^{2}u^{(1)} -2 \omega D_{0}D_{1}u^{(2)} - \epsilon R(\epsilon,r,u^{(1)},u^{(2)}),
\end{align*}
with
\begin{multline*}
 R(\epsilon,r, u^{(1)},u^{(2)}) =  
2D_{1}D_{2}u^{(1)}+2\omega D_{0}D_{2}u^{(2)}+D_{1}^{2}u^{(2)} \\  +   c
u^{(2)2} + 2 c r u^{(1)} + 3 d u^{(1)}u^{(2)2}+ 3 d u^{(1)2}r +\mathcal{D}_3 r\\ +
\epsilon  \left (D_{2}^{2}u^{(1)}+ 2D_{1}D_{2}u^{(2)}+ \epsilon
  D_{2}^{2}u^{(2)} \right) + \epsilon \rho (u^{(1)}, u^{(2)},r ,\epsilon)
\end{multline*}
and with $\rho$, a polynomial in $r$:
\begin{multline*}
 \rho(u^{(1)},u^{(2)},r,\epsilon)= 
 2 cru^{(2)}+ du^{(2)3}+ +6 d u^{(1)}u^{(2)}r \\
 + \epsilon( cr^{2}+ 3du^{(2)2}r+3du^{(1)}r^{2}) +\epsilon^{2}[3du^{(2)}r^2+\epsilon d r^{3}].
\end{multline*}
For convenience, we perform the change of variable $\theta(T_0,T_1,T_2)=T_0+\beta_{}(T_1,T_{2})$; we notice that $ D_{0}\theta=1; D_{1}\theta= D_{1}\beta_{}$ and $D_{2}\theta=D_{2}\beta_{}$;
we solve  the first equation of \eqref{eq:D03r=S3} with $D_0u^{(1)}(0)=0$, we get:
\begin{equation}
  \label{eq:u1=}
  u^{(1)}=a_{}(T_1,T_{2}) ~\cos(\theta ).
\end{equation}
 \begin{rem}
  We notice that $a_{}$ and $\beta_{}$ are not constants but functions
  of time scales $T_{1} ~and~ T_{2}$ because u depends on these times
  scales. The dependence of these functions with respect to $T_{1}
  ~and~ T_{2}$ will be determined by solving the equations of the
  following orders and eliminating the so-called secular terms.
\end{rem}
 First, we  determine the dependence on $T_{1}$;
with simple manipulation of the second equation of  \eqref{eq:D03r=S3},  we obtain
\begin{equation*}
  S_2= -\frac{c a_{}^2}{2} (\cos(2\theta) +1) - \frac{d a_{}^{3}}{4} \cos(3\theta)+ \cos(\theta)\left( \frac{-3da_{}^{3}}{4}+ 2\omega_{} a_{} D_1 \beta_{}\right) + 2\omega_{}D_1 a_{} \sin(\theta) 
\end{equation*}
we gather terms at angular frequency $\omega_{}$: 
\begin{equation*}
  S_2=-\frac{3 d a_{}^3}{4} \\cos(\theta)  +2\omega_{} \left [D_1 a_{}
    \sin(\theta)+ a_{}D_1 \beta_{} \\cos(\theta) \right]+S_2^{\sharp}
  \quad \text{ where}
\end{equation*}
\begin{equation*}
  S_2^{\sharp}= \frac{-c a_{}^2}{2}(1+ \cos(2 \theta))-\frac{d a_{}^3}{4} \cos(3\theta) 
\end{equation*}
It appears some terms at the frequency of the system, these terms provide a solution
$u^{(2)}$ of the equation \eqref{eq:nddllibre-alpha} which is non periodic
and non bounded over long time intervals.
We will eliminate these so-called secular terms by imposing:
\begin{equation}
  \label{eq:Da,Dbeta-libre}
  D_1a_{}=0 \text{ and } \quad  D_1 \beta_{}=\frac{3d a_{}^2}{8\omega_{} }
\end{equation}
 the solution of the second equation of  \eqref{eq:D03r=S3}, is:
\begin{equation}
 u^{(2)}=  \frac{-ca_{}^{2}}{2\omega_{}^{2}}~ + ~\frac{ca_{}^{2}}{6 \omega_{}^{2}}\cos (2\theta)+ \frac{da_{}^{3}}{32 \omega_{}^{2}}\cos(3\theta).
\end{equation} 
\begin{rem}
 We have omitted the term at frequency $\omega_{}$ which is
redundant with $u^{(1)}$; however this choice is connected to the
value of the initial condition; see Remark \ref{rem:u0=ea+e2}.  
\end{rem}

For the third equation of  \eqref{eq:D03r=S3}, the unknown is $r$;
this equation includes non linearities; we do not solve it but we show
that the solution is bounded on an interval dependent on $\epsilon$. 
We use the values of $u^{(1)}, u^{(2)}$ in $S_3$.
Intermediate computations:
\begin{equation*}
  \label{eq:u1u2=}
  u^{(1)} u^{(2)}=  \frac{-5ca_{}^{3}}{12\omega_{}^{2}}\cos(\theta) + \frac{ca_{}^{3}}{12\omega_{}^{2}}\cos(3\theta)~ + \frac{da_{}^{4}}{64 \omega_{}^{2}}(\cos(2\theta)+\cos(4\theta)).
\end{equation*}
\begin{equation*}
  \label{eq:u1cu2=}
   (u^{(1)})^2 u^{(2)}=\frac{-5ca^4}{24 \omega^2}+\frac{d
     a^5}{128\omega^2}\cos(\theta)-\frac{ca^4}{6\omega^2}\cos(2\theta)+
\frac{da^5}{64\omega^2}\cos(3\theta)+
\frac{ca^4}{24\omega^2}\cos(4\theta) +\frac{d a^5}{128 \omega^2}
\cos(5\theta)
\end{equation*}
The
right hand side,  after some manipulations is:

\begin{multline*}
S_3=   \sin(\theta) \left (2\omega D_{2}a_{}+ 2D_{1}a_{} D_{1}\beta_{}+
   a_{}D_{1}^{2}\beta_{} \right)\\ 
+\cos(\theta) \left(2 \omega a_{} D_{2}\beta_{} 
  -D_{1}^{2}a_{} + a_{} (D_{1}\beta_{})^{2}+ \frac{5c^{2}a_{}^{3}}{6\omega_{}^{2}} -\frac{3d^{2}a_{}^{5}}{128\omega_{}^{2}} \right) \\
+ S^{\sharp}_3 - \epsilon R(r, \epsilon, u^{(1)}, u^{(2)}) 
\end{multline*}
with
\begin{multline*}
S^{\sharp}_3=\frac{5 d c a_{}^{4}}{8 \omega_{}^{2}}+
 \sin(2\theta) \left(  \frac{4ca_{}}{3 \omega_{}} D_{1}a_{}  \right)+
\cos(2\theta) \left(\frac{4ca_{}^{2}}{3 \omega_{}}
  D_{1}\beta_{} + \frac{15 cd  a_{}^{4}}{32\omega_{}^{2}} 
 \right) 
 \\ 
+ \sin(3\theta) \left(  \frac{9da_{}^{2}}{16
    \omega_{}} D_{1}a_{} \right)
+ \cos(3\theta) \left ( \frac{- c^{2} a_{}^{3}}{6\omega_{}^{2}}-
  \frac{3d^{2}a_{}^{5}}{64 \omega_{}^{2}} + \frac{9da_{}^{3}}{16
    \omega_{}} D_{1}\beta_{} \right) \\ + \cos(4\theta)( \frac{-5c d  a_{}^{4}}{32\omega_{}^{2}}) + \cos(5\theta)( \frac{-3d^{2}a_{}^{5}}{128\omega_{}^{2}}).
\end{multline*}

By imposing
\begin{align*}
\label{eq:D2...beta} 
& 2\omega_{} D_{2}a_{}+ 2D_{1}a_{} D_{1}\beta_{}+ a_{}D_{1}^{2}\beta_{}=0\\
&  2 \omega_{} a_{} D_{2}\beta_{}-D_{1}^{2}a_{} + a_{} (D_{1}\beta_{})^{2}+ \frac{5c^{2}a_{}^{3}}{6\omega_{}^{2}}-\frac{3d^{2}a_{}^{5}}{128\omega_{}^{2}}=0
\end{align*}
we get that 
$S_3=S_3^{\sharp}-\epsilon R(\epsilon,u^{(1)},u^{(2)},r)$ no longer contains   any term  at  frequency $\omega_{}$.

As $D_1a=0$ and $ D_1 \beta_{}=\frac{3d a_{}^2}{8\omega_{}}$,
we obtain
 $$ 2 \omega_{} a_{} D_{2}\beta_{} + a_{} \left
  (\frac{9d^2a^4}{64 \omega^2} \right)+ \frac{5c^{2}a_{}^{3}}{6\omega_{}^{2}}-\frac{3d^{2}a_{}^{5}}{128\omega_{}^{2}}=0.$$
So,
 \begin{equation}
\label{eq:a2...b2}	
 D_{2}a_{}(T_{2})=0 ~~~~ and ~~~~ D_{2}\beta_{}(T_{2})=(-\frac{5c^{2}a_{}^{2}}{12\omega_{}^{3}} - \frac{15 d^{2} a_{}^{4}}{256\omega_{}^{3}}).
\end{equation}
As $a$ and $\beta$ do not depend on $T_0$, we note that:
\begin{equation}
\label{eq:a=alpha...D1D2}
\left\{
  \begin{aligned}
 &\frac{da}{dt}=\epsilon D_{1}a_{}+\epsilon^{2}D_{2}a_{}+\mathcal O (\epsilon^{3})\\
 &\frac{d\beta}{dt}=\epsilon D_{1}\beta_{} +\epsilon^{2}D_{2}\beta_{}
 +\mathcal O (\epsilon^{3}),
\end{aligned}
\right.
\end{equation} 
thus taking into account \eqref{eq:Da,Dbeta-libre} and to \eqref{eq:a2...b2}, we obtain:
\begin{equation}
\label{eq:a1b1...b2}
 \frac{da_{}}{dt}=0 ~~~~ \text{ and }~~~~ \frac{d\beta_{}}{dt}=\epsilon \frac{3 d a_{}^{2}}{8\omega_{}} +\epsilon^{2}(\frac{-5c^{2}a_{}^{2}}{12\omega_{}^{3}} - \frac{15d^{2} a_{}^{4}}{256\omega_{}^{3}})
\end{equation}
therefore, the solution of these equations is:
\begin{equation}
\label{eq:da1...db1}
 a_{}= cte~~~~ \text{and} ~~~~ \beta_{}(t)=\left[ \epsilon \frac{3 d a_{}^{2}}{8\omega_{}} +\epsilon^{2}(-\frac{5c^{2}a_{}^{2}}{12\omega_{}^{3}} - \frac{15d^{2} a_{}^{4}}{256\omega_{}^{3}})\right] t.
\end{equation}
The constant of integration is chosen to be zero as the initial
velocity  satisfies $ \dot{u}(0)=\mathcal 0(\epsilon^3)$.

In order to show that,  $r$ is bounded, after eliminating  terms at
angular frequency $\omega_{}$, we go back to the $t$ variable in the third
 equations of \eqref{eq:D03r=S3}.
\begin{equation}
\label{eq:ddotr=free1ddl}
 \frac{d ^{2} r}{d t^{2}} + \omega_{}^2 r  = \tilde S^{}_{3}\\ 
\end{equation}
with
$\tilde S^{}_{3}= S^{\sharp}_3(t, \epsilon) - \epsilon \tilde R^{}(r, \epsilon, u^{(1)}, u^{(2)})$
where
\begin{multline*}
 S^{\sharp}_3(t, \epsilon)=\frac{5 d c a_{}^{4}}{8
   \omega_{}^{2}}
+\cos(2(\omega_{}t+ \beta_{}(t))) \left( \frac{15 c d a^4}{32\omega_{}^{2}} +
   \frac{c d a_{}^{4}}{2 \omega_{}^{2}} \right)
+ \sin(2(\omega_{}t+
 \beta_{}(t))) ( \frac{c d a_{}^{4}}{2 \omega_{}^{2}} ) \\+
 \cos(3(\omega_{}t+ \beta_{}(t))) \left( \frac{- c^{2}
     a_{}^{3}}{6\omega_{}^{2}}- \frac{3d^{2}a_{}^{5}}{64
     \omega_{}^{2}} + \frac{27d^{2}a_{}^{5}}{128 \omega_{}^{2}}
 \right)  + \sin(3(\omega_{}t+ \beta_{}(t)))(
 \frac{9d^{2}a_{}^{5}}{128 \omega_{}^{2}}) \\
+ \cos(4(\omega_{}t+ \beta_{}(t)))( \frac{-3 c d  a_{}^{4}}{32\omega_{}^{2}})
+ \cos(5(\omega_{}t+ \beta_{}(t)))( \frac{-3d^{2}a_{}^{5}}{128\omega_{}^{2}})
\end{multline*}
\begin{equation*}
 \text{ and } \; \tilde R^{}= R(\epsilon,r,u^{(1)},u^{(2)})-\mathcal{D}_{3}r.
\end{equation*}
 in which the remainder $ \tilde R^{}$, the functions $u^{(1)},u^{(2)}$
 and their partial derivatives
with respect to  $T_1, T_2$ are expressed  with  the variable $t$.

\begin{proposition}
 There exists $\gamma>0$ such that for all $t \le
 t_{\epsilon}=\frac{\gamma}{\epsilon^{2}}$, the solution $\tilde
 u=\epsilon u$ of  \eqref{eq:grosse-cub-lib} has the following expansion, 
\begin{align}
\left\{
\begin{array}{rl}
 & \tilde u(t)=\epsilon a_{}~ \cos( \nu_{\epsilon} t) ~+ ~\epsilon^{2}\left( \frac{-ca_{}^{2}}{2\omega_{}^{2}}~ + ~\frac{ca_{}^{2}}{6 \omega_{}^{2}}\cos(2 \nu_{\epsilon} t) + \frac{da_{}^{3}}{32 \omega_{}^{2}}\cos(3 \nu_{\epsilon} t)\right) + \epsilon^3 r(\epsilon, t) \\
& \tilde u(0)=\epsilon a_{} + \epsilon^{2}(\frac{-ca_{}^{2}}{3\omega_{}^{2}} + \frac{da_{}^{3}}{32 \omega_{}^{2}})+O(\epsilon^{3}), \dot {u}(0)=O(\epsilon^2)\\
\end{array}
\right.
\end{align}
with
\begin{equation}
\label{eq:nualpha}
 \nu_{\epsilon}= \omega_{}+ \epsilon \frac{3 d a_{}^{2}}{8\omega_{}} + 
\epsilon^{2} \left (-\frac{5c^{2}a_{}^{2}}{12\omega_{}^{3}} - \frac{15
    d^{2} a_{}^{4}}{256\omega_{}^{3}} \right) +\mathcal O(\epsilon^3)
\end{equation}
 and   $r$ is uniformly bounded in $C^{2}(0,t_{\epsilon})$.
\end{proposition}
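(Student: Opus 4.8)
The proposition makes three assertions: the closed form of the expansion, the value \eqref{eq:nualpha} of $\nu_\epsilon$, and the uniform bound on the remainder $r$. The first two are obtained by assembling what precedes: inserting the slow-scale solution (constant amplitude $a$, phase $\beta(t)$) of \eqref{eq:da1...db1}, which already incorporates \eqref{eq:a2...b2}, into \eqref{eq:u1=} and into the formula for $u^{(2)}$, and writing $\theta=\omega t+\beta(t)$, one recognizes $\theta=\nu_\epsilon t$ with $\nu_\epsilon$ as in \eqref{eq:nualpha}; evaluating at $t=0$, where $\beta(0)=0$, yields the stated initial values of $\tilde u$ and $\dot u$. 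Thus the whole substance of the proof is to show that the solution $r$ of \eqref{eq:ddotr=free1ddl} is bounded, together with $\dot r$ and $\ddot r$, uniformly in $\epsilon$ on $[0,t_\epsilon]$, $t_\epsilon=\gamma/\epsilon^2$.

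My plan is to read \eqref{eq:ddotr=free1ddl} as a harmonic oscillator forced by the explicit term $S_3^\sharp$ plus the perturbation $-\epsilon\tilde R$, the latter being a polynomial in $r$ whose coefficients are bounded uniformly in $t$ because $u^{(1)}$, $u^{(2)}$ and their slow derivatives are trigonometric polynomials in $\theta$ with bounded amplitudes. I would first pass to the integral formulation by Duhamel's formula, writing $r(t)$ and $\dot r(t)$ as the free oscillation fixed by the initial data plus the convolution of the kernel $\tfrac1\omega\sin(\omega(t-s))$ against $S_3^\sharp-\epsilon\tilde R$. The point of having imposed the secular conditions leading to \eqref{eq:da1...db1} and \eqref{eq:a2...b2} is that $S_3^\sharp$ carries only the mean mode and the harmonics $2\theta,\dots,5\theta$; since the instantaneous frequencies $k(\omega+\dot\beta)$ with $k\ge 2$ stay bounded away from $\omega$ for $\epsilon$ small, the associated oscillatory integrals are bounded uniformly in $t$, so $S_3^\sharp$ contributes an $\mathcal O(1)$ term to $r$ with no secular growth.

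I would then close the estimate by a continuation (bootstrap) argument. Fix a constant $M$ exceeding the contribution just obtained and set $t^\ast=\sup\{t\le t_\epsilon:\ \|(r,\dot r)\|_{L^\infty[0,t]}\le M\}$. On $[0,t^\ast]$ the polynomial $\tilde R$ is dominated by a constant $C(M)$; the delicate feature is that its part linear in $r$, with coefficient $2cu^{(1)}+3d(u^{(1)})^2$ of nonzero mean $\tfrac32 da^2$, must not be bounded crudely, since a direct estimate would only control $r$ up to times of order $1/\epsilon$. Instead I would transfer the mean of this coefficient to the left-hand side as an $\mathcal O(\epsilon)$ correction of the oscillator frequency and keep only the mean-zero, fast-oscillating part, together with the higher powers $r^2,r^3,\dots$ which carry an extra factor $\epsilon$ through $\rho$, in the forcing; integrating these against the oscillatory kernel should produce a Gronwall inequality whose surviving secular rate is of order $\epsilon^2$ rather than $\epsilon$. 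For such an inequality the bound $\exp(C\epsilon^2 t)$ remains finite on $[0,\gamma/\epsilon^2]$, so for $\gamma$ and $M$ chosen suitably and $\epsilon$ small the continuation estimate is strict, $\|(r,\dot r)\|_{L^\infty[0,t]}<M$, whence $t^\ast=t_\epsilon$. Finally, $\ddot r$ is bounded by reading it off \eqref{eq:ddotr=free1ddl} from the bounds on $r,\dot r$, giving the $C^2$ estimate.

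I expect the genuine obstacle to be exactly this long-time control: one must verify that the only resonant (frequency-$\omega$) contributions of the perturbation $\epsilon\tilde R$ either cancel or are absorbed into $\nu_\epsilon$, so that no secular term of order $\epsilon t$ survives in the Duhamel representation and the remainder stays uniformly bounded on the $1/\epsilon^2$ scale, rather than merely on the $1/\epsilon$ scale reached by the double-scale expansion. This is precisely where the second secular-removal step \eqref{eq:a2...b2}, absent from the two-scale analysis, is decisive.
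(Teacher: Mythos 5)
Your argument for the boundedness of $r$ follows, in its core, the same route as the paper: the paper's proof is a one-line invocation of its technical Lemma \ref{eq:lemmew }, whose content is exactly your first two steps — split off the solution driven by the explicit forcing $S_3^{\sharp}$ (bounded because the secular conditions make it a sum of periodic terms orthogonal to $e^{\pm i t}$, i.e.\ non-resonant), then treat the $\epsilon \tilde R$ term by Duhamel and Bellman--Gronwall inside a continuation argument, using that $\tilde R$ is a polynomial in $r$ with bounded coefficients, hence locally Lipschitz. Up to that point you and the paper coincide.

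Where you genuinely diverge is on the time scale, and here you have put your finger on a real weakness of the paper rather than introduced one. The crude Gronwall bound $|r|\lesssim \frac{C}{k_R}\bigl(\exp(\epsilon k_R t)-1\bigr)$ delivered by Lemma \ref{eq:lemmew } only controls $r$ for $t\le \gamma/\epsilon$ — and indeed the lemma's own conclusion is stated with $T_\epsilon=\gamma/\epsilon$, and the remark following Proposition \ref{prop:conv-dev-forc} likewise concedes $t_\epsilon=\gamma/\epsilon$ — whereas the proposition claims $t_\epsilon=\gamma/\epsilon^{2}$. The paper never bridges this gap. Your proposed remedy (absorb the nonzero mean $\tfrac32 d a^2$ of the coefficient $2cu^{(1)}+3d(u^{(1)})^{2}$ of the part of $\tilde R$ linear in $r$ into an $\mathcal O(\epsilon)$ shift of the oscillator frequency, and show that the remaining mean-zero oscillatory part and the genuinely nonlinear terms contribute only at rate $\epsilon^{2}$ to the Gronwall exponent) is the standard second-order-averaging device and is the right idea; but as written it is a sketch, not a proof — the claimed gain of one power of $\epsilon$ from integrating the mean-zero coefficient against the kernel requires an explicit integration by parts (or normal-form change of variable $r\mapsto r+\epsilon\,(\text{oscillatory})\,r$) that you do not carry out, and one must also check that the frequency shift thus introduced is consistent with the already-fixed $\nu_\epsilon$. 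So: relative to the paper's own proof your proposal is at least as strong on $[0,\gamma/\epsilon]$ and more honest about what is needed beyond it; to actually reach $\gamma/\epsilon^{2}$ you (and the paper) still owe the quantitative averaging estimate.
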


\begin{proof}
Let us use lemma \ref{eq:lemmew } with equation \eqref{eq:ddotr=free1ddl}; set $S=S_3^{\sharp}$;  as we
have enforced  \eqref{eq:a1b1...b2}, it is a periodic bounded
function  orthogonal to $e^{\pm it}$, it satisfies  lemma hypothesis;
similarly set  $g=\tilde R$; it is a polynomial in variable $r$ with
coefficients which are bounded functions, so it is a lipschitzian
function on bounded subsets and satisfies lemma  hypothesis.
\end{proof}
\begin{rem}
  We notice that if we increase $c$, there is a change of convexity
  of the mapping $a \mapsto \nu_{\epsilon}$; this is an effect which
  cannot be noticed by just obtaining  a first order approximation of the
  frequency with a double scale approximation of the solution as in
  \cite{nbb-br-doubl}. See numerical results at the end of  subsection \ref{subsec:forced1ddl}.
\end{rem}
\begin{rem}
\label{rem:u0=ea+e2}
  We can notice that we can also derive the solution which satisfies
  $u(0)=\epsilon a$ by adding to the solution $ -
  \epsilon^{2}(\frac{-ca_{}^{2}}{3\omega_{}^{2}} +
  \frac{da_{}^{3}}{32 \omega_{}^{2}})\cos(\nu_{\epsilon} t)$
\end{rem}
\subsection{ Numerical Results}
In the figure \ref{fig:fft1ddlfree}, we find plots of the Fourier
transform of solutions;
on the left, the linear case, we notice one frequency and on the
right, three frequencies  in the non linear case.
the Fourier transform displays the frequencies,
$\nu_{1}=0.164$;  $2\nu_{1}=0.329$;  $3 \nu_{1}= 0.493$
\begin{figure}[h] 
\includegraphics[height=7cm,width=6cm]{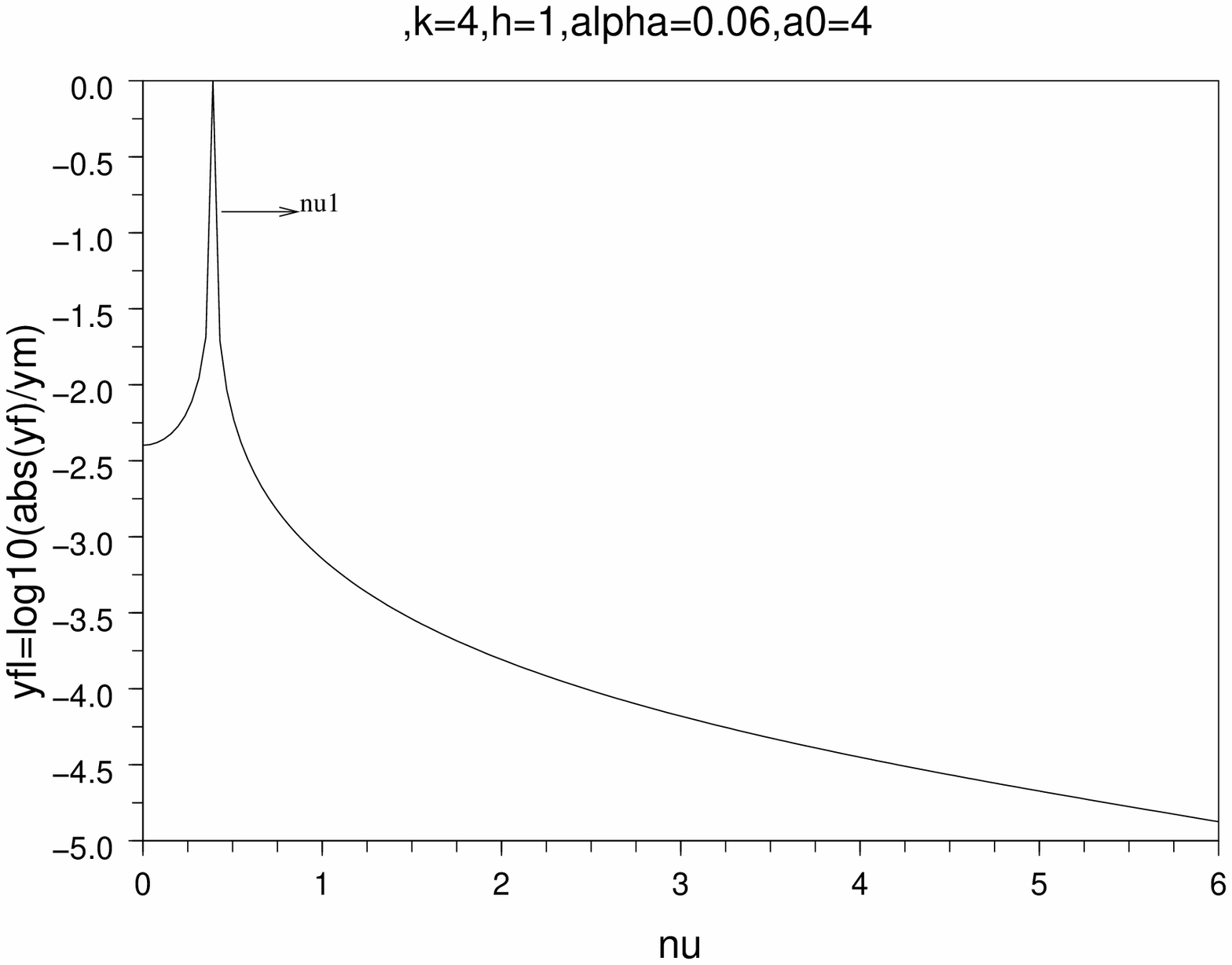}
\includegraphics[height=7cm,width=8cm]{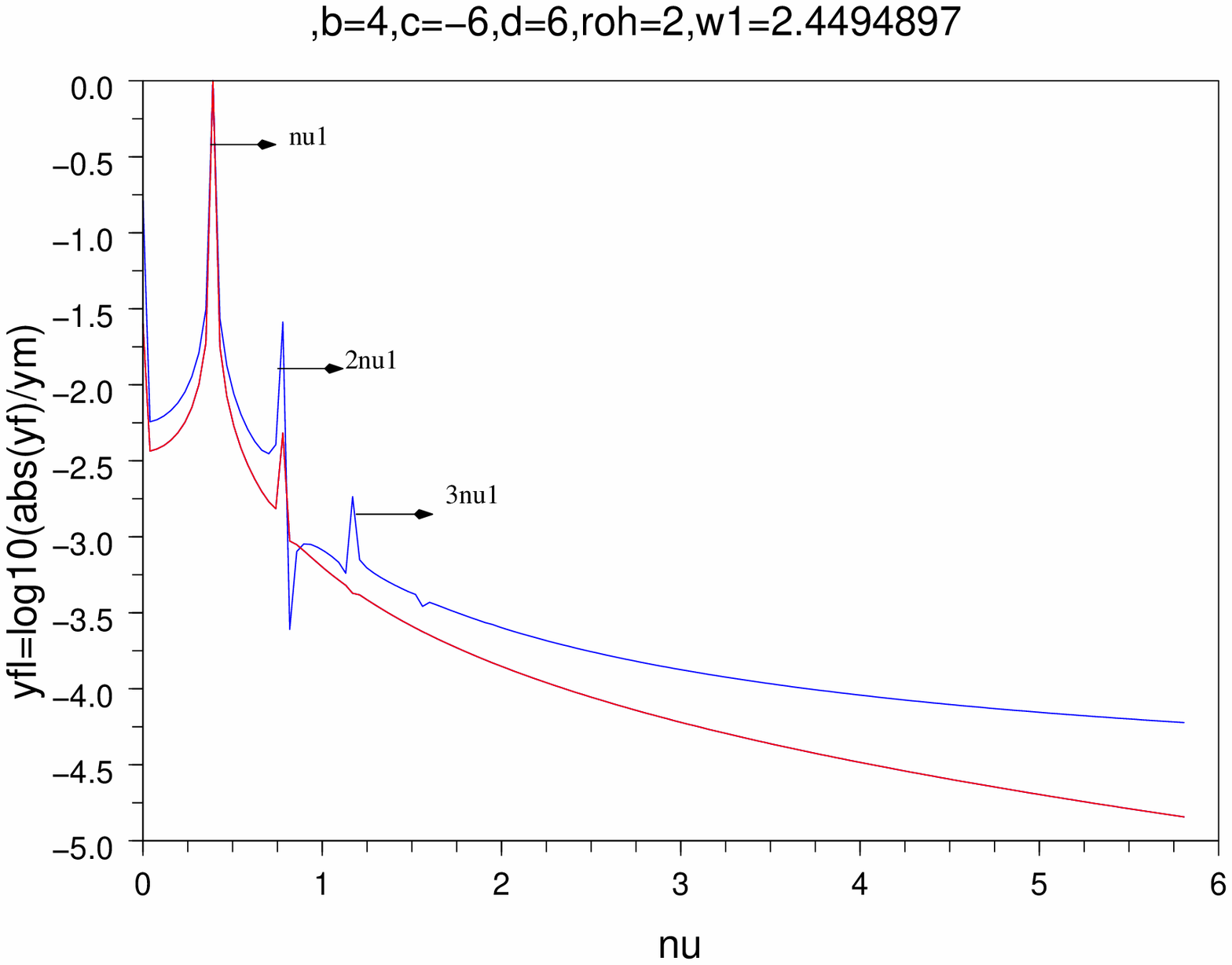}
\caption{ Dynamic frequency shift(fft) linear(left) and a non linear
  element with two methods(numerical(blue), asymptotic expansion( red))}
\label{fig:fft1ddlfree}
\end{figure}


We notice good correlation between analytical results of asymptotic expansion and an integration step by step (with Scilab program ODE and numerical fast Fourier transform).

\subsection{Forced vibration, triple scale expansion up to second order}
\label{subsec:forced1ddl}
\subsubsection{Derivation of  the expansion}
Here we consider a similar system with a sinusoidal  forcing at a frequency close
to the free frequency; in the linear case without damping, it is well
known that the solution is no longer bounded when the forcing
frequency goes to the free frequency. Here, we consider the
mechanical system of previous section but with periodic  forcing and we
include some  damping term; the scaling of the forcing term
is chosen so that the expansion works properly; this is a known point, for
example see \cite{nayfeh86}.
\begin{equation}
  \label{eq:grosse-cub-forc}
  \ddot{ \tilde u} + \omega_{}^2 \tilde u +\epsilon \lambda \dot \tilde u +c\tilde u^2 +\frac{d }{\epsilon}\tilde u^3=\epsilon^2 F_m \cos(\tilde{\omega_{\epsilon}} t),
\end{equation}
where $F_m=\frac{F}{m}$ with the mass  $m$;
we assume positive damping, $\lambda >0$ and  excitation  frequency $\omega$ is {\it
  close} to an eigenfrequency of the  linear system in the following way:
\begin{equation}
\label{eq:tiledeomeag=}
\tilde{\omega_{\epsilon}}= \omega_{} +\epsilon \sigma.
\end{equation}
\begin{rem}
  \begin{itemize}
  \item 
 We look for a small solution with a triple scale expansion; as for the free vibrations, we consider a change of function \fbox{$\tilde u =\epsilon u$} and obtain the transformed equation
\begin{equation*}
  \ddot u + \omega_{}^2 u +\epsilon \lambda \dot u +\epsilon cu^2 +
  \epsilon d u^3=\epsilon F_m \cos(\tilde{\omega_{\epsilon}} t).
\end{equation*}
\item 
To simplify the computations, the fast scale $T_0$ is chosen to be
$\epsilon$ dependent.
  \end{itemize}
\end{rem}
 We set:
\begin{equation*}
  \label{eq:T0T1D0D1}
  T_0= \tilde \omega_{\epsilon} t, \quad T_1=\epsilon t \text{ and } T_{2}=\epsilon^{2}t, \text{ therefore } \;
   D_0u=\frac{\partial u}{\partial T_0}, \quad D_1u=\frac{\partial u}{\partial T_1}~ \text{ and }~ D_{2}u= \frac{\partial u}{\partial T_{2}},
\end{equation*}
so
\begin{align}
 \begin{split}
  \label{eq:dotu...ramort}
 &\frac{d u}{dt}=\tilde \omega_{\epsilon} D_{0}u + \epsilon D_{1}u +
 \epsilon^{2} D_{2}u \quad 
\text{and} \\
 &\frac{d^{2} u}{dt^{2}}= \tilde \omega_{\epsilon}^2D_{0}^{2}u+ 2
 \epsilon \tilde \omega_{\epsilon} D_{0}D_{1}u+2\epsilon^{2}D_{0}D_{2}u+\epsilon^{2}D_{1}^{2}u+2\epsilon^{3}D_{1}D_{2}u+\epsilon^{4}D_{2}^{2}u.
 \end{split}
\end{align}

With \eqref{eq:tiledeomeag=}, \eqref{eq:dotu...ramort} and the
following {\it  ansatz}, we look for a small solution:
\begin{equation}
  \label{eq:dev-triplech-forc}
 u(t)= u(T_0, T_{1},T_{2})= u^{(1)}(T_0, T_{1},T_{2})+\epsilon^{}u^{(2)}(T_0, T_{1},T_{2})+\epsilon ^{2} r( T_{0},T_{1},T_{2})
\end{equation}

we obtain:
\begin{multline*}
  \frac{du}{dt}= \frac{du^{(1)}}{dt}+ \epsilon^{}\frac{du^{(2)}}{dt}+ \epsilon^{2}\frac{dr}{dt}
  = \frac{du^{(1)}}{dt}+ \epsilon^{}\frac{du^{(2)}}{dt}+\epsilon^{2}D_{0}r+ \epsilon^{2}(\frac{dr}{dt}- D_{0}r)\\
  = [(\omega+\epsilon \sigma) D_0u^{(1)} +\epsilon D_1 u^{(1)}
  + \epsilon^{}D_{2}u^{(1)}] + \epsilon [ (\omega+\epsilon
  \sigma)D_0u^{(2)} +\epsilon D_1 u^{(2)} +
  \epsilon^{2}D_{2}u^{(2)}]\\ + \epsilon^{2} \omega D_0r
  +\epsilon^{2}(\frac{dr}{dt}- \omega D_{0}r)
\end{multline*}
where we  remark that $\frac{dr}{dt}- \omega D_{0}r= \epsilon D_{1}r+ \epsilon^{2}D_{2}r$ is of degree 1 in $\epsilon$. For the  second derivative, as for the case without forcing, we introduce
\begin{align*}
  \mathcal{D}_3 r&= \frac{1}{\epsilon} (\frac{d^{2}r}{dt^{2}}- 
  \omega^2 D_{0}^{2}r)\\
&=2\tilde \omega D_0D_1 r+ \epsilon\left[ 2\tilde\omega D_{0}D_{2}r+ D_1^2r +2 \epsilon D_2D_1r \right]+ \epsilon^{3}D_{2}^{2}r
\end{align*}
and we get
\begin{align*}
  \frac{d^2u}{dt^2}&=\frac{d^2u^{(1)}}{dt^2}+\epsilon\frac{d^2u^{(2)}}{dt^2}+ \epsilon^{2}\frac{d^2r}{dt^2}=
\frac{d^2u^{(1)}}{dt^2}+\epsilon^{}\frac{d^2u^{(2)}}{dt^2}+\epsilon^2 \tilde\omega^2D_0^2r +\epsilon^3 {\cal D}_3 r\\
 &= \tilde\omega^2 D_{0}^{2}u^{(1)}+ \epsilon^{}\left[ 2
   \tilde\omega D_{0}D_{1}u^{(1)} + \tilde\omega^2
   D_{0}^{2}u^{(2)}\right] \\ \quad \quad \quad &\quad \quad
 +\epsilon^{2}\left[ 2\tilde\omega D_{0}D_{2}u^{(1)} +
   D_{1}^{2}u^{(1)}+2 \tilde\omega D_{0}D_{1}u^{(2)}+ \tilde\omega^2
   D_{0}^{2}r\right]\\ \quad &\quad \quad \quad \quad \quad \quad
 +\epsilon^{3}\left[2D_{1}D_{2}u^{(1)}+2 \tilde\omega D_{0}D_{2}u^{(2)}+D_{1}^{2}u^{(2)}   +  \mathcal{D}_3 r  \right]\\
& \quad \quad \quad \quad \quad \quad \qquad \qquad   +\epsilon^{4}\left[  D_{2}^{2}u^{(1)}+ 2D_{1}D_{2}u^{(2)}+ \epsilon D_{2}^{2}u^{(2)}\right].
\end{align*}
We plug previous expansions into \eqref{eq:grosse-cub-forc};
we obtain:
\begin{align}
\label{eq:1ddlamort-alpha3-force}
\Bigg \{
\begin{array}[h]{rl}
& D_{0}^{2}u^{(1)}+  u^{(1)}= 0 \\
& \omega^2 \left(D_{0}^{2}u^{(2)}+    u^{(2)} \right)=S_{2} \\
& \omega^2 \left( D_{0}^{2}r+   r \right)= S_{3} 
\end{array}
\end{align}
with
\begin{align}
S_{2}&=-c {u^{(1)}}^{2} - d u^{(1)3} - 2 \omega  D_{0}D_{1}u^{(1)}-\lambda \omega D_{0}u^{(1)} -2\omega\sigma D_0^2u^{(1)}  + F_m \cos(T_{0}  )~~
  \text{ and } \\S_{3}&=-2 {c} u^{(1)}u^{(2)}-3d u^{(1)2}u^{(2)}- 2 \omega D_{0}D_{2}u^{(1)} -D_{1}^{2}u^{(1)}  - 2 \omega D_{0}D_{1}u^{(2)} -\sigma^2 D_0^2 u^{(1)} -2\sigma D_0 D_1 u^{(1)}   \\ 
&\quad \quad -2 \omega \sigma D_0^2 u^{(2)}- \lambda \omega D_{0} u^{(2)}- \lambda D_{1} u^{(1)}- \lambda \sigma D_0 u^{(1)}- \epsilon R(\epsilon,r,u^{(1)},u^{(2)})\label{rq:s3...amort}
\end{align}
with

à revoir
\begin{multline*}
 R(\epsilon,r,u^{(1)},u^{(2)})=2D_{1}D_{2}u^{(1)}+2 \omega D_{0}D_{2}u^{(2)}+D_{1}^{2}u^{(2)} \\
    +c u^{(2)2}+ 2cu^{(1)} r+ 3 d u^{(1)}u^{(2)2} +3 d u^{(1)2}r
 + \lambda(\omega D_0r + D_{2}u^{(1)}+ D_{1}u^{(2)}+ \epsilon D_{2}u^{(2)}) \\
  + \epsilon  \left ( D_{2}^{2}u^{(1)}+
 2 D_{1}D_{2}u^{(2)} + \epsilon D_{2}^{2}u^{(2)} \right )+\mathcal{D}_3 r
  +\lambda(\frac{dr}{dt}-\omega D_0r)+ \epsilon \rho(u^{(1)},u^{(2)},r,\epsilon) 
\end{multline*}

and
\begin{align*}
 \rho(u^{(1)},u^{(2)},r,\epsilon)&= 
 2 cru^{(2)}+ du^{(2)3}+ +6 d u^{(1)}u^{(2)}r \\
&\quad \quad + \epsilon( cr^{2}+ 3du^{(2)2}r+3du^{(1)}r^{2}) +\epsilon^{2}[3du^{(2)}r^2+\epsilon d r^{3}].
\end{align*}

 We solve the first equation of \eqref{eq:1ddlamort-alpha3-force}:
\begin{equation}
  \label{eq:u1forc=}
  u^{(1)}=a_{}(T_1,T_{2}) ~\cos\theta
\end{equation}
where we have set $\theta(T_{0},T_{1},T_{2})=
T_0+\beta_{}(T_1,T_{2})$; we use $\cos(T_0)=\cos(\theta)\cos(\beta)+\sin(\theta)\sin(\beta)$
and we obtain
\begin{multline*}
  S_2= -\frac{c a_{}^2}{2} (\cos(2\theta)+1) - \frac{d a_{}^{3}}{4}
  \cos(3\theta)  
+\sin(\theta ) \left[2\omega_{} D_1 a_{}+ \lambda \omega_{}a_{}  + F_m \sin(\beta_{})\right] \\
+ \cos(\theta) \left[2 \omega_{} a_{} D_1 \beta_{}- \frac{ 3 d a_{}^{3}}{4} +2\omega a \sigma + F_m \cos(\beta_{})\right]   
\end{multline*}
\begin{multline*}
\text{ or } \;  S_2= \cos(\theta) \left [\frac{-3d a^3}{4} +F_m \cos(
    \beta) \right] +2
  \omega [D_1a \sin(\theta)+a (D_1 \beta+\sigma ) \cos(\theta)] \\+\sin(\theta)
 \left  [\lambda \omega a  +F_m  \sin(  \beta) \right ]
 + S_2^{\sharp}
\end{multline*}
\begin{equation*}
\text{with } \;   S_2^{\sharp}= -\frac{c a_{}^2}{2} (\cos(2\theta)+1) - \frac{d a_{}^{3}}{4}
  \cos(3\theta).  
\end{equation*}
By imposing 
\begin{align}
\label{eq:d1...Famort}
\left\{
\begin{array}{rl}
& 2  \omega_{} D_1 a_{} + \lambda \omega_{}a_{} = -F_m \sin( \beta_{}) \\
& 2 \omega_{}a_{}  D_1 \beta +2\omega a \sigma-\frac{3da^3}{4}= -F_m\cos( \beta_{}),
\end{array}
\right.
\end{align}
the solution of the second equation of \eqref{eq:1ddlamort-alpha3-force} is:
\begin{equation}
 u^{(2)}=  \frac{-ca_{}^{2}}{2\omega_{}^{2}}~ + ~\frac{ca_{}^{2}}{6 \omega_{}^{2}}\cos(2\theta)+ \frac{da_{}^{3}}{32 \omega_{}^{2}}\cos(3\theta)
\end{equation} 
where we have omitted the term at the frequency $\omega_{}$ is which redundant with $u^{(1)}.$

The third equation of \eqref{eq:1ddlamort-alpha3-force} includes non
linearities, the unknown is $r$, we do not  solve it, but we show that
the solution is bounded on an interval which is $\epsilon$ dependent; the right
hand side is:
 \begin{multline*}
  S_3 = 
 \sin \theta \left[  2 \omega_{}
  D_{2}a_{} + \lambda a_{} D_{1}\beta_{}+ 2 D_{1}a_{}D_{1}\beta_{} + a_{}D_{1}^{2}\beta_{} +2 \sigma D_1a +\lambda a \sigma \right] \\
+\cos\theta \left[  2 \omega_{}a_{}D_{2}\beta_{} - \lambda D_{1}a_{}- D_{1}^{2}a_{}+ a_{} (D_{1}\beta_{})^{2}+ \sigma^2 a +2\sigma a D_1 \beta+ \frac{5 c^{2} a_{}^{3}}{6\omega_{}^2} -\frac{3 d^{2} a_{}^{5}}{128\omega_{}^2}\right]  \\ 
+S_{3}^{\sharp} - \epsilon R(\epsilon,r,u^{(1)},u^{(2)}) 
 \end{multline*}

where
à revoir
\begin{multline}
\label{eq:s3sharptheta}
S_{3}^{\sharp}= \frac{5 c d a_{}^{4}}{8\omega_{}^{2}} + \sin 2\theta
\left[ \frac{4 c a_{}}{3\omega_{}}D_{1}a_{}+ \lambda \frac{ c
    a_{}^{2}}{3\omega_{}} \right] 
+ \cos 2\theta \left[ \frac{4 c
    a_{}^{2}}{3\omega_{}} D_{1}\beta_{}+ \frac{ 15c d
    a_{}^{4}}{32\omega_{}^{2}} \right]  + \\ 
\sin 3\theta \left[ \frac{9 d a_{}^{2}}{16\omega_{}}D_{1}a_{} +
  \frac{3 \lambda d a_{}^{3}}{16\omega_{}} \right] + \cos 3\theta
\left[  \frac{9 d a_{}^{3}}{16\omega_{}}D_{1}\beta_{} - \frac{
    c^{2} a_{}^{3}}{6\omega_{}^{2}} - \frac{3 d^{2}
    a_{}^{5}}{64\omega_{}^{2}}\right]+ \\ 
\cos 4 \theta \left[ \frac{-3 c d a^{4}}{32\omega_{}^{2}} \right] - \frac{3 d^{2} a_{}^{5}}{128\omega_{}^{2}}\cos 5 \theta  
\end{multline}

To eliminate the secular terms,
we impose:
\begin{align}
\label{eq:d2...theta}
\left\{
\begin{array}{rl}
&  2 \omega_{} D_{2}a_{} +\lambda a_{} D_{1}\beta_{} + 2 D_{1}a_{}D_{1}\beta_{} + a_{}D_{1}^{2}\beta_{} +2\sigma D_1 a +\lambda a \sigma= 0\\
& 2 \omega_{}a_{}D_{2}\beta_{}- \lambda D_{1}a_{}- D_{1}^{2}a_{}+ a_{} (D_{1}\beta_{})^{2}+ \sigma^2 a + 2\sigma a D_1 \beta
+ \frac{5 c^{2} a_{}^{3}}{6\omega_{}^2} -\frac{3 d^{2} a_{}^{5}}{128\omega_{}^2} = 0.
\end{array}
\right.
\end{align}

In the system \eqref{eq:d1...Famort} 
the expression of $ D_{1}a_{}, D_{1}\beta$ can be extracted:
\begin{align}
\label{eq:D1a1...gamma}
\left\{
\begin{array}{rl}
&  D_1 a_{}=-\frac{F_m \sin(\beta)}{2\omega_{}}- \frac{\lambda a_{}}{2}  \\
&  D_1 \beta =  -\sigma-\frac{F_m\cos(\beta)}{2 a_{}\omega_{}}+\frac{3da_{}^2}{8\omega_{}}
\end{array}
\right.
\end{align}
As the functions $a_{}$ and $\beta_{}$ do not depend on $T_0$, the 
following relations hold:
\begin{align}
\label{eq:a=alpha...D1D1}
  \frac{da}{dt}=\epsilon D_{1}a+\epsilon^{2}D_{2}a+\mathcal 0\epsilon^{3})\\
 \frac{d\beta}{dt}=\epsilon D_{1}\beta_{} +\epsilon^{2}D_{2}\beta+\mathcal 0\epsilon^{3}).
\end{align}
We are going to express $ \frac{da}{dt},  \frac{d\beta}{dt}$ as
functions of $a, \beta$.
 We manipulate equation  \eqref{eq:d2...theta}
\begin{align*}
\left\{
\begin{array}{rl}
& 2 \omega_{} D_{2}a_{}+  (\lambda a_{} +2D_1 a) ( \sigma + D_{1}\beta)   - a_{}D_{1}^{2}\beta_{} = 0\\
& 2 \omega_{}a_{}D_{2}\beta_{}- \lambda D_{1}a_{}-
D_{1}^{2}a_{}+ a_{} (\sigma + D_{1}\beta)^{2}+ \frac{5 c^{2} a_{}^{3}}{6\omega_{}^2} -\frac{3 d^{2} a_{}^{5}}{128\omega_{}^2} = 0
\end{array}
\right.
\end{align*}
then,  we replace $D_1a_{}, D_1 \beta_{}$ by their expression in
 \eqref{eq:D1a1...gamma},  we get
\begin{align*}
\left\{
\begin{array}{rl}
& 2 \omega_{} D_{2}a_{}-  \frac{F_m \sin(\beta)}{\omega} ( \sigma + D_{1}\beta) - a_{}D_{1}^{2}\beta_{} = 0\\
& -2 \omega_{}a_{}D_{2}\beta_{}- \lambda D_{1}a_{}-
D_{1}^{2}a_{}+ a_{} (\sigma + D_{1}\beta)^{2}+ \frac{5 c^{2} a_{}^{3}}{6\omega_{}^2} -\frac{3 d^{2} a_{}^{5}}{128\omega_{}^2} = 0
\end{array}
\right.
\end{align*}
and
\begin{align}
\label{eq:d2...a-gamma}
\left\{
\begin{array}{rl}
& 2 \omega_{} D_{2}a_{}-  \frac{F_m \sin(\beta)}{\omega_{}} (-\frac{F_m\cos(\beta)}{2 a_{}\omega_{}}+\frac{3da_{}^2}{8\omega_{}} )    - a_{}D_{1}^{2}\beta_{} = 0\\
& -2 \omega_{}a_{}D_{2}\beta_{}- \lambda (-\frac{F_m \sin(\beta)}{2\omega_{}}- \frac{\lambda a_{}}{2})-
D_1^2a + 
a_{} (\frac{F_m\cos(\beta)}{2 a_{}\omega_{}}-\frac{3da_{}^2}{8\omega_{}})^{2}+ 
\frac{5 c^{2} a_{}^{3}}{6\omega_{}^2} -\frac{3 d^{2} a_{}^{5}}{128\omega_{}ç2} = 0.
\end{array}
\right.
\end{align}
On the other hand, we can determine $D_1^{2}a_{}$ and $D_1^{2}\beta $ by differentiating \eqref{eq:D1a1...gamma};
\begin{align*}
&D_1^2a=-\frac{F_m \cos(\beta) D_1 \beta}{2 \omega}-\frac{\lambda D_1
  a}{2}\\
&D_1^2 \beta=\frac{F_m \sin(\beta) D_1 \beta}{2 a
  \omega} 
+\left( \frac{F_m \cos(\beta) }{2 a^2 \omega}   +\frac{3da}{4 \omega}
\right)D_1 a
\end{align*}
or with \eqref{eq:D1a1...gamma}
\begin{align*}
&D_1^2a=-\frac{F_m \cos(\beta) }{2
  \omega} \left (-\sigma-\frac{F_m\cos(\beta)}{2
    a_{}\omega_{}}+\frac{3da_{}^2}{8\omega_{}}  \right) -\frac{\lambda }{2}  \left (-\frac{F_m \sin(\beta)}{2\omega_{}}-
  \frac{\lambda a_{}}{2} \right)\\
&D_1^2 \beta=\frac{F_m \sin(\beta) }{2 a
  \omega}  \left ( -\sigma-\frac{F_m\cos(\beta)}{2 a_{}\omega_{}}+\frac{3da_{}^2}{8\omega_{}}  \right) 
+\left( \frac{F_m \cos(\beta) }{2 a^2 \omega}
+\frac{3da }{4 \omega}
\right) \left (-\frac{F_m \sin(\beta)}{2\omega_{}}- \frac{\lambda a_{}}{2} \right)
\end{align*}
or
\begin{align*}
  &D_1^2 a=\frac{\sigma F_m \cos(\beta)}{2\omega}+\frac{F_m^2
    \cos^2(\beta)}{4 a \omega^2}-\frac{3 d a^2 F_m \cos(\beta)}{16
    \omega^2}+\frac{\lambda F_m \sin(\beta)}{4 \omega}+\frac{\lambda^2
    a}{4}\\
& D_1^2 \beta=-\frac{\sigma F_m \sin(\beta)}{2 a \omega}-\frac{F_m^2
  \sin(\beta) \cos(\beta)}{2 a^2 \omega^2}-\frac{3 d a F_m
  \sin(\beta)}{16 \omega^2} -\frac{\lambda  F_m \cos(\beta)}{4 a \omega}
-\frac{3d \lambda a^2}{8 \omega}.
\end{align*}
Then, in \eqref{eq:d2...a-gamma} we use   previous formula
\begin{align*}
\left\{
\begin{array}{rl}
& 2 \omega_{} D_{2}a_{}- \frac{F_m \sin(\beta)}{\omega_{}}  (-\frac{F_m\cos(\beta)}{2 a_{}\omega_{}}+\frac{3da_{}^2}{8\omega_{}} ) \\
&\qquad \qquad + a_{} \left(  
-\frac{\sigma F_m \sin(\beta)}{2 a \omega}-\frac{F_m^2
  \sin(\beta) \cos(\beta)}{2 a^2 \omega^2}-\frac{3 d a F_m
  \sin(\beta)}{16 \omega^2} -\frac{\lambda  F_m \cos(\beta)}{4 a \omega}
-\frac{3d \lambda a^2}{8 \omega}
\right) = 0\\
&2 \omega_{}a_{}D_{2}\beta_{}-\lambda (-\frac{F_m \sin(\beta)}{2\omega_{}}-\frac{\lambda a_{}}{2})-
\left(\frac{\sigma F_m \cos(\beta)}{2\omega}+\frac{F_m^2
    \cos^2(\beta)}{4 a \omega^2}-\frac{3 d a^2 F_m \cos(\beta)}{16
    \omega^2}+\frac{\lambda F_m \sin(\beta)}{4 \omega}+\frac{\lambda^2
    a}{4}
\right) \\
&\qquad \qquad \qquad \qquad \qquad  \qquad \qquad \qquad  + a_{} (\frac{-F_m\cos(\beta)}{2 a_{}\omega_{}}+\frac{3da_{}^2}{8\omega_{}})^{2}+ 
\frac{5 c^{2} a_{}^{3}}{6\omega_{}^2} -\frac{3 d^{2} a_{}^{5}}{128\omega_{}^2} = 0
\end{array}
\right.
\end{align*}
we manipulate
%
\begin{align*}
\Bigg\{
\begin{array}{rl}
&2\omega D_2 a -\frac{9 d a^2 F_m \sin(\beta)}{16 \omega^{2}}  -\frac{\sigma F_m
  \sin(\beta)}{2 \omega} -\frac{\lambda F_m\cos(\beta)}{4
  \omega}-\frac{3 d \lambda a^3}{8 \omega}=0\\
&2 \omega a D_2 \beta +\frac{\lambda F_m \sin(\beta)}{4
  \omega}+\frac{\lambda^2 a}{4} -\frac{\sigma F_m \cos(\beta)}{2
  \omega}-\frac{3d a^2 F_m \cos(\beta)}{16 \omega^2}-\frac{15 d^{2} a_{}^{5}}{128 \omega_{}^{2}}+ \frac{5
 c^{2}a_{}^{3}}{6\omega_{}^{2}}=0
\end{array}
\end{align*}
 and we obtain:
\begin{align}
\label{eq:D2a1...D2gamma}
\left\{
\begin{array}{rl}
&D_{2}a_{} = \frac{3 d \lambda a_{}^{3}}{16 \omega_{}^{2}} + \frac{\sigma F_m \sin\beta}{4 \omega_{}^{2}}+ \frac{\lambda F_m \cos\beta}{8\omega_{}^{2}}+ \frac{9da_{}^{2}F_m\sin\beta}{32 \omega_{}^{3}}\\
& D_{2}\beta =-\frac{\lambda^{2}}{8\omega_{}}- \frac{15 d^{2} a_{}^{4}}{256 \omega_{}^{3}}- \frac{5
 c^{2}a_{}^{2}}{12\omega_{}^{3}}
 +\frac{\sigma F_m \cos\beta}{4\omega_{}^{2}a_{}}+ \frac{3 d a_{}F_m \cos\beta}{32 \omega_{}^{3}}-\frac{\lambda F_m \sin\beta}{8\omega_{}^{2}a_{}}.
\end{array}
\right.
\end{align}

Now we return to \eqref{eq:a=alpha...D1D1} introducing
\eqref{eq:D1a1...gamma} and \eqref{eq:D2a1...D2gamma}, we obtain:
\begin{align}
\label{eq:dta1...dtgamma}
\left \{
  \begin{array}[h]{rl}
&\frac{da_{}}{dt} = \epsilon \left (-\frac{F_m \sin(\beta)}{2\omega_{}}-
   \frac{\lambda a_{}}{2} \right ) \\
& \qquad  \qquad  \qquad + \epsilon^{2} \left ( \frac{3 d
     \lambda a_{}^{3}}{16 \omega_{}^{2}} + \frac{\sigma F_m
     \sin\beta}{4 \omega_{}^{2}}+ \frac{\lambda F_m
     \cos\beta}{8\omega_{}^{2}}+ \frac{9da_{}^{2}F_m\sin\beta}{32
     \omega_{}^{3}} \right )+O(\epsilon^{3})\\ 
& \frac{d\beta}{dt}= \epsilon \left(-\sigma+\frac{3da_{}^2}{8\omega_{}}
-\frac{F_m\cos(\beta)}{2 a_{}\omega_{}} \right)+ 
\epsilon^{2} \bigg( -\frac{\lambda^{2}}{8\omega_{}}- \frac{15 d^{2}
  a_{}^{4}}{256 \omega_{}^{3}}- \frac{5
  c^{2}a_{}^{2}}{12\omega_{}^{3}}\\ 
& \qquad  \qquad  \qquad  \qquad  \qquad  \qquad +\frac{\sigma F_m
  \cos\beta}{4\omega_{}^{2}a_{}}+ \frac{3 d a_{}F_m \cos\beta}{32
  \omega_{}^{3}}-\frac{\lambda F_m \sin\beta}{8\omega_{}^{2}a_{}}
\bigg)+O(\epsilon^{3})
\end{array}
\right .
\end{align}

\paragraph*{Orientation: amplitude and phase equation.}
Equations \eqref{eq:dta1...dtgamma} ensure that
$S_{3}^{\sharp}$ 
has no term at frequency of $\omega_{1}$ or which goes to $\omega_{1}$ .\\
 This will allow us to justify this expansion in certain conditions; before we need to consider the stationnary solution of the system  \eqref{eq:dta1...dtgamma} and the stability of the solution close to the stationary solution.
This equation  \eqref{eq:dta1...dtgamma} is an extension for triple scale analysis of a similar equation  introduced in a preliminary work with  double scale analysis in \cite{nbb-br-doubl}.
\begin{rem}
  In this approach, we are using {\em the method of reconstitution}; this
  term has been introduced in 1985 in \cite{nayfeh86} in order to resolve a
  discrepancy between higher order approximation solutions obtained by
  multi scales method on the one hand and generalised averaging method on the other hand;
  it has been discussed in \cite{vestroni08} and from the engineering
  point of view, the controversy has
  been resolved in \cite{nayfeh-controv2005}; however the present
  mathematical proof of convergence seems new.
\end{rem}
\begin{rem}
The previous  equations are of importance to derive the solution of the equation \eqref{eq:grosse-cub-lib}; their stationary solution will provide an approximate periodic solution of \eqref{eq:grosse-cub-lib}.
\end{rem}

\subsubsection{Stationnary solution and stability}
\label{sss:stat-stab}
Let us consider the  stationary solution of\eqref{eq:dta1...dtgamma},
it satisfies:
\begin{align}
\label{eq:g1g2=0}
\left \{
  \begin{array}[h]{rl}
 & g_1(a,\beta,\sigma,\epsilon)=0, \\ & g_2(a,\beta,\sigma,\epsilon)=0
  \end{array}
\right .
\end{align}
with
\begin{align}
\label{eq:dta1...dtgamma=0}
\left\{
\begin{array}{rl}
&g_1=  \epsilon (-\frac{F_m \sin(\beta)}{2\omega_{}}- \frac{\lambda
  a_{}}{2} ) +\\
& \qquad    \qquad \epsilon^{2}( \frac{3 d \lambda a_{}^{3}}{16
  \omega_{}^{2}} + \frac{\sigma F_m \sin\beta}{4 \omega_{}^{2}}+
\frac{\lambda F_m \cos\beta}{8\omega_{}^{2}}
+\frac{9da_{}^{2}F_m\sin\beta}{32 \omega_{}^{3}})+\mathcal O(\epsilon^{3})\\ 
&
g_2= \epsilon
(-\sigma+\frac{3da^2}{8\omega_{}}-\frac{F_m\cos(\beta)}{2
  a_{}\omega_{}})\\
&\qquad +\epsilon^{2}( -\frac{\lambda^{2}}{8\omega_{}}- \frac{15 d^{2} a_{}^{4}}{256 \omega_{}^{3}}- \frac{5c^{2}a_{}^{2}}{12\omega_{}^{3}}
+\frac{\sigma F_m \cos\beta}{4\omega_{}^{2}a_{}}+ \frac{3 d a_{}F_m
  \cos\beta}{32 \omega_{}^{3}}-\frac{\lambda F_m
  \sin\beta}{8\omega_{}^{2}a_{}})+ \mathcal O(\epsilon^{3}).
\end{array}
\right.
\end{align}

Now, we study the stability of the solution of  \eqref{eq:dta1...dtgamma=0} 
in a neighbourhood of this  stationary solution noted  $(\bar{a}, \bar{\beta})$;
set  $a_{}=
\bar{a}+ \tilde{a} \text{ and } \beta_{}=\bar{\beta}+\tilde{\beta}$, the linearised  system is written :
$$\binom{\frac{d \tilde a}{dt}}{ \frac{d \tilde \beta}{dt}}=J\binom{\tilde a}{\tilde \beta}$$
with the jacobian matrix
\begin{displaymath}
J=\left(\begin{array}{ll}
 \partial_{\bar{a_{}}}g_{1} & \partial_{\beta}g_{1}  \\ 
   \partial_{\bar{a_{}}}g_{2} & \partial_{\beta}g_{2}
\end{array}\right)
\end{displaymath}
we compute the partial derivatives:
\begin{align*}
  & \partial_{\bar{a_{}}}g_{1}=\epsilon(-\frac{ \lambda }{2})
  + \mathcal O(\epsilon^2) 
& \partial_{\bar{a_{}}}g_{2}=\epsilon \left( \frac{3d \bar a}{4 \omega}
  +\frac{F_m\cos(\beta)}{2 a^2 \omega} \right)+  \mathcal O(\epsilon^2)\\
& \partial_{\beta}g_{1}=-\epsilon \frac{F_m \cos(\beta)}{2\omega} +
\mathcal O(\epsilon^2)
& \partial_{\beta}g_{12}=\epsilon \frac{F_m \sin(\beta)}{2a \omega} +
\mathcal O(\epsilon^2)
\end{align*}
or:
\begin{align*}
& \partial_{\bar{a_{}}}g_{1}=\epsilon(-\frac{ \lambda }{2}) + \mathcal
O(\epsilon^2) 
& \partial_{\bar{a_{}}}g_{2}=\epsilon
(\frac{\sigma}{\bar{a_{}}}+\frac{9 d \bar{a_{}}}{8 \omega_{}})+
\mathcal O(\epsilon^2)\\ 
& \partial_{\gamma}g_{1}=\epsilon (\sigma \bar{a_{}}-\frac{3 d
  \bar{a_{}}^{3}}{8 \omega_{}}) + \mathcal O(\epsilon^2) 
& \partial_{\gamma}g_{2}=\epsilon (-\frac{ \lambda}{2}) + \mathcal O(\epsilon^2)\\
\end{align*}
The matrix trace is 
$ tr(J)=-\lambda \epsilon $ and the  determinant is 
\\
\begin{equation}
 \det(J)=\epsilon^{2}\left[ -\frac{\lambda^{2}}{4}+ \sigma^{2}-\frac{3 d \sigma \bar{a_{}}^{2}}{2 \omega_{}}+ \frac{27 d^{2} \bar{a_{}}^{4}}{64 \omega_{}^{2}}\right] + O(\epsilon^{3})
\end{equation}
the two eigenvalues are negative for $\epsilon$ is small enough; when $$\sigma \le \frac{3 d \bar a^2}{4 \omega} -\frac{1}{2}\sqrt{\frac{9d^{2}\bar a^4}{16 \omega^{2}}-\lambda^2}$$
then the  solution of the  linearised system goes to zero; with the theorem of Poincar\'e-Lyapunov (look in the appendix for  the theorem \ref{th:poinc-lyapu}) when the initial data is close enough to the stationary solution, the solution of the system  \eqref{eq:dta1...dtgamma}, goes to the stationary solution.

\begin{proposition}
\label{prop:stab1ddl}
 When  $$\sigma \le \frac{3 d \bar a^2}{4 \omega}
 -\frac{1}{2}\sqrt{\frac{9d^{2}\bar a^4}{16 \omega^{2}}-\lambda^2}$$
and $\epsilon$ small enough, the stationary solution $(\bar a, \bar
\beta)$ of   \eqref{eq:dta1...dtgamma} is  stable in the sense of
Lyapunov (if the dynamic solution starts  close to the stationary
solution of\eqref{eq:dta1...dtgamma=0},  it remains close to it and
converges to it ); to the  stationary case corresponds the approximate
solution $\tilde{u}_{app}=\epsilon{} u_{app}$ of
\eqref{eq:grosse-cub-forc} 
$$\tilde{u}_{app}=\epsilon \bar a_{} \cos( \tilde \omega_{\epsilon}t + \bar
\beta)+ \epsilon^{2} \left [
\frac{-c\bar{a_{}}^{2}}{2\omega_{}^{2}}+\frac{c\bar{a_{}}^{2}}
{6\omega_{}^{2}}\cos(2( \tilde \omega_{\epsilon}t+  \bar \beta))+
\frac{d\bar{a_{}}^{3}}{32\omega_{}^{2}}\cos(3( \tilde
\omega_{\epsilon}t + \bar \beta )) \right] 
$$
with  
$$  \tilde \omega_{\epsilon}=\omega +\epsilon \sigma$$
It is periodic up to the order two.
\end{proposition}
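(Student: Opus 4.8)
The plan is to establish the two assertions of the proposition separately: first the Lyapunov (indeed asymptotic) stability of the stationary point $(\bar a,\bar\beta)$ of the slow amplitude--phase system \eqref{eq:dta1...dtgamma}, and then the explicit form and periodicity of the associated approximate solution $\tilde u_{app}$.

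For the stability I would regard \eqref{eq:dta1...dtgamma} as an autonomous planar system, its right-hand side $(g_1,g_2)$ depending only on $(a,\beta)$ once $\sigma$ and $\epsilon$ are fixed. Writing $a=\bar a+\tilde a$, $\beta=\bar\beta+\tilde\beta$ and linearising about the equilibrium gives $\frac{d}{dt}\binom{\tilde a}{\tilde\beta}=J\binom{\tilde a}{\tilde\beta}+h(\tilde a,\tilde\beta)$, where $J$ is the Jacobian computed above and $h$ collects the remainder. Since $g_1,g_2$ are smooth (polynomial in $a$, trigonometric in $\beta$), $h$ vanishes to second order at the origin and is locally Lipschitz, so the structural hypotheses of the Poincar\'e--Lyapunov theorem \ref{th:poinc-lyapu} are met; it remains only to check that $J$ is Hurwitz. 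Because $\mathrm{tr}(J)=-\lambda\epsilon<0$ by positivity of the damping, for a $2\times2$ matrix it suffices to have $\det(J)>0$. The leading coefficient of the determinant, $\epsilon^{2}\bigl[-\tfrac{\lambda^{2}}{4}+\sigma^{2}-\tfrac{3d\sigma\bar a^{2}}{2\omega}+\tfrac{27 d^{2}\bar a^{4}}{64\omega^{2}}\bigr]$, is a quadratic in $\sigma$ opening upward, and the hypothesis on $\sigma$ places it at or below the smaller zero of this quadratic, where the bracket is (non-)negative; taking the inequality effectively strict one gets $\det(J)>0$ at order $\epsilon^2$. For $\epsilon$ small enough the $O(\epsilon^{3})$ correction cannot flip this sign, so both eigenvalues of $J$ lie in the open left half-plane and Poincar\'e--Lyapunov yields local asymptotic stability: trajectories of \eqref{eq:dta1...dtgamma} starting near $(\bar a,\bar\beta)$ remain near it and converge to it.

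For the second assertion I would note that at the stationary point $\frac{da}{dt}=\frac{d\beta}{dt}=0$, so $a\equiv\bar a$ and $\beta\equiv\bar\beta$ are genuine constants. Substituting these into the ansatz \eqref{eq:dev-triplech-forc}, with $u^{(1)}=\bar a\cos\theta$ from \eqref{eq:u1forc=}, the expression for $u^{(2)}$ found above, and $\theta=T_0+\bar\beta=\tilde\omega_\epsilon t+\bar\beta$, reproduces the displayed formula for $\tilde u_{app}=\epsilon u^{(1)}+\epsilon^{2}u^{(2)}$ with $\tilde\omega_\epsilon=\omega+\epsilon\sigma$. Since $u^{(1)}$ and $u^{(2)}$ are then trigonometric polynomials in the single variable $\theta=\tilde\omega_\epsilon t+\bar\beta$, each is $2\pi/\tilde\omega_\epsilon$-periodic, hence so is the truncated sum $\tilde u_{app}$; the only non-periodic contribution to the full solution is the discarded remainder $\epsilon^{3}r$, which is precisely the meaning of \emph{periodic up to order two}.

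The step I expect to be the main obstacle is the robustness of the determinant argument: the slow system \eqref{eq:dta1...dtgamma} and its Jacobian are known only through $O(\epsilon^{2})$, so one must argue that the sign of the $\epsilon^{2}$-leading coefficient, together with hyperbolicity (eigenvalues bounded away from the imaginary axis uniformly for small $\epsilon$), genuinely governs the stability of the full system rather than being an artifact of the truncation. This is exactly where the hypothesis ``$\epsilon$ small enough'' is consumed, and it is what licenses the application of theorem \ref{th:poinc-lyapu}; the borderline case of equality in the condition on $\sigma$ (where the leading determinant vanishes) is the delicate sub-case, requiring control of the higher-order term. A secondary point worth recording is the existence of $(\bar a,\bar\beta)$ itself, which follows from the frequency--response relation obtained by eliminating $\beta$ from $g_1=g_2=0$, namely $\lambda^{2}\bar a^{2}\omega^{2}+4\bar a^{2}\omega^{2}\bigl(\tfrac{3d\bar a^{2}}{8\omega}-\sigma\bigr)^{2}=F_m^{2}$ at leading order.
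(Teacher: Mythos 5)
Your proposal follows the same route as the paper's own argument in the subsection preceding the proposition: linearise the slow system \eqref{eq:dta1...dtgamma} at $(\bar a,\bar\beta)$, note $\mathrm{tr}(J)=-\lambda\epsilon<0$ and $\det(J)>0$ under the stated bound on $\sigma$ so that $J$ is Hurwitz, invoke the Poincar\'e--Lyapunov theorem \ref{th:poinc-lyapu}, and then substitute the constant stationary values into the ansatz to obtain the trigonometric-polynomial truncation, periodic in $\theta=\tilde\omega_\epsilon t+\bar\beta$. Your additional remarks --- that strict inequality (hyperbolicity uniform in small $\epsilon$) is what really licenses the conclusion against the $O(\epsilon^3)$ truncation error, and that the existence of $(\bar a,\bar\beta)$ rests on the frequency--response relation --- are points the paper passes over silently, but they do not alter the argument.
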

\begin{rem}
 The expression of $u_{app} $ uses the remark
\begin{align*}
 u^{(1)}&= a_{}~ \cos(T_0 + \beta_{})= a_{}~ \cos(\tilde \omega_{\epsilon} t + \beta)
\end{align*}
and similarly for $u^{(2)}$.
\end{rem}
With this result of stability, we can  state precisely the approximation of the solution of \eqref{eq:grosse-cub-forc}

\subsubsection{Convergence of the expansion}
 
\begin{proposition}
\label{prop:conv-dev-forc}
 Consider the solution $\tilde u=\epsilon u$ of \eqref{eq:grosse-cub-forc} with initial
 conditions
 \begin{align}
   &\tilde u(0)=\epsilon a_{0}\cos(\beta_0) +
\epsilon^{2}[\frac{-ca_{0}^{2}}{2\omega_{}^{2}} +\frac{ca_{0}^{2}}{6\omega_{}^{2}}\cos(2\beta_0)
+\frac{da_{}^{3}}{32 \omega_{}^{2}}]\cos(3\beta_0) + \mathcal O(\epsilon^3),\\
&\dot{\tilde{ u}}(0)=-\epsilon \omega a_0
\sin(\beta_0)+ \epsilon^{2} [
\frac{-ca_{0}^{2}}{2\omega_{}^{2}}\sin(2\beta_0)-\frac{da_{0}^{3}}{32
  \omega_{}^{2}} \sin(3\beta_0)]+ \mathcal O(\epsilon^{3})
 \end{align}
with $(a_0,\beta_0)$ close of the stationary solution $(\bar a_{}, \bar \beta_{});$ 
$$|a_0-\bar a| \le \epsilon^{2} C_1, |\beta -\bar \beta| \le \epsilon^{2} C_1$$
when  $\sigma \le \frac{3 d \bar a^2}{4 \omega}
-\frac{1}{2}\sqrt{\frac{9d^{2}\bar a^4}{16 \omega^{2}}-\lambda^2}$ and
$\epsilon$ small enough, there exists \\ $\varsigma>0$ such that for
all  $t<t_{\epsilon}=\frac{\varsigma}{\epsilon^{2}}$, the following
expansion of $\tilde u=\epsilon u$ is satisfied
\begin{align}
\left\{
\begin{array}{rl}
& \tilde u(t)=\epsilon a_{} (t)\cos(\tilde \omega_{\epsilon} t + \beta(t)) + 
\qquad \qquad    \\
& \qquad \qquad \epsilon^{2}[ \frac{-ca_{}^{2}}{2\omega_{}^{2}}+
~\frac{ca_{}^{2}}{6 \omega_{}^{2}}\cos(2 (\tilde \omega t +
\beta(t)))  + \frac{da_{}^{3}}{32 \omega_{}^{2}}\cos(3 ( \tilde \omega t + \beta(t)))] + \epsilon^3 r(\epsilon,t) \\
 \end{array}
\right.
\end{align}
with $\tilde \omega_{\epsilon}= \omega  +\epsilon \sigma $ and 
  $r$ uniformly bounded  in  $C^{2}(0,t_{\epsilon})$ and with $a_{}, ~\beta_{}$ solution of \eqref {eq:dta1...dtgamma}
\end{proposition}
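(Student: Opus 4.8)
The plan is to reduce the statement to a scalar forced oscillator for the remainder $r$ and then to invoke the boundedness lemma already used for the free case, the genuinely new ingredient being the stability result of Proposition~\ref{prop:stab1ddl}. Proceeding exactly as in the free case that led to \eqref{eq:ddotr=free1ddl}, I would first return to the physical time $t$ in the third line of \eqref{eq:1ddlamort-alpha3-force}. Using the definition of $\mathcal D_3$ rearranged as $\frac{d^2 r}{dt^2}=\omega^2 D_0^2 r+\epsilon\mathcal D_3 r$, together with $\tilde R=R-\mathcal D_3 r$, and having enforced the modulation equations \eqref{eq:dta1...dtgamma} (equivalently the secular conditions \eqref{eq:d1...Famort} and \eqref{eq:d2...theta}), one obtains the remainder equation
\begin{equation*}
\frac{d^2 r}{dt^2}+\omega^2 r = S_3^{\sharp}-\epsilon\tilde R,
\end{equation*}
which is structurally identical to \eqref{eq:ddotr=free1ddl}; the forcing and damping contributions are now folded into $S_3^{\sharp}$ (see \eqref{eq:s3sharptheta}) and into the remainder $\tilde R$.

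The first verification is that $S_3^{\sharp}$ meets the hypotheses of the lemma. By construction, \eqref{eq:s3sharptheta} contains only the harmonics $2\theta,3\theta,4\theta,5\theta$, i.e.\ oscillations at frequencies close to $2\omega,\dots,5\omega$; the fundamental $(\cos\theta,\sin\theta)$ component, the only near-resonant one, has been removed precisely by imposing \eqref{eq:d1...Famort} and \eqref{eq:d2...theta}, so $S_3^{\sharp}$ is orthogonal to the resonant mode. The essential new difficulty, absent in the free case, is that the coefficients of $S_3^{\sharp}$ depend on $a(t),\beta(t)$, which are no longer constant but solve \eqref{eq:dta1...dtgamma}. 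Here I would invoke Proposition~\ref{prop:stab1ddl}: under the stated condition on $\sigma$, for $\epsilon$ small, and since $(a_0,\beta_0)$ lie within $\epsilon^2 C_1$ of the stationary point $(\bar a,\bar\beta)$, the Poincar\'e--Lyapunov theorem guarantees that $(a(t),\beta(t))$ stays in a fixed neighbourhood of $(\bar a,\bar\beta)$ and converges to it for all $t\le t_\epsilon=\varsigma/\epsilon^2$. This yields uniform bounds on $a,\beta$ and on their derivatives entering $S_3^{\sharp}$ and $\tilde R$, so that $S_3^{\sharp}$ is uniformly bounded while $\tilde R$ is a polynomial in $r$ with uniformly bounded, slowly varying coefficients, hence Lipschitz on bounded sets.

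With these two facts in hand I would apply Lemma~\ref{eq:lemmew } to $\frac{d^2 r}{dt^2}+\omega^2 r = S_3^{\sharp}-\epsilon\tilde R$ with $S=S_3^{\sharp}$ and $g=\tilde R$, exactly as in the free-vibration proposition. The lemma then furnishes $\varsigma>0$ such that $r$ remains uniformly bounded in $C^2(0,t_\epsilon)$ with $t_\epsilon=\varsigma/\epsilon^2$; substituting $u^{(1)}=a\cos(\tilde\omega_\epsilon t+\beta)$ and $u^{(2)}$ into the ansatz \eqref{eq:dev-triplech-forc} and multiplying by $\epsilon$ gives the announced expansion of $\tilde u=\epsilon u$, together with the stated initial data.

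The main obstacle is the long-time control on the window $t\le\varsigma/\epsilon^2$, and it forces two points to be reconciled. First, unlike the free case, $S_3^{\sharp}$ is not exactly periodic but only slowly modulated through $a(t),\beta(t)$; the lemma must therefore be read as applying to a bounded source whose resonant component vanishes, and it is the \emph{exact} cancellation produced by \eqref{eq:d2...theta} (not mere smallness) that forbids secular growth at the dangerous rate $1/\epsilon$. Second, the very hypotheses of the lemma --- boundedness of the coefficients over the entire interval --- are available only because the amplitude--phase system is asymptotically stable: the eigenvalues of the Jacobian $J$ are $O(\epsilon)$, so convergence to $(\bar a,\bar\beta)$ takes place on the slow scale $T_1=\epsilon t$ and is effective well before $t_\epsilon$. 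The delicate step is thus the coupling itself, in which stability of \eqref{eq:dta1...dtgamma} feeds uniform coefficient bounds into the lemma, and in which the damping term $\epsilon\lambda\dot{\tilde u}$ is what makes that stability, and hence the whole estimate, survive on the $1/\epsilon^2$ time scale.
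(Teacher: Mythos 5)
Your overall architecture matches the paper's: return to the variable $t$ in the remainder equation, use the stability result of Proposition~\ref{prop:stab1ddl} to control $(a(t),\beta(t))$ on the window $t\le\varsigma/\epsilon^{2}$, and conclude with Lemma~\ref{eq:lemmew } applied to $S$ and a locally Lipschitz $g=\tilde R$. But there is one step where your argument does not close. Lemma~\ref{eq:lemmew } requires $S$ to be a \emph{sum of periodic bounded functions} orthogonal to $e^{\pm it}$ --- its proof relies on exact periodicity to get the auxiliary solution $w_{1}$ bounded on $(0,+\infty)$. The function $S_{3}^{\sharp}$ you feed into the lemma is \emph{not} periodic: its coefficients and its phase $\theta=\tilde\omega_{\epsilon}t+\beta(t)$ involve the time-dependent $a(t),\beta(t)$. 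You notice this ("the lemma must therefore be read as applying to a bounded source whose resonant component vanishes") but you resolve it only by reinterpreting the lemma, which is not a proof; boundedness of the source plus removal of the resonant harmonic is not what the lemma's proof actually uses.

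The missing step --- and the one the paper supplies --- is a decomposition rather than a reinterpretation: introduce $S_{3}^{\natural}$, equal to $S_{3}^{\sharp}$ with $(a,\beta)$ frozen at the stationary values $(\bar a,\bar\beta)$. This $S_{3}^{\natural}$ is exactly periodic and orthogonal to the resonant mode, so it satisfies the lemma's hypothesis as stated. The stability estimate you already invoked gives $|a-\bar a|\le\epsilon^{2}C_{2}$ and $|\beta-\bar\beta|\le\epsilon^{2}C_{1}$ on $t\le\varsigma/\epsilon^{2}$, hence $|S_{3}^{\sharp}-S_{3}^{\natural}|\le\epsilon^{2}C_{3}$, and this difference is absorbed into the perturbation $\epsilon\tilde R$ (it contributes an $O(\epsilon)$ bounded term there, which is harmless for the Lipschitz hypothesis). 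You have every ingredient for this --- in particular the $\epsilon^{2}$-closeness to the stationary point --- but you stop short of assembling them, so as written the application of the lemma is not justified. A minor additional remark: in the forced case the paper takes $\tilde R=R-\mathcal D_{3}r-\lambda\bigl(\tfrac{dr}{dt}-D_{0}r\bigr)$, i.e.\ the damping also leaves a correction in the remainder; your $\tilde R=R-\mathcal D_{3}r$ omits this term, though it does not change the structure of the argument.
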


\begin{proof}
 Indeed after  eliminating
 terms at frequency $\nu_{1}$, we go back to the variable $t$ for  the third equation  \eqref{eq:1ddlamort-alpha3-force}.
\begin{equation*}
 \frac{d ^{2} r}{d t^{2}}+  \omega_{}^2 r= \tilde{S_3}
\end{equation*}
  with\\ 
$\tilde{S_3}=S_3^{\sharp}(t,\epsilon) -  \epsilon  \tilde R(u^{(1)}, u^{(2)},r,\epsilon)$ with $\tilde R=R-\mathcal{D}_{3}r- \lambda(\frac{dr}{dt}- D_0 r)$\\
with all the terms expressed with the variable $t$.
We express $S_2^{\sharp}$ in \eqref{eq:s3sharptheta} by inserting $D_1a, D_1 \beta$ by
their expressions in \eqref{eq:d1...Famort} and using $\theta=\tilde
\omega_{\epsilon} t +\beta$;
this function is {\it} not periodic but is  {\it close} to a
periodic function $S_3^{\sharp}$ by replacing $\beta$ by $\bar \beta$.

 As  the solution of  \eqref{eq:dta1...dtgamma} is stable,  for $t\le \frac{\varsigma}{\epsilon^{2}}$:
\begin{equation*}
  |\beta_{}(\epsilon t,\epsilon^{2} t)-\bar{\beta_{}}|\le \epsilon^{2} C_1, \quad |a_{}(\epsilon t,\epsilon^{2}t) -\bar{a_{}}| \le \epsilon^{2} C_2
\end{equation*}
and
\begin{equation*}
  | S_3^{\sharp}- S_3^{\natural}| \le \epsilon^{2} C_3
\end{equation*}
so this difference may be included in the remainder $\tilde R$.
We use lemma 5.1 of Appendix (already introduced in \cite{nbb-br-doubl}); 
with $S =S_3^{\natural}$;  it  satisfies
lemma  hypothesis; similarly, we use $R=\tilde R$; it satisfies the hypothesis because it is a  polynomial  in the
variables $r,u_1, \epsilon$,with coefficients which are bounded
functions, so it is  lipschitzian on  bounded subsets.\\
\end{proof}

\begin{rem}
  The previous proposition states that for well prepared data close to the stationary solution, the triple  scales approximation converges in the sense that the difference between the solution and its approximation is equal to $\epsilon^3 r$ where $r$ is a function which remains bounded in $C^{2}(0,t_{\epsilon})$   with $t_{\epsilon}= \frac{\gamma}{\epsilon}$,  for some constant $\gamma$, with $\epsilon$ going to $0$.
\end{rem}

\subsubsection{Maximum of the  stationary solution,  primary
    resonance}
\label{subsubsec:maxstatsol}
We consider the stationary solution of \eqref{eq:dta1...dtgamma}, it satisfies,

\begin{align}
\label{eq:g1=0g2=0}
\left \{
  \begin{array}[h]{ll}
 & g_{1}(a,\beta,\sigma, \epsilon)=0, \\
 & g_{2}(a,\beta,\sigma, \epsilon)=0
  \end{array}
\right .
\end{align}
with  formulae \eqref{eq:dta1...dtgamma=0}.
We are going to find an expansion of $a, \beta, \sigma$ with respect to
 the small parameter $\epsilon$ when $\sigma \mapsto a$ reaches a maximum.
 The idea is that the functions $(\sigma, \epsilon) \mapsto (a,\beta)$ are defined implicitly by the
previous equations; the  jacobian matrix is

\begin{gather*}
\left (
  \begin{array}[h]{llll}
  g_{1a},  &g_{1\beta} & g_{1\sigma} & g_{1\epsilon}\\ 
 g_{2a} & g_{2\beta}  & g_{2\sigma}  & g_{2\epsilon} 
  \end{array}
\right )
\end{gather*}

and its  sub matrix $J_{a \beta}$ is:
\begin{gather*}
J(a, \beta)=\left (
  \begin{array}[h]{ll}
  g_{1a}  & g_{1\beta}\\
g_{2a}  & g_{2\beta} 
  \end{array}
\right )
\end{gather*}
 in paragraph \ref{sss:stat-stab},
 we have proved previously that  when $\sigma, \epsilon$ are small
 enough, $J_{a \beta}\neq 0$ and so with the
implicit  function theorem, in a neighbourhood of the
stationary solution, there exists a regular function 
$$ (\sigma, \epsilon) \longmapsto (a,\beta).$$
We first transform \eqref{eq:g1g2=0} 
\eqref{eq:dta1...dtgamma=0} in the following way
\begin{align}
\label{eq:g1+A1=0}
&  g_{1}(a,\beta,\sigma, \epsilon)= (-\frac{F_m \sin(\beta)}{2\omega}- \frac{\lambda
  a}{2} ) +    \qquad \epsilon A_1(a,\beta,\sigma)  + \mathcal
O(\epsilon^{2})=0 \\
\label{eq:g2+A2=0}
&  g_{2}(a,\beta,\sigma, \epsilon)= (-\sigma-\frac{3da^2}{8\omega}-\frac{F_m\cos(\beta)}{2
  a\omega})+ \qquad \epsilon A_2(a,\beta,\sigma) + \mathcal O(\epsilon^{2})=0
\end{align}
with 
\begin{align*}
 & A_1(a,\beta,\sigma)=  \frac{3 d \lambda a^{3}}{16
  \omega_{}^{2}} + \frac{\sigma F_m \sin\beta}{4 \omega_{}^{2}}+
\frac{\lambda F_m \cos\beta}{8\omega_{}^{2}}+
\frac{9da^{2}F_m\sin\beta}{32 \omega_{}^{3}} \\
&A_2(a,\beta,\sigma)= -\frac{\lambda^{2}}{8\omega_{}}- \frac{15 d^{2} a^{4}}{256 \omega_{}^{3}}- \frac{5c^{2}a^{2}}{12\omega_{}^{3}} \nonumber \\ 
& \qquad  \qquad  \qquad \qquad  \qquad+\frac{\sigma F_m \cos\beta}{4\omega_{}^{2}a}+ \frac{3 d aF_m \cos\beta}{32 \omega_{}^{3}}-\frac{\lambda F_m \sin\beta}{8\omega_{}^{2}a}
\end{align*}

\paragraph{ We  derive a first approximation} of $\sin \beta$ and
$\cos\beta$ by neglecting terms of order one in $\epsilon$:
\begin{align}
\label{eq:sin-cos-rough}
\left \{
  \begin{array}[h]{rl}
 &\frac{F_m \sin\beta}{2\omega_{}}=-\frac{\lambda a}{2}+ \mathcal O(\epsilon)\\
  &\frac{F_m \cos\beta}{2\omega_{}}= \frac{3da^{3}}{8\omega_{}}-\sigma a + \mathcal O(\epsilon) 
  \end{array}
\right .
\end{align}
 Using $\frac{d g_1}{d \sigma}=0$, we get
\begin{equation}
  \frac{F_m \cos(\beta)}{2 \omega}\frac{\partial \beta}{\partial
    \sigma}- \frac{\lambda}{2} \frac{\partial a}{\partial \sigma}+
  \epsilon \frac{d A_1}{d \sigma} +\mathcal O(\epsilon)=0
\end{equation}
When $a$ is maximum with respect to $\sigma$, we get another equation $\frac{\partial
  a}{\partial \sigma}=0$; with previous equation, we get a third
equation  $g_3=0$ with
\begin{equation*}
  g_{3}(a,\beta,\sigma, \epsilon)= 
  \frac{F_m \cos(\beta)}{2\omega}\frac{\partial \beta }{\partial
    \sigma}+ \epsilon \frac{d A_1}{d \sigma} +\mathcal O(\epsilon)
\end{equation*}
We have for $\epsilon=0$, $\frac{\partial g_3}{\partial a}=0$,
$\frac{\partial g_3}{\partial \sigma{}}=0$; we denote
$a_0^{\ast},\beta_0^{\ast}, \sigma_0^{\ast}$ the solution of the 3
equations for $\epsilon=0$.

We differentiate \eqref{eq:sin-cos-rough} with respect to $\sigma$; when $\frac{\partial
  a}{\partial \sigma}=0$, we obtain for the first approximation
\begin{gather}
\label{g1+Ag2+Adiff}
\left \{
  \begin{array}[h]{rl}
&\frac{F_m \cos( \beta^{\ast}_0)}{\omega} \frac{\partial
  \beta^{\ast}_0}{\partial \sigma}=0,\\
&-\frac{F_m\sin(\beta^{\ast}_0)}{2 \omega}\frac{\partial
  \beta^{\ast}_0}{\partial \sigma} +a^{\ast}_0=0
\end{array}
\right .
\end{gather}
and so $\cos( \beta^{\ast}_0)=0, \; \sin( \beta^{\ast}_0)= \pm 1$; if
we use  \eqref{eq:g1+A1=0}, we notice that a change of sign of
$\sin( \beta^{\ast}_0)$ changes the sign of $a$; so we choose $ \sin(
\beta^{\ast}_0)= -1$ and $a_0$ has the sign of $F_m$; then with
\eqref{eq:g1+A1=0}, \eqref{eq:g2+A2=0}, the following
equalities hold:
\begin{gather}
   \label{eq:a0s0=} a^{\ast}_0=\frac{ F_m}{\lambda \omega}, \; \sigma^{\ast}_0
=\frac{3da^{\ast 2}_0}{8 \omega}=\frac{3dF_m^2}{8 \lambda^2 \omega^3}; 
\end{gather}
with \eqref{g1+Ag2+Adiff},  we get also $\frac{\partial
  \beta^{\ast}_0}{\partial \sigma}=\frac{2 \omega
  a_0^*}{F_m}=\frac{2}{\lambda}$.
We remark that $c$ is not involved in these formulas.
Then we can compute for $\epsilon=0$,
$\frac{\partial g_3}{\partial a}=0; \;
\frac{\partial g_3}{\partial
  \beta}=
-\frac{F_m\sin(\beta)}{2\omega}
 \frac{\partial   \beta}{ \partial \sigma}
=-\frac{F_m}{\lambda \omega}; \; \frac{\partial g_3}{\partial \sigma}=0$.
So we obtain that the determinant of the extended matrix
\begin{gather*}
J^{\clubsuit}(a, \beta, \sigma)=\left (
  \begin{array}[h]{lll}
  g_{1a}  & g_{1\beta}& g_{1,\sigma}\\
g_{2a}  & g_{2\beta} &g_{2,\sigma} \\
g_{3a}  & g_{3\beta} &g_{3,\sigma} 
  \end{array}
\right )
\end{gather*}
is not zero for $(a_0^*, \beta_0^*, \sigma_0^*)$; so once more, we can use the implicit function theorem to
define differentiable  functions 
$$  \epsilon \longmapsto (a^*,\beta^*, \sigma^*)$$
where  we denote  $a^{\ast},\beta^{\ast}, \sigma^{\ast}$ the
solution of the 3 equations.
\paragraph{After this first approximation, }
 we look for an expansion of these
functions:
$  \epsilon \longmapsto (a^*,\beta^*, \sigma^*)$;
\begin{equation}
  \label{eq:ags_exp}
  a^{\ast}=a^{\ast}_0 +\epsilon a^{\ast}_1 + \mathcal
O(\epsilon^2), \; \beta^{\ast}=  \beta^{\ast}_0+\epsilon
\beta^{\ast}_1+ \mathcal O(\epsilon^2) ,\;
 \sigma^{\ast}= \sigma^{\ast}_0 +  \epsilon \sigma^{\ast}_1
 + \mathcal O(\epsilon^2).
\end{equation}
We perform some preliminary computations of $A_{1,0}^{\ast}=A_1(a_0^*,\beta_0^*,\sigma_0^*), \; A_{2,0}^{\ast}=A_2(a_0^*,\beta_0^*,\sigma_0^*)$;
$$A_{1,0}^{\ast}=  \frac{3 d \lambda a^{\ast3}_0}{16
  \omega_{}^{2}} + \frac{\sigma_0^{*} F_m \sin(\beta^{\ast}_0)}{4 \omega_{}^{2}}
+\frac{9da^{\ast2}_0F_m\sin\beta^{\ast}_0 }{32 \omega_{}^{3}} \label{eq:A1=}$$
$$A_{2,0}^{\ast}= -\frac{\lambda^{2}}{8\omega_{}}- \frac{15 d^{2} a^{\ast 4}_0}{256 \omega_{}^{3}}- \frac{5c^{2}a^{\ast 2}_0}{12\omega_{}^{3}} -\frac{\lambda F_m
 \sin\beta^{\ast}_0}{8\omega_{}^{2} a_0}$$

then, we use the values of \eqref{eq:a0s0=} and we get 
\begin{gather}
\begin{split}
\label{eq:A10A20=}
&  A_{1,0}^{\ast}=-\frac{F_m \sigma_0^*}{2 \omega^2}=-\frac{\lambda a_0^{\ast}\sigma_0^{\ast}}{2 \omega} ,
\quad \quad  \quad  \quad
  \frac{\partial A_{1,0}^{*} }{\partial \sigma}=\frac{F_m
    \sin(\beta^{\ast}_0)}{4 \omega^2} =-\frac{a_0^{*}
    \lambda}{4 \omega}\\
&  A_{2,0}^{\ast}= -\frac{15 d^{2} a^{\ast 4}_0}{256 \omega_{}^{3}}- \frac{5
 c^{2}a^{\ast 2}_0}{12\omega_{}^{3}}=
-\frac{5\sigma_0^{\ast 2} }{12 \omega}-\frac{5 c^2
    a_0^{\ast 2}}{12 \omega^3} , \quad \quad \quad
  \frac{\partial A_{2,0}^{*}}{\partial \sigma}=\frac{-F_m
    \cos(\beta_0^*)}{4\omega^2 a}=0
\end{split}
\end{gather}

\begin{align}
  \frac{\partial A_{1,0}^*}{\partial \beta}&=\frac{\sigma F_m
    \cos(\beta_0^*)}{4 \omega^2}-\frac{\lambda F_m \sin(\beta_0^*)}{8
    \omega^2} +\frac{9 d a^2 F_m \cos(\beta_0^*)}{32 \omega^3}
=\frac{\lambda F_m}{8
    \omega^2}=\frac{\lambda^2 a_0^*}{8 \omega}\\
  \frac{\partial A_{2,0}^*}{\partial \beta}&=
-\frac{\sigma_0^* F_m \sin(\beta_0^*)}{4 \omega^2  a_0^*}-\frac{3da_0^*F_m
\sin(\beta_0^*)}{32 \omega^3} -\frac{\lambda F_m \cos(\beta_0^*)}{8
\omega^2 a}
\\
&\qquad =\frac{\sigma_0^* F_m}{4 \omega^2
    a_0^*}+\frac{3da_0^*F_m}{32 \omega^3}=\frac{\sigma_0^*F_m}{2 \omega^2
    a_0^*}=\frac{ \sigma_0^* \lambda}{ 2\omega};
\end{align}

On the other hand, we notice that $\sin(\beta_0 +\epsilon \beta_1
+ \mathcal O(\epsilon^2))=-1+ \mathcal O(\epsilon^2)$ and with \eqref{eq:a0s0=}, 
we expand  formula \eqref{eq:g1+A1=0} to obtain
at second order
$$\frac{\lambda a_1^*}{2}=A_{1,0}^*= -\frac{\lambda a_0^* \sigma_0^*}{2 \omega}$$
and therefore 
\begin{equation}
  \label{eq:a1=}
a_1^*=-\frac{ a_0^* \sigma_0^*}{ \omega} 
\end{equation}

We compute 
\begin{align}
  \frac{\partial g_{1,0}^*}{\partial \sigma}&=\epsilon \frac{\partial
    A_{1,0}^*}{\partial \sigma} +\mathcal O (\epsilon^2) =-\epsilon  \frac{\lambda a_0}{4
    \omega} +\mathcal  \mathcal O (\epsilon^2)\\
 \frac{\partial g_{1,0}^*}{\partial \beta}&= \frac{F_m
   \cos(\beta)}{2 \omega} +\epsilon  \frac{\partial
    A_{1,0}^*}{\partial \beta} +\mathcal  \mathcal O(\epsilon^2)=-\epsilon \frac{F_m
   \beta_1^*}{2 \omega} -\epsilon\frac{\lambda^2 a_0^*}{8 \omega} +\mathcal O(\epsilon^2);
\end{align}
where we have used  $\cos(\beta_0+\epsilon \beta_1 +  \mathcal O(\epsilon^2))=- \epsilon
\beta_1 + \mathcal O(\epsilon^2)$ and $\frac{\partial a}{\partial \sigma}=0$
\begin{equation}
  \begin{split}
    \frac{d g_1}{d \sigma}&= \frac{\partial g_{1,0}^*}{\partial
      \sigma}+\frac{\partial g_{1,0}^*}{\partial \beta} 
    \frac{\partial \beta}{\partial \sigma} 
+\frac{\partial g_{1,0}^*}{\partial \beta} 
    \frac{\partial a}{\partial a} +\mathcal O(\epsilon^2) \\
&=\epsilon \left [ -\frac{\lambda a_0^*}{4  \omega} +\left (-\frac{F_m
   \beta_1^*}{2 \omega} -\frac{\lambda^2 a_0^*}{8 \omega}
\right) \left (-\frac{2}{\lambda} \right ) \right ]  +\mathcal O(\epsilon^2)\\
&= -\epsilon a_0^* ( \beta_1^*+\frac{\lambda}{2 \omega} ) +\mathcal O(\epsilon^2)
  \end{split}
\end{equation}
as $ \frac{d g_1}{d \sigma}=0$, we get

\begin{equation}
  \label{eq:gamma1=}
  \beta_1^*=-\frac{\lambda}{2 \omega}.
\end{equation}

 We use these approximations in the second equation
 \eqref{eq:g2+A2=0} to obtain
 \begin{equation}
   -(\sigma_0^*+\epsilon \sigma_1^*) +\frac{3da_0^{*2}}{8
     \omega}+6\epsilon \frac{da_0^*a_1^*}{8 \omega}
+\frac{F_m \beta_1^*}{2 a_0^* \omega}+\epsilon A_{2,0}^*+\mathcal O(\epsilon^2)=0
 \end{equation}
and hence
\begin{align}
  \sigma_1^*&= \frac{3da_0^*a_1^*}{4\omega}+\frac{F_m \beta_1^*}{2 a_0^*
    \omega}+A_{2,0}^*\\
&=\frac{3da_0^*}{4\omega}\left ( \frac{-a_0^*
    \sigma_0^*}{\omega}\right) +\frac{F_m }{2 a_0^*\omega} \left(
  \frac{-\lambda}{2 \omega}\right) +A_{2,0}^* \nonumber\\
&=-2 \frac{\sigma_0^2}{\omega}-\frac{\lambda^2}{4 \omega}
+A_{2,0}^*  \nonumber\\
&=-29\frac{\sigma_0^{*2}}{12 \omega}-\frac{5c^2 a_0^2}{12 \omega^3} -\frac{\lambda^2}{4 \omega}
\end{align}
\par{\it We can check the computations by using another way, see
  Appendix in subsection \ref{sec:anotherway} }
We remark that we get a frequency slightly different of  the free vibration frequency associated to the same amplitude.

\paragraph*{We have obtained the following important result.}
\begin{proposition}

  The stationary solution of \eqref{eq:dta1...dtgamma}
satisfies 
\begin{align}
\label{eq:g1+A_g2+A=0}
\left\{
\begin{array}{rl}
&  (\frac{F_m \sin(\beta)}{2\omega_{}}- \frac{\lambda
  a}{2} ) +    \qquad \epsilon A_1(a,\beta,\sigma)  + \mathcal O(\epsilon^{2})=0\\
&  (\sigma-\frac{3da^2}{8\omega_{}}+\frac{F_m\cos(\beta)}{2
  a\omega_{}})+ \qquad \epsilon A_2(a,\beta,\sigma) + \mathcal O(\epsilon^{2})=0
\end{array}
\right.
\end{align}
with 
\begin{align*}
 & A_1(a,\beta,\sigma)=  \frac{3 d \lambda a^{3}}{16
  \omega_{}^{2}} + \frac{\sigma F_m \sin\beta}{4 \omega_{}^{2}}+
\frac{\lambda F_m \cos\beta}{8\omega_{}^{2}}+
\frac{9da_{1}^{2}F_m\sin\beta}{32 \omega_{}^{3}} \\
&A_2(a,\beta,\sigma)= -\frac{\lambda^{2}}{8\omega_{}}- \frac{15 d^{2} a^{4}}{256 \omega_{}^{3}}- \frac{5c^{2}a_{1}^{2}}{12\omega_{}^{3}}  
 +\frac{\sigma F_m \cos\beta}{4\omega_{}^{2}a_{1}}+c \frac{3 d aF_m \cos\beta}{32 \omega_{}^{3}}-\frac{\lambda F_m \sin\beta}{8\omega_{}^{2}a_{1}}
\end{align*}

this stationary solution reaches its maximum amplitude for $\sigma=\sigma_0^*+
\epsilon \sigma_1^* + \mathcal O(\epsilon^2 ), \; a^*=a_0^*+\epsilon a_1^*
+\mathcal O(\epsilon^2), \; \beta^*=\beta_0^*+\epsilon \beta_1^* +\mathcal O(\epsilon^2)$
with 
\begin{gather}
    a^{\ast}_0=\frac{ F_m}{\lambda \omega}, \; \sigma^{\ast}_0
=\frac{3da^{\ast 2}_0}{8 \omega}=\frac{3F_m ^2}{8 \lambda^2 \omega^3} ,
\quad \beta_0^*=-\frac{\pi}{2}
\end{gather}
and
$$ \sigma_1^*=
-\frac{29}{12 \omega}\sigma_0^{*2}
- \frac{5c^{2}a_{0}^{* 2}}{12\omega_{}^{3}} -\frac{\lambda^2}{4 \omega}= -\frac{87 d^2 a_0^4}{256  \omega^3}- \frac{5c^{2}a_{0}^{* 2}}{12\omega_{}^{3}} -\frac{\lambda^2}{4 \omega} , \quad \beta_1^*= \frac{-\lambda}{2  \omega},
 \quad a_1^*=-\frac{ a_0^* \sigma_0^*}{ \omega}$$
 the periodic forcing  is at the angular frequency 
$$\tilde \omega_{\epsilon}= \omega_{}+\epsilon \sigma_0^*
+\epsilon^2 \sigma_1^* + \mathcal
O(\epsilon^2)
$$
it is slightly different of the approximate
angular frequency $\nu_{\epsilon}$ of  the undamped free periodic
solution associated to the same amplitude. 
\eqref{eq:nualpha};
for this frequency, the approximation (of the  solution $\tilde
u=\epsilon u$ of \eqref{eq:grosse-cub-forc} up to the
order $\epsilon^2$)  is periodic:
\begin{align}
\left\{
\begin{array}{rl}
&\tilde u(t)=\epsilon  a^* \cos( \tilde \omega_{\epsilon}t +
 \beta^* t) \\
& \quad \quad \quad + \epsilon^{2}[
\frac{-c{a}_{}^{*2}}{2\omega_{}^{2}}+\frac{c{a}_{}^{*2}}
{6\omega_{}^{2}}\cos(2 (\tilde \omega_{\epsilon}t +  \beta^* ))+
\frac{d{a}_{}^{*3}}{32\omega_{}^{2}}\cos(3( \tilde
\omega_{\epsilon} t +  \beta^* ) )] + \epsilon^3 r(\epsilon,t)\\
&\tilde u(0)=\epsilon {a}_{}^* + \epsilon^{2}[\frac{-c{a }_{}^{*2}}{3\omega_{}^{2}} +
\frac{d {a}_{}^{*3}}{32 \omega_{}^{2}}] + \mathcal O(\epsilon^3),
\quad \dot{u}(0)= \mathcal O(\epsilon^3)
 \end{array}
\right.
\end{align}
with r bounded in $C^{2}(0,t_{\epsilon})$
\end{proposition}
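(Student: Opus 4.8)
The plan is to locate the amplitude maximizer of the response curve $\sigma \mapsto a$ by a two-stage implicit-function-theorem argument, and then to extract its $\epsilon$-expansion order by order. First I would record that the stationary solution is characterized by $g_1 = g_2 = 0$ in the form \eqref{eq:g1+A1=0}--\eqref{eq:g2+A2=0}, and that the submatrix $J(a,\beta)$ of the Jacobian is invertible for $\sigma,\epsilon$ small. This is exactly what was established in paragraph \ref{sss:stat-stab}, where $\det J = \epsilon^2\left[-\tfrac{\lambda^2}{4}+\sigma^2-\cdots\right]+O(\epsilon^3)\neq 0$. The implicit function theorem therefore furnishes a smooth local map $(\sigma,\epsilon)\mapsto\bigl(a(\sigma,\epsilon),\beta(\sigma,\epsilon)\bigr)$ solving the stationary equations, so that differentiating $g_1=0$ along this branch makes sense.

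Next, to pin down the maximum of the amplitude I would impose the stationarity condition $\partial a/\partial\sigma = 0$ together with the differentiated relation $dg_1/d\sigma = 0$; eliminating $\partial a/\partial\sigma$ between them produces the third scalar equation $g_3 = 0$. At $\epsilon=0$, differentiating the leading-order relations \eqref{eq:sin-cos-rough} and inserting $\partial a/\partial\sigma=0$ forces $\cos\beta_0^*=0$; the requirement that $a$ keep the sign of $F_m$ then selects $\sin\beta_0^*=-1$, i.e. $\beta_0^*=-\tfrac{\pi}{2}$, after which \eqref{eq:g1+A1=0}--\eqref{eq:g2+A2=0} give $a_0^*=F_m/(\lambda\omega)$ and $\sigma_0^*=3d\,a_0^{*2}/(8\omega)$, together with $\partial\beta_0^*/\partial\sigma = 2/\lambda$.

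The crucial step, and the one I expect to be the main obstacle, is to verify that the extended $3\times 3$ matrix $J^{\clubsuit}(a,\beta,\sigma)$ is nonsingular at $(a_0^*,\beta_0^*,\sigma_0^*)$, so that the implicit function theorem may be invoked a \emph{second} time to produce a genuinely smooth branch $\epsilon\mapsto(a^*,\beta^*,\sigma^*)$ of maximizers. The delicacy is that at $\epsilon=0$ several of the needed partial derivatives vanish, namely $\partial_a g_3 = 0$ and $\partial_\sigma g_3 = 0$, so the determinant cannot be read off from a diagonal; instead one expands $\det J^{\clubsuit}$ along the third row and is left with the single nonzero entry $\partial_\beta g_3 = -F_m/(\lambda\omega)$ multiplied by the $2\times 2$ minor $g_{1a}g_{2\sigma}-g_{1\sigma}g_{2a}=\lambda/2\neq 0$, yielding $\det J^{\clubsuit}=F_m/(2\omega)\neq 0$. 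Confirming this requires carefully tracking which derivatives survive at leading order rather than trusting the naive leading term, which is where the bookkeeping is genuinely error-prone.

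Finally, with the branch in hand I would substitute the expansions \eqref{eq:ags_exp} into \eqref{eq:g1+A1=0}--\eqref{eq:g2+A2=0} and match powers of $\epsilon$ using $\sin\beta^*=-1+O(\epsilon^2)$ and $\cos\beta^*=-\epsilon\beta_1^*+O(\epsilon^2)$: the second-order part of $g_1$ gives $a_1^*=-a_0^*\sigma_0^*/\omega$; the relation $dg_1/d\sigma=0$ at next order gives $\beta_1^*=-\lambda/(2\omega)$; and the second-order part of $g_2$ delivers $\sigma_1^*$, which after inserting $a_1^*,\beta_1^*$ and the precomputed values of $A_{1,0}^*,A_{2,0}^*$ and their $\beta$- and $\sigma$-derivatives from \eqref{eq:A10A20=} collapses to $-\tfrac{29}{12\omega}\sigma_0^{*2}-\tfrac{5c^2 a_0^{*2}}{12\omega^3}-\tfrac{\lambda^2}{4\omega}$. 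The periodicity of the corresponding $\tilde u$ up to order $\epsilon^2$ is then immediate from Proposition \ref{prop:conv-dev-forc}: at the stationary point $a$ and $\beta$ are constants, so every harmonic appearing in the expansion oscillates at an integer multiple of the single frequency $\tilde\omega_\epsilon=\omega+\epsilon\sigma$, and the residual is the $\epsilon^3 r$ term bounded in $C^2(0,t_\epsilon)$.
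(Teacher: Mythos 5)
Your proposal follows the paper's own argument essentially step for step: the same two applications of the implicit function theorem (first in $(a,\beta)$ via the invertibility of $J(a,\beta)$ from \S\ref{sss:stat-stab}, then for the extended system $g_1=g_2=g_3=0$ in $(a,\beta,\sigma)$), the same leading-order identification $\cos\beta_0^*=0$, $\sin\beta_0^*=-1$ giving $a_0^*$, $\sigma_0^*$, $\partial\beta_0^*/\partial\sigma=2/\lambda$, and the same order-by-order matching yielding $a_1^*$, $\beta_1^*$, $\sigma_1^*$. The only detail you add is the explicit third-row expansion of $\det J^{\clubsuit}$, which the paper merely asserts to be nonzero; your value is consistent with the paper's computed entries.
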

\begin{rem}
  We remark that, for $\epsilon$ small enough, this value of $\sigma^*$ is
  indeed smaller than  the maximal value that  $\sigma$ may reach
  in order that the previous expansion  converges as
indicated in  proposition \ref{prop:conv-dev-forc}.
\end{rem}
\begin{rem}
  We have obtained an expansion of $\tilde \omega_{\epsilon}$ up to
  order $\epsilon^2$ to be compared with the expansion with a double
  scale analysis (see in
  \cite{nbb-br-doubl}); in particular the amplitude dependence on the
  frequency of the applied force depends on the ratio of c and d; see
  numerical results below.

We have justified the basic behaviour of a primary resonance; many
other phenomena may appear like subharmonic resonances, see for
example  \cite{nayfeh86}.
\end{rem}
\begin{figure}[h]
\includegraphics[height=7cm,width=9cm]{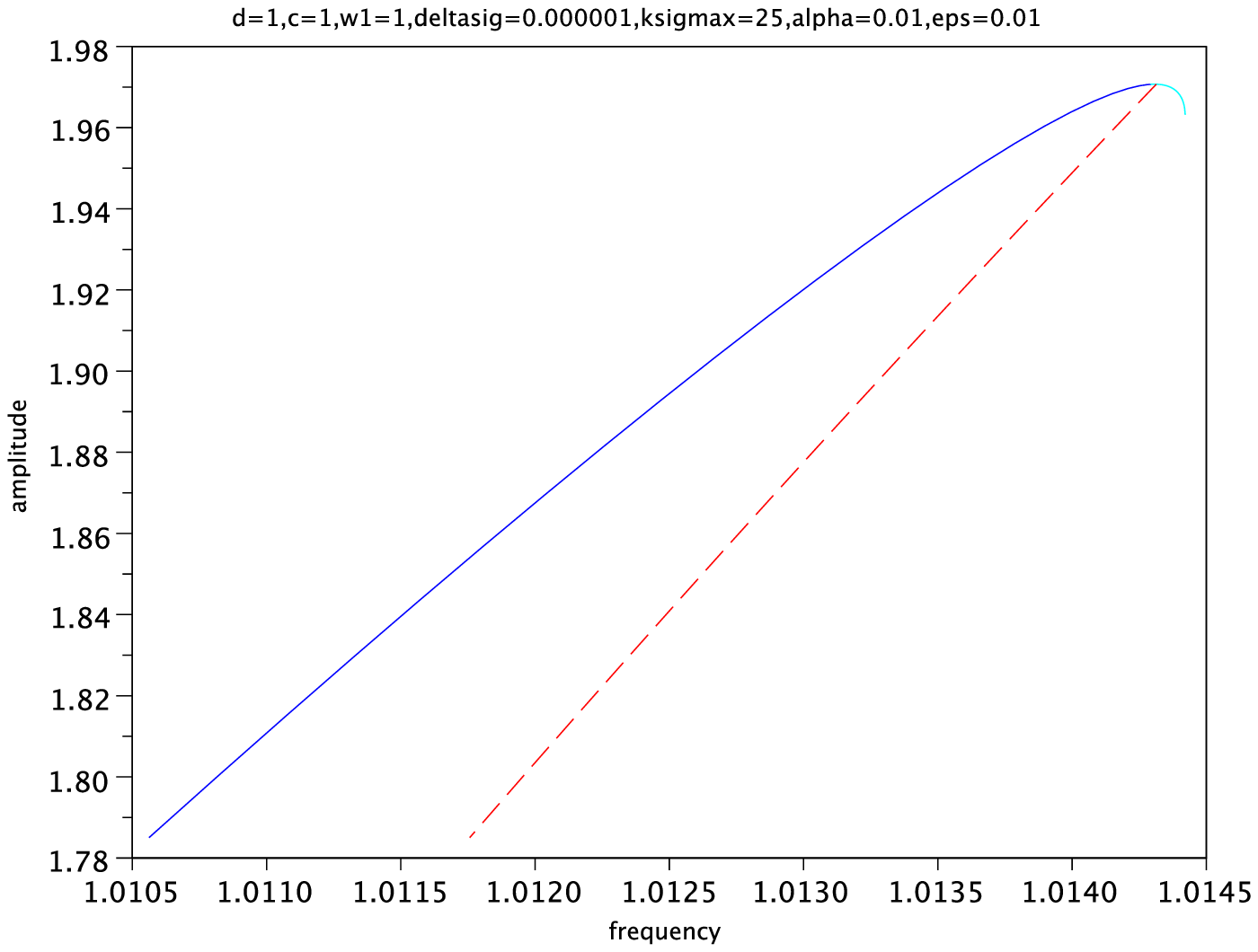}
\includegraphics[height=7cm,width=7cm]{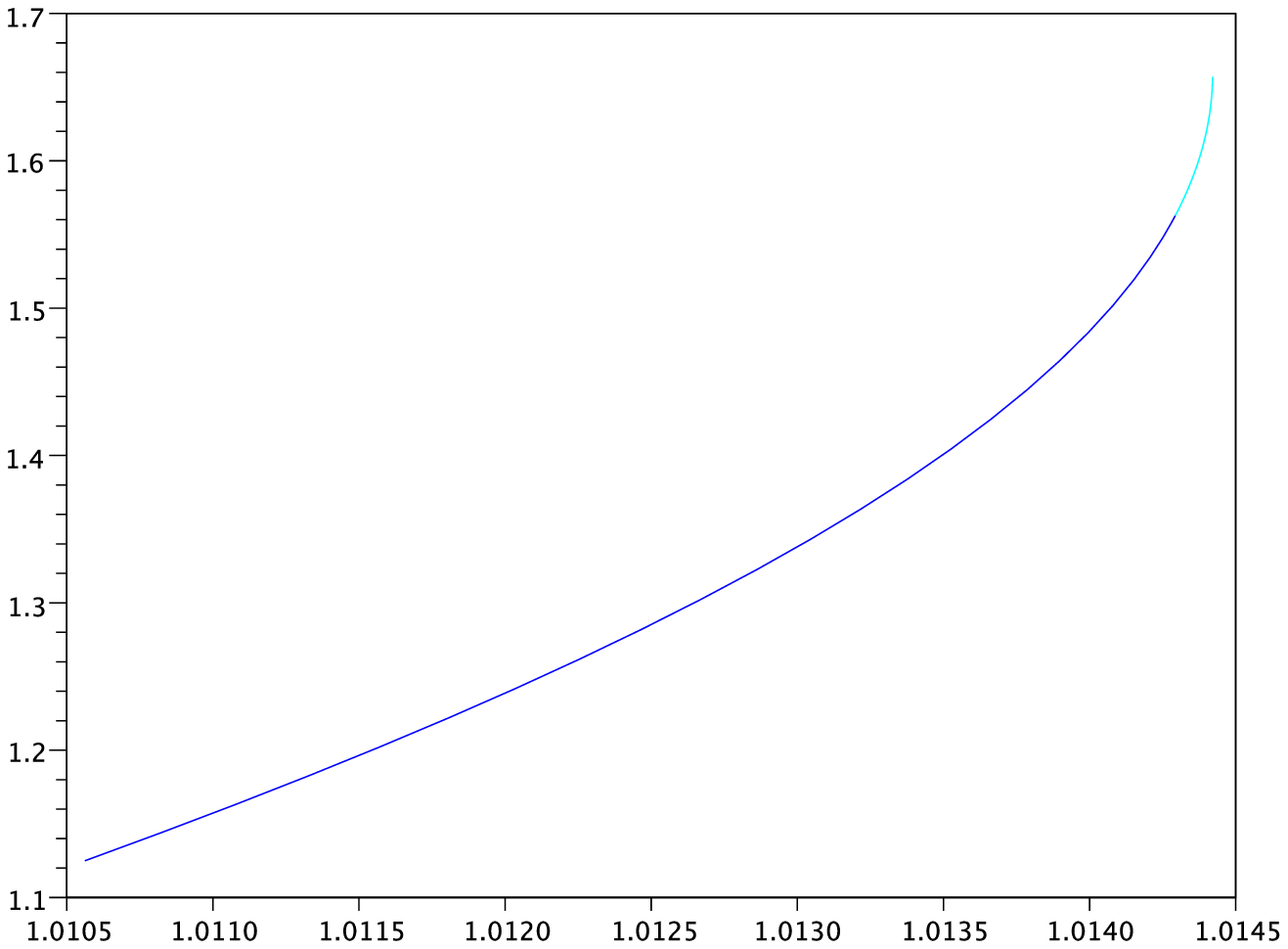}
\caption{ Left: amplitude versus frequency of stationary forced
  solution in blue and magenta; amplitude of free solution in
  red. Right: phase versus frequency of stationary forced solution}
\label{fig:ampligamm-freq1ddlc1}
\end{figure}
\begin{figure}[h]
\includegraphics[height=7cm,width=9cm]{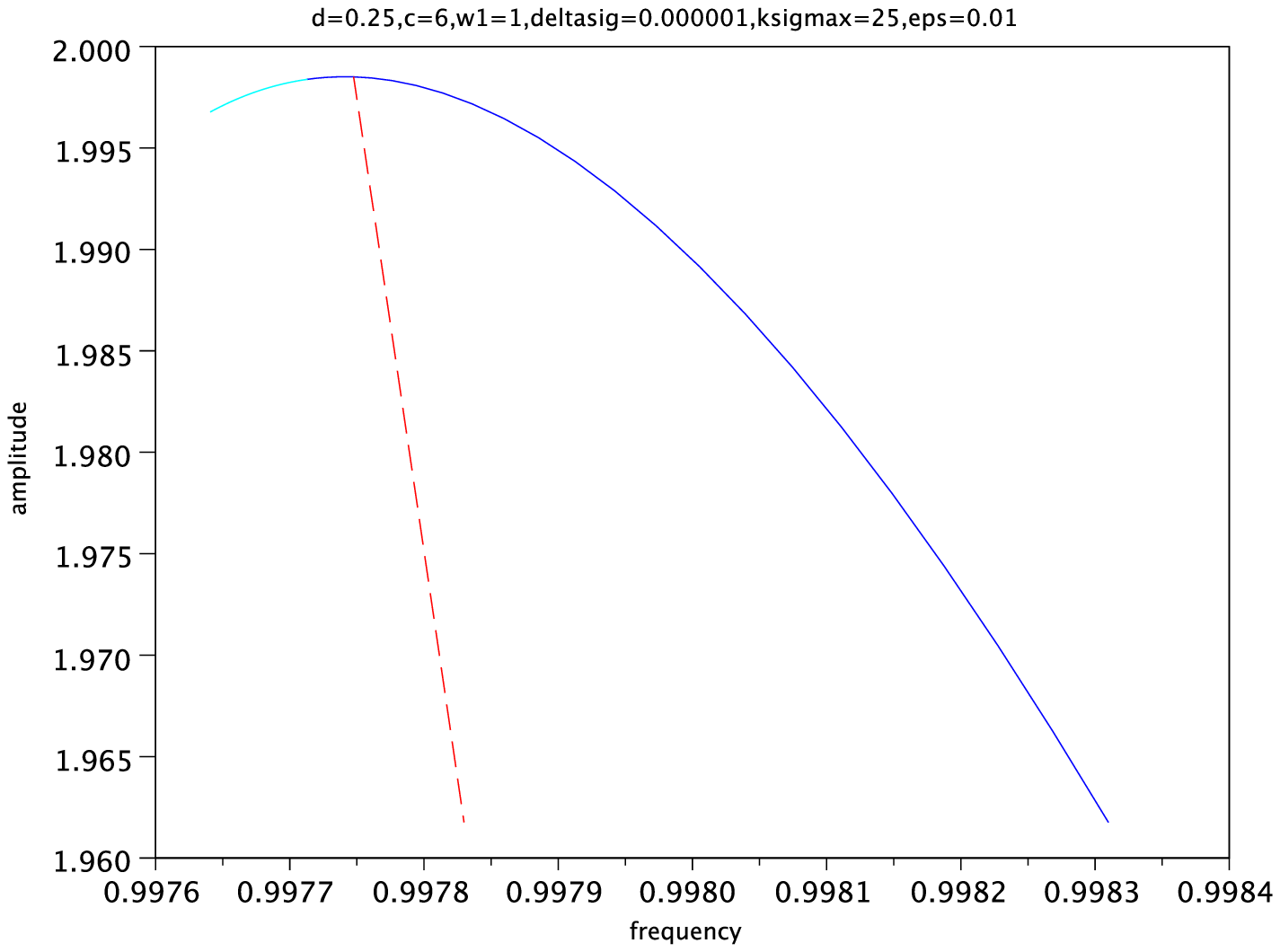}
\includegraphics[height=7cm,width=7cm]{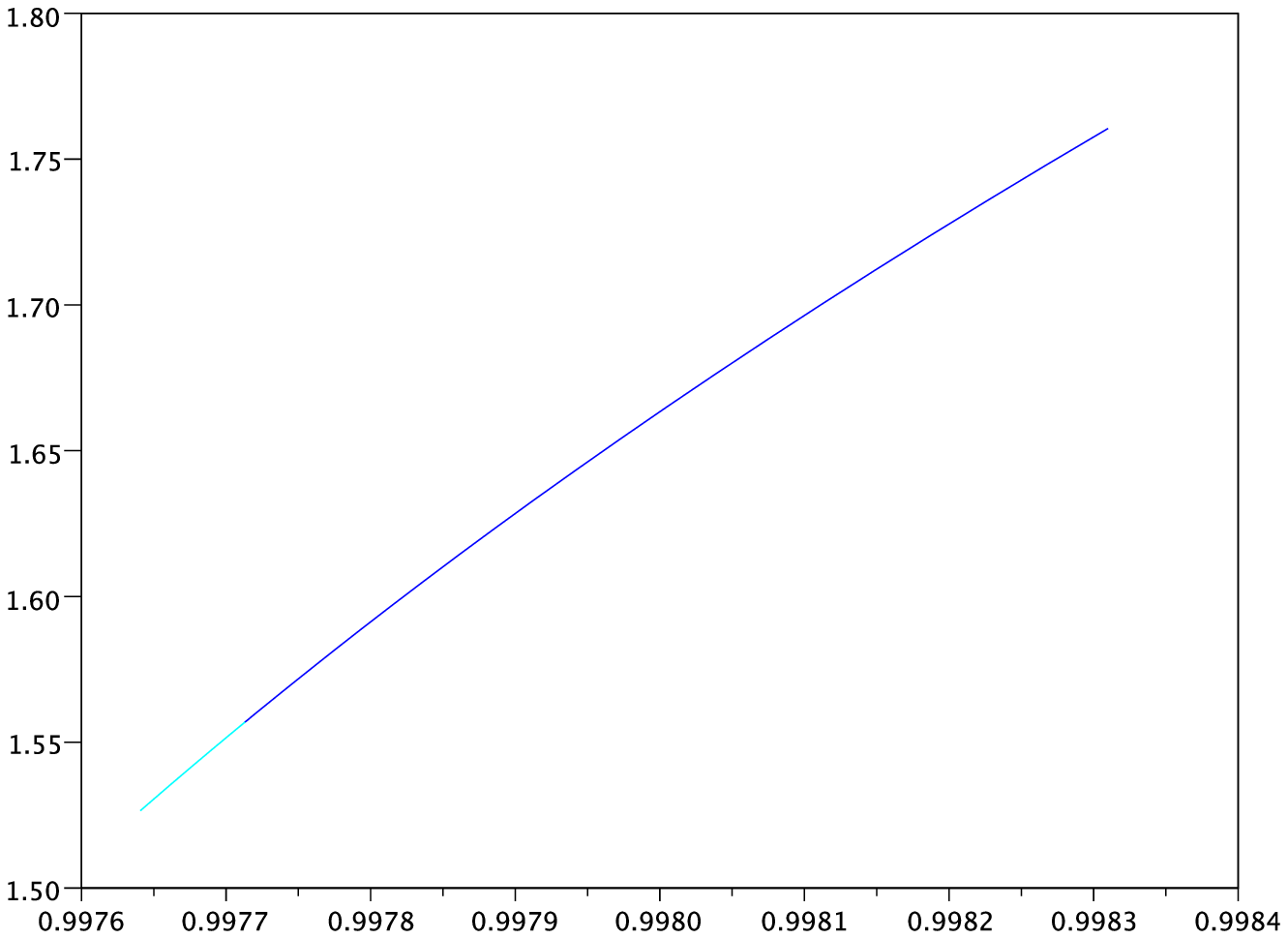}
\caption{ Left: amplitude versus frequency of stationary forced
  solution in blue and magenta; amplitude of free solution in red. Right: phase versus frequency of stationary forced solution }
\label{fig:ampligamm-freq1ddld025c6}
\end{figure}
In figure \ref{fig:ampligamm-freq1ddlc1}, we use 
 $\epsilon=0.01, \lambda=1/2, c=1, d=1, \omega=1, F=1$.
On the left,
the solid line displays the amplitude of the solution of this equation
with respect to
values of the frequency; we have solved \eqref{eq:g1=0g2=0}  with the routine {\sc fsolve} of
Scilab; it implements a variant of the hybrid method of Powell. In
proposition \ref{prop:stab1ddl}, the solution is stable when sigma is
small enough; the routine {\sc fsolve} fails to solve the equation when
$\sigma$ is too large; then we have exchanged the use of $\sigma$ and
$a$. The dotted line plots the amplitude of the free solution with
respect to its frequency. On the right, the phase $\gamma=-\beta$ is plotted
with respect to the frequency; it is also obtained by solving \eqref{eq:g1=0g2=0}  with the routine {\sc fsolve}.

In figure \ref{fig:ampligamm-freq1ddld025c6}, we use 
 $\epsilon=0.01, \lambda=1/2, c=6, d=1/4, \omega=1, F=1$.
On the left the solid line displays the amplitude of the solution with
respect to values of the frequency; on the right the phase $\gamma$ is
plotted. We notice that the behaviour is quite different of the
previous plots. 
\begin{rem}
  We emphasise that the behaviour of the last plots is linked to the
  ration of $c$ and $d$; this type of behaviour cannot be obtained with
  double scale expansion ; see \cite{nbb-br-doubl}.
\end{rem}

\section{System with  local quadratic and cubic non linearity}
\subsection{Free vibrations, triple scale expansion up to second order}
\label{subsec:syst-free-vib}
We consider a system of several vibrating masses attached to springs:

  \begin{equation}
\label{eq:syst}
 M \ddot {\tilde u} + K \, \tilde u + \Phi(\tilde u,\epsilon) = 0
\end{equation}
The mass matrix $M$ and the rigidity matrix $K$ are assumed to be symmetric and positive definite.
We assume that the non linearity is local, all components are zero except for two components $p-1, \;p$ which correspond to the end points of some spring assumed to be non linear:
\begin{equation}
  \label{eq:Phi=}
  \Phi_{p-1}(\tilde u)=c(\tilde u_p-\tilde u_{p-1})^2
  +\frac{d}{\epsilon}(\tilde u_p-\tilde u_{p-1})^3, \; \Phi_p=- \Phi_{p-1}
\end{equation}
In order to get an approximate solution, we are going to display the equation in the generalised eigenvector basis:
\begin{gather}
\label{eq:vect-val-propre}
  K \phi_k= \omega_k^2 M \phi_k, \text{ with } \phi_k^T ~M ~\phi_l= \delta \phi_{kl}, ~~~ k,l=1 \dots, n
\end{gather}
So we perform the change of functions:
\begin{equation}
\label{eq:uyphi}
 \tilde u=\sum_{k=1}^{n} \tilde y_{k}\phi_{k};~K\tilde
 u=\sum_{k=1}^{n} \tilde y_{k}K\phi_{k}=\sum_{k=1}^{n} \tilde y_{k}\omega_{k}^{2} M \phi_{k};~ 
M \ddot{\tilde u}=\sum_{k=1}^{n}  \ddot{\tilde y}_{k} M\phi_{k}
\end{equation}
we obtain
\begin{equation*}
  \ddot{\tilde y}_k +\omega_k^2 \tilde y_k+\phi_k^T \Phi(\sum_{i=1}^n
  \tilde y_i \phi_i,\epsilon)=0, ~~~ k=1 \dots, n
\end{equation*}
As $\Phi$ has only 2 components which are not zero, it can be written
\begin{equation*}
\label{eq:yk..Phip}
  \ddot{\tilde y}_k +\omega_k^2 \tilde y_k+\left (
    \phi_{k,p-1}-\phi_{k,p}\right )\Phi_{p-1}(\sum_{i=1}^n \tilde y_i \phi_i,\epsilon)=0,  ~~~ k=1 \dots, n
\end{equation*}
or more precisely
\begin{multline}
\label{eq:yktilde..Phipcd}
  \ddot{\tilde y}_k +\omega_k^2 \tilde y_k+\left (
    \phi_{k,p-1}-\phi_{k,p}\right ) \Bigg[c \left (\sum_{i=1}^n \tilde
    y_i (\phi_{i,p}-\phi_{i,p-1}) \right)^2 +\\
 \frac{d}{\epsilon} \left (\sum_{i=1}^n \tilde y_i( \phi_{i,p}-\phi_{i,p-1}) \right)^3 \Bigg]=0,  ~~~ k=1 \dots, n
\end{multline}
\begin{rem}
  As we intend to look for a small solution, we consider a change of
  function \fbox{ $\tilde y_k=\epsilon y_k$} and we obtain the transformed equation:
\end{rem}
\begin{multline}
\label{eq:yk..Phipcd}
  \ddot y_k +\omega_k^2 y_k+\left ( \phi_{k,p-1}-\phi_{k,p}\right )
  \Bigg[\epsilon c \left (\sum_{i=1}^n y_i (\phi_{i,p}-\phi_{i,p-1}) \right)^2 +\\
 \epsilon d \left (\sum_{i=1}^n y_i( \phi_{i,p}-\phi_{i,p-1}) \right)^3 \Bigg]=0,  ~~~ k=1 \dots, n
\end{multline}

\subsubsection{ Derivation of  an asymptotic expansion}
As for the 1 degree of freedom case, we use a triple  scale expansion to compute
an approximate  small solution; more precisely, {\it we look for a solution
close to a normal mode of the associated linear system}; we denote
this mode by subscript $\omega_{1}$; obviously by permuting the coordinates,
this subscript could be anyone (different of  $p$, this case would
give similar results with slightly different formulae); we set
\begin{equation}
  T_0= \omega_1 t, \quad T_1=\epsilon t , \quad T_2=\epsilon^{2} t  ~ \text{hence
     } \quad  D_{0}y_k= \frac{\partial y_k}{\partial T_{0}},~  D_{1}y_k=
  \frac{\partial y_k}{\partial T_{1}} \text{  and }  D_{2}y_k= \frac{\partial y_k}{\partial T_{2}}
\end{equation}
and we use the {\it ansatz}:
\begin{equation}
\label{eq:yk=alpha...r}
y_{k}(t)= y_{k}(T_0, T_{1},T_{2}) =  y_{k}^{(1)}(T_0, T_{1},T_{2}) + \epsilon^{}y_{k}^{(2)}(T_0, T_{1},T_{2})+\epsilon ^{2} r_{k}(T_{0}, T_{1},T_{2})
\end{equation} 

So we have:
\begin{align}
\begin{split}
\label{eq:yk2=eps...r}
 \frac{d^2y_{k}}{dt^2}&= \omega_1^2 D_{0}^{2}y_{k}^{(1)}+
 \epsilon^{}\left[ 2 \omega_1 D_{0}D_{1}y_{k}^{(1)} + D_{0}^{2}y_{k}^{(2)}\right] \\
&\quad\quad +\epsilon^{2}\left[ 2\omega_1 D_{0}D_{2}y_{k}^{(1)} +
  D_{1}^{2}y_{k}^{(1)}+2\omega_1 D_{0}D_{1}y_{k}^{(2)}+
  D_{0}^{2}r\right]\\ &\quad \quad \quad
+\epsilon^{3}\left[2D_{1}D_{2}y_{k}^{(1)}+2 \omega_1 D_{0}D_{2}y_{k}^{(2)}+D_{1}^{2}y_{k}^{(2)}   +  \mathcal{D}_3 r_{k}  \right]\\
&\quad \quad \quad \quad  +\epsilon^{4}\left[  D_{2}^{2}y_{k}^{(1)}+ 2D_{1}D_{2}y_{k}^{(2)}+ \epsilon D_{2}^{2}y_{k}^{(2)}\right] 
\end{split}
\end{align}
with
\begin{equation*}
  \mathcal{D}_3 r_{k} = \frac{1}{\epsilon}\left( \frac{d^2 r_{k}}{d
      t^2} -\omega_1^2 D_0^2 r_{k} \right)= 2 \omega_1 D_0D_1 r_{k} + \epsilon [2
  \omega_1 D_0D_2 r_{k}+ D_1^2 r_{k}]+ 2\epsilon^{2} D_1D_2 r_{k}+\epsilon^{3} D_2^2 r_{k}
\end{equation*} 

We plug previous expansions \eqref{eq:yk=alpha...r} and \eqref{eq:yk2=eps...r} into \eqref{eq:yk..Phipcd};
by identifying the coefficients of the powers of $\epsilon$ in the expansion of \eqref{eq:yk..Phipcd}, we get:
\begin{align} \left\{
\begin{array}{rll}
& \omega_1^2 D_{0}^{2}y_{k}^{(1)}+\omega_{k}^{2}y_{k}^{(1)}=0~~~~,~~ &k=1 \dots, n \\ 
&  \omega_1^2 D_{0}^{2}y_{k}^{(2)}+\omega_{k}^{2}y_{k}^{(2)}= S_{2,k}~~,~~&k=1
\dots, n \label{eq:nddllibre-alpha}\\ 
& \omega_1^2 D_{0}^{2}r_{k}+\omega_{k}^{2} r_{k} = S_{3,k}~~~~,~~~~&k=1 \dots, n 
\end{array}
\right.
\end{align}
where $ S_{2,k},  S_{3,k}$ are defined below; to simplify the manipulations, we  set  $\delta \phi_{kp}=(\phi_{k,p}-\phi_{k,p-1})$; 
\begin{multline*}
 S_{2,k}= -c \delta \phi_{kp}\left( \sum_{l,m} y_{l}^{(1)}\delta \phi_{lp}
  y_{m}^{(1)}\delta \phi_{mp}\right) - d \delta \phi_{kp}\left(
  \sum_{g,l,o}y_{g}^{(1)}y_{l}^{(1)}\delta \phi_{lp}y_{o}^{(1)}\delta \phi_{gp}\delta \phi_{op}\right)
- 2 \omega_1 D_{0}D_{1} y_{k}^{(1)}
\end{multline*}

\begin{multline*}
   S_{3,k}= - c \delta \phi_{kp}\left( \sum_{l,j}
     y_{l}^{(1)}y_{j}^{(2)}\delta \phi_{lp} \delta \phi_{jp}\right) -  d
   \delta \phi_{kp}\left( \sum_{h,g,l}
     y_{h}^{(1)}y_{g}^{(1)}y_{l}^{(2)}\delta \phi_{hp} \delta \phi_{gp}
     \delta \phi_{lp}\right)  \\-2 \omega_1 D_{0}D_{2}y_{k}^{(1)}  -
   D_{1}^{2} y_{k}^{(1)}-2 \omega_1 D_{0}D_{1}y_{k}^{(2)} - \epsilon R_{k}(y_{1}^{(1)}, y_{1}^{(2)},r_{k}, \epsilon)
\end{multline*}
with
\begin{multline*}
 R_{k}(\epsilon,r_{k}, y_{k}^{(1)},y_{k}^{(2)}) =
 2D_{1}D_{2}y_{k}^{(1)}+2 \omega_1 D_{0}D_{2}y_{k}^{(2)}+D_{1}^{2}y_{k}^{(2)}\\
+c \delta \phi_{kp}\left( \sum_{l,j}
     y_{j}^{(2)}y_{j}^{(2)}\delta \phi_{lp} \delta \phi_{jp}\right)  
+c \delta \phi_{kp}\left( \sum_{l,j}
     y_{j}^{(1)}r_{l}\delta \phi_{jp} \delta \phi_{lp}\right)  \\
+d \delta \phi_{kp}\left( \sum_{h,g,l}
     y_{h}^{(1)}y_{g}^{(2)}y_{l}^{(2)}\delta \phi_{hp} \delta \phi_{gp}
     \delta \phi_{lp}\right)
+d \delta \phi_{kp}\left( \sum_{h,g,l}
     y_{h}^{(1)}y_{g}^{(1)}r_{l}\delta \phi_{hp} \delta \phi_{gp}
     \delta \phi_{lp}\right) \\
+\mathcal{D}_3 r_{k}  + \epsilon(D_{2}^{2}y_{k}^{(1)}+ 2D_{1}D_{2}y_{k}^{(2)}+ \epsilon D_{2}^{2}y_{k}^{(2)})
  + \epsilon \rho(y_{k}^{(1)}, y_{k}^{(2)},r_{k} ,\epsilon)
\end{multline*}

and with a polynomial in the variables $ r_n$ with coefficients  $y_l^{(1)},y_m^{(2)}$,
\begin{multline}
\label{eq:rho=}
\rho (y_{k}^{(1)}, y_{k}^{(2)},r_{k},\epsilon)=  c \delta \phi_{kp}\left( \sum_{l,j}
     y_{l}^{(2)}r_{j}\delta \phi_{lp} \delta \phi_{jp}\right) 
+d \delta \phi_{kp}\left( \sum_{h,g,l}
     y_{h}^{(1)}y_{g}^{(2)}r_{l}\delta \phi_{hp} \delta \phi_{gp}
     \delta \phi_{lp}\right) \\
 \qquad  \qquad +d \delta \phi_{kp}\left( \sum_{h,g,l}
     y_{h}^{(2)}y_{g}^{(2)}y_{l}^{(2)}\delta \phi_{hp} \delta \phi_{gp}
     \delta \phi_{lp}\right) \\
+\epsilon c\Bigg  [  c \delta \phi_{kp}\left( \sum_{l,j}
     r_{l}r_{j}\delta \phi_{lp} \delta \phi_{jp}\right)  
+d \delta \phi_{kp}\left( \sum_{h,g,l}
     y_{h}^{(2)}y_{g}^{(2)}r_{l}\delta \phi_{hp} \delta \phi_{gp}
     \delta \phi_{lp}\right)
\\
 \qquad  \qquad  \qquad  \qquad + d\delta \phi_{kp}\left( \sum_{h,g,l}
     y_{h}^{(1)}r_{g}r_{l}\delta \phi_{hp} \delta \phi_{gp}
     \delta \phi_{lp}\right) \Bigg ] \\
+ \epsilon^2   d \delta \phi_{kp}\left( \sum_{h,g,l}
     y_{h}^{(2)}r_{g}r_{l}\delta \phi_{hp} \delta \phi_{gp}
     \delta \phi_{lp}\right)  
+ \epsilon^3   d \delta \phi_{kp}\left( \sum_{h,g,l}
     r_{h}r_{g}r_{l}\delta \phi_{hp} \delta \phi_{gp}
     \delta \phi_{lp}\right)
\end{multline}
We set $\theta(T_0,T_1,T_{2})= T_0+\beta_{1}(T_1,T_{2})$; we
note that $D_0 \theta=1, \; D_1 \theta=D_1 \beta$ and $D_{2}\theta=D_{2}\beta_{1}$;\\
 we solve the first set of equations \eqref{eq:nddllibre-alpha},
 imposing $O(\epsilon^3)$ initial Cauchy data for $k \neq 1$ and $D_0 y_1^{(1)}(0)=0$; we get:
 
 \begin{equation}
\label{eq:y1=}
\left\{ 
\begin{array}{rl}
& y_{1}^{(1)}=a_{1}(T_1,T_{2}) ~\cos(\theta )\\
& y_{k}^{(1)}= 0 ,~~~~ k=2 \dots n
\end{array}
\right.
\end{equation} 

 \begin{rem}
  We note that $a_{1}$ and $\beta_{1}$ are not constants but functions
  of times $T_{1}$ and $T_{2}$ because u depends on these times
  scales. The dependence of these functions with respect to $T_{1}$
  and $ T_{2}$ will be determined by solving the equations of the following orders and eliminating secular terms.
\end{rem}
  First, we determine the dependence in $T_{1}$;
we manipulate the right hand sides:
\begin{multline*}
 S_{2,1}= -\delta \phi_{1p}
 \Big[\frac{ca_1^2}{2}(1+\cos(2\theta))\delta
 \phi_{1p}^2+\frac{da_1^3}{4} (\cos(3\theta)+3\cos(\theta))\delta
 \phi_{1p}^3 \Big] \\ 
+ 2\omega_1 \left [ a_1D_1 \beta_1 \cos(\theta)+ D_1 a_1 \sin(\theta)
\right ]
\end{multline*}

\begin{multline*}
 S_{2,k}= -\delta \phi_{kp}\Big[\frac{ca_1^2}{2}(1+\cos(2\theta))\delta
 \phi_{1p}^2+\frac{da_1^3}{4}(\cos(3\theta)+3\cos(\theta)) \delta
 \phi_{1p}^{3}\Big] , \text{ for } k \neq 1\\
\end{multline*}
In $S_{2,1}$, we gather the terms at angular frequency $\omega_1$;
\begin{equation}
  S_{2,1}= -3\frac{da_1^3}{4}\cos(\theta))\delta
 \phi_{1p}^4  
+ 2\omega_1 \left [ a_1D_1 \beta_1 \cos(\theta)+ D_1 a_1 \sin(\theta)
\right ] +S_2^{\sharp}
\end{equation}
with
\begin{equation*}
   S_{2,1}^{\sharp}= -\delta \phi_{1p}
 \Big[\frac{ca_1^2}{2}(1+\cos(2\theta))\delta
 \phi_{1p}^2+\frac{da_1^3}{4} \cos(3\theta) \delta
 \phi_{1p}^3 \Big ]
\end{equation*}
It appears some terms at the frequency of the system, these terms provide a solution
$y_{1}^{(2)}$ of the equation \eqref{eq:nddllibre-alpha} which is non periodic
and non bounded over long time intervals.
We will eliminate these terms by imposing: 

 \begin{align}
\label{eq:a1...beta1}
\left\{ 
\begin{array}{rl}
  & D_{1}a_{1}=0\\
  & D_{1}\beta_{1}= \frac{3 d \delta \phi_{1p}^{4} a_{1}^{2}}{8\omega_{1}}
\end{array}
\right.
 \end{align}

and if we assume that $\omega_1^2$ is a simple eigenvalue and
$\omega_k^2 \neq 9\omega_1^2 , ~ \omega_k^2 \neq 4 \omega_1^2$ (no internal resonance), the solution of the second equation \eqref{eq:nddllibre-alpha}  is:
\begin{equation}
\left\{ 
\begin{array}{rl}
 & y_{1}^{(2)}= \delta \phi_{1p}^{3}[-\frac{c a_{1}^{2}}{2\omega_{1}^{2}}~ + ~\frac{c a_{1}^{2}}{6 \omega_{1}^{2}}\cos(2\theta)]+\delta \phi_{1p}^{4}\frac{d a_{1}^{3}}{32 \omega_{1}^{2}}\cos(3\theta)\\
& y_{k}^{(2)}=\delta \phi_{kp}\delta \phi_{1p}^{2}[-\frac{c
  a_{1}^{2}}{2\omega_{k}^{2}}+
\frac{ca_{1}^{2}}{2(4\omega_{1}^{2}-\omega_{k}^{2})}
\cos(2\theta)]+\delta \phi_{kp}\delta \phi_{1p}^{3}\frac{d
  a_{1}^{3}}{4(9\omega_{1}^{2}-\omega_{k}^{2})} \cos(3\theta), \quad
k=2, \dots , n.
\end{array}
\right.
\end{equation}
 
where we have omitted the term at angular frequency $\omega_{1}$ which is redundant with $y_{1}^{(1)}$.\\
For the third set of equations of \eqref{eq:nddllibre-alpha}, $r$ is the
unknown, this equation contains non-linearities, we do not  solve it
but we show that the solution is bounded on an interval dependent of
$\epsilon$. The right hand side, after some manipulations is:
\begin{multline*}
S_{3,1}= \sin(\theta) \big(2\omega_{1} D_{2}a_{1}+ 2D_{1}a_{1} D_{1}\beta_{1}+
a_{1}D_{1}^{2}\beta_{1} \big)\\ 
\cos(\theta) \bigg ( 2 \omega_{1} a_{1} D_{2}\beta_{1} -D_{1}^{2}a_{1}+
a_{1}(D_{1}\beta_{1})^{2}
+\frac{5c^{2}\delta \phi_{1p}^{6}  a_{1}^{3}}{6\omega_{1}^{2}} -\frac{3d^{2}\delta \phi_{1p}^{8}
  a_{1}^{5}}{128\omega_{1}^{2}}
\bigg ) \\ 
+ S^{\sharp}_{3,1} - \epsilon R_{1}(r_{1}, \epsilon, y_{1}^{(1)}, y_{1}^{(2)}) 
\end{multline*}
where
\begin{multline}
\label{eq:s31sharp=}
S^{\sharp}_{3,1}=\frac{5 c d \delta \phi_{1p}^{7}
  a_{1}^{4}}{8\omega_{1}^{2}} + \sin 2\theta \left[ \frac{4 c\delta
    \phi_{1p}^{3} a_{1}}{3\omega_{1}}D_{1}a_{1} \right]
 + \cos 2\theta \left[ \frac{4 c  \delta \phi_{1p}^{3}a_{1}^{2}}{3\omega_{1}} D_{1}\beta_{1} +\frac{ 15c d \delta \phi_{1p}^{7}a_{1}^{4}}{32\omega_{1}^{2}}\right]\\
\quad \quad  + \sin 3\theta \left[ \frac{9 d\delta \phi_{1p}^{4}
    a_{1}^{2}}{16\omega_{1}}D_{1}a_{1} \right]   + \cos 3\theta \left[
  -\frac{ c^{2}\delta \phi_{1p}^{6} a_{1}^{3}}{6\omega_{1}^{2}}-\frac{3 d^{2}\delta
    \phi_{1p}^{8} a_{1}^{5}}{64\omega_{1}^{2}} +\frac{9 d a_1^3\delta \phi_{1p}^{4}}{16
    \omega}D_1 \beta_1 \right]\\ 
\quad \quad \quad \quad \quad + \cos 4 \theta \left[ -\frac{5c d\delta \phi_{1p}^{7} a_{1}^{4}}{32\omega_{1}^{2}}\right]-\frac{3 d^{2}\delta \phi_{1p}^{8} a_{1}^{5}}{128\omega_{1}^{2}}\cos 5 \theta  
\end{multline}
and
\begin{equation*}
S_{3,k}=  
\cos(\theta) \left (\frac{5c^{2}\delta \phi_{kp}\delta \phi_{1p}^{6} a_{1}^{3}}{6\omega_{1}^{2}} -\frac{3d^{2}\delta \phi_{kp}\delta \phi_{1p}^{8} a_{1}^{5}}{128\omega_{1}^{2}} \right) + S^{\sharp}_{3,k} - \epsilon R_{k}(r_{k}, \epsilon, y_{1}^{(1)}, y_{1}^{(2)}) 
\end{equation*}
where
\begin{multline*}
S^{\sharp}_{3,k}=\frac{5 c d\delta \phi_{kp} \delta \phi_{1p}^{6} a_{1}^{4}}{8\omega_{1}^{2}} + \sin 2\theta \left[ \frac{4 c\delta \phi_{kp}\delta \phi_{1p}^{2} a_{1}}{3\omega_{1}}D_{1}a_{1} \right]
 + \cos 2\theta \left[ \frac{4 c \delta \phi_{kp} \delta \phi_{1p}^{2}a_{1}^{2}}{3\omega_{1}} D_{1}\beta_{1} +\frac{ 15c d \delta \phi_{kp}\delta \phi_{1p}^{6}a_{1}^{4}}{32\omega_{1}^{2}}\right]\\
\quad \quad \quad + \sin 3\theta \left[ \frac{9 d\delta \phi_{kp}\delta \phi_{1p}^{3} a_{1}^{2}}{16\omega_{1}}D_{1}a_{1} \right] 
 + \cos 3\theta \left[  -\frac{ c^{2}\delta \phi_{kp}\delta
    \phi_{1p}^{5} a_{1}^{3}}{6\omega_{1}^{2}}-\frac{3 d^{2}\delta
    \phi_{kp}\delta \phi_{1p}^{7} a_{1}^{5}}{64\omega_{1}^{2}} 
+\frac{9d\delta\phi_{kp} \delta\phi_{11}^3a_1^3}{16\omega_1} D_1
\beta_1 \right]\\
\quad \quad \quad \quad \quad + \cos 4 \theta \left[ -\frac{5c d\delta \phi_{kp}\delta \phi_{1p}^{6} a_{1}^{4}}{32\omega_{1}^{2}}\right]-\frac{3 d^{2}\delta \phi_{kp}\delta \phi_{1p}^{7} a_{1}^{5}}{128\omega_{1}^{2}}\cos 5 \theta 
\end{multline*}
By imposing
\begin{align*}
\left\{
\begin{array}{rl}
&2\omega_{1} D_{2}a_{1}+ 2D_{1}a_{1} D_{1}\beta_{1}+
a_{1}D_{1}^{2}\beta_{1} =0 \\
& 2 \omega_{1} a_{1} D_{2}\beta_{1}   
 -D_{1}^{2}a_{1} +
a_{1}(D_{1}\beta_{1})^{2}+  \frac{5c^{2}\delta \phi_{1p}^{6}
  a_{1}^{3}}{6\omega_{1}^{2}} -\frac{3d^{2}\delta \phi_{1p}^{8} a_{1}^{5}}{128\omega_{1}^{2}}
=0 
\end{array}
\right .
\end{align*}
we get that $S_{3,1}=S^{\sharp}_{3,1} - \epsilon R_{1}(r_{1},
\epsilon, y_{1}^{(1)}, y_{1}^{(2)}$) contains no terms at the
frequency of the system.

 As $D_{1}a_{1}=0$ and $D_{1}\beta_{1}= \frac{-3 d \delta \phi_{1p}^{4}
   a_{1}^{2}}{8\omega_{1}}$, we obtain
 \begin{equation*}
 2 \omega_{1} a_{1} D_{2}\beta_{1}   +   a_{1} \Big (\frac{3 d \delta \phi_{1p}^{4}
   a_{1}^{2}}{8\omega_{1}} \Big)^{2}+  \frac{5c^{2}\delta \phi_{1p}^{6}
  a_{1}^{3}}{6\omega_{1}^{2}} -\frac{3d^{2}\delta \phi_{1p}^{8} a_{1}^{5}}{128\omega_{1}^{2}}
=0 
 \end{equation*}
 so:
 \begin{equation}
\label{eq:a2...beta2}	
 D_{2}a_{1}(T_{2})=0 ~~~~ \text{ and } ~~~~ D_{2}\beta_{1}(T_{2})=-\frac{5c^{2}\delta \phi_{1p}^{6} a_{1}^{2}}{12\omega_{1}^{3}} - \frac{15 d^{2}\delta \phi_{1p}^{8} a_{1}^{4}}{256\omega_{1}^{3}}
\end{equation}
 
As $a, \beta$ do not depend on $T_0$,  
\begin{align}
\left\{
\begin{array}{rl}
&  \frac{da_1}{dt}=\epsilon D_{1}a_{1}+\epsilon^{2}D_{2}a_{1}+\mathcal O(\epsilon^{3})\\
& \frac{d \beta}{dt}=\epsilon D_{1}\beta_{} +\epsilon^{2}D_{2}\beta_{} +\mathcal O(\epsilon^{3})
\end{array}
\right.
\end{align} 
and taking into account \eqref{eq:a1...beta1} and  \eqref{eq:a2...beta2},we obtain:\\
\begin{equation}
\label{eq:na1b1...b2}
 \frac{da_{1}}{dt}=0 ~~~~ \text{and } ~~~~
 \frac{d\beta_{1}}{dt}=\epsilon \frac{3 d \delta \phi_{1p}^{4}
   a_{1}^{2}}{8\omega_{1}} +\epsilon^{2} \Big(\frac{-5c^{2}\delta
   \phi_{1p}^{6} a_{1}^{2}}{12\omega_{1}^{3}} - \frac{15 d^{2}\delta
   \phi_{1p}^{8} a_{1}^{4}}{256\omega_{1}^{3}} \Big)
\end{equation}
As a result, the solution of these equations is:
\begin{equation}
\label{eq:nda1...db1}
 a_{1}= cte~~~~ \text{ and } ~~~~ \beta_{1}=\left[ \epsilon \frac{3 d
     \delta \phi_{1p}^{4} a_{1}^{2}}{8\omega_{1}}
   +\epsilon^{2} \Big(-\frac{5c^{2}\delta \phi_{1p}^{6}
     a_{1}^{2}}{12\omega_{1}^{3}} - \frac{15 d^{2}\delta \phi_{1p}^{8}
     a_{1}^{4}}{256\omega_{1}^{3}} \Big)\right] t
\end{equation}

In order to show that $r_{1}$ is bounded, after eliminating the
secular terms, we can go back to the variable $t$ in  the equation of  $r_{k}$, we get:
\begin{align*}
 &\frac{d ^{2} r_{1}}{d t^{2}} + \omega_{1}^2 r_{1}  = \tilde S^{}_{3,1}
\quad \text{ with } \quad
\tilde S^{}_{3,1}= S^{\sharp}_{3,1}(t, \epsilon) - \epsilon \tilde
R_{1}^{}(r_{1}, \epsilon, y_{1}^{(1)}, y_{1}^{(2)})\\
 &\frac{d ^{2} r_{k}}{d t^{2}} + \omega_{1}^2 r_{k}  = \tilde S^{}_{3,k}
\quad \text{ with } \quad
\tilde S^{}_{3,k}= S^{\sharp}_{3,k}(t, \epsilon) - \epsilon \tilde
R_{k}^{}(r_{k}, \epsilon, y_{k}^{(1)}, y_{k}^{(2)}) \quad k=2, \dots, n
\end{align*}
where $ S^{\sharp}_{3,1}$ is in \eqref{eq:s31sharp=} where all time
scales $T_0, T_1, T_2$ are expressed with the time variable $t$.
\begin{equation*}
 \tilde R_{1}^{}= R_{1}(\epsilon,r_{1},y_{1}^{(1)},y_{1}^{(2)})-\mathcal{D}_{3}r_{1}
\end{equation*}

After these manipulations, we can state a proposition which will be
easily proved with technical lemmas of the Appendix.

\begin{proposition}
\label{prop:systfree}
  We assume that $\omega_1^2$ is a simple eigenvalue and
  $\omega_k^2-9\omega_1^2 \neq 0, ~ \omega_k^2-4\omega_1^2 \neq 0$ (no
  internal resonance), then  it exists $\varsigma>0$ such that for all
  $t \le t_{\epsilon} =\frac{\varsigma}{\epsilon^{2}}$, the solution
  $\tilde y_k=\epsilon y_k$
  of  \eqref{eq:yktilde..Phipcd} with the initial data
\begin{align}
\begin{split}
&\tilde y_1(0)    =\epsilon a_{1} + \epsilon^{2}~(-\frac{\check c_1 a_1^2}{3
  \omega_1^2}~+~  \frac{\check d_1 a_1^3}{32 \omega_1^2})+ \epsilon^3 r_1(\epsilon,0), \quad \dot{\tilde y}_1(0)=\mathcal O(\epsilon)\\
&\tilde y_{k}(0)= \epsilon^{2}[-\frac{\check c_k a_1^2}{2 \omega_k^2}
+\frac{\check c_k a_1^2}{2(4\omega_1^2-\omega_k^2)} + \frac{\check d_k
  a_1^3}{4(9 \omega_1^2-\omega_k^2)}]+ \epsilon^3 r_k(\epsilon,0),
\quad \dot{\tilde  y}_k(0)=\mathcal O(\epsilon)
\end{split}
\end{align}
has the following expansion:
\begin{align}
\left\{
\begin{array}{rl}
& \tilde y_{1}(t)=\epsilon a_{1}
\cos(\nu_{\epsilon}t)+\epsilon^{2}[-\frac{\check c_1 a_1^2}{2
  \omega_1^2} + \frac{\check c_1 a_1^2}{6 \omega_1^2} \cos(2(\nu_{\epsilon}t))+  \frac{\check d_1 a_1^3}{32 \omega_1^2} \cos(3(\nu_{\epsilon}t))]+ \epsilon^{3}r_{1}(\epsilon,t) \\
& \tilde y_{k}(t)= \epsilon^{2}[-\frac{\check c_k a_1^2}{2 \omega_k^2}
+\frac{\check c_k a_1^2}{2(4\omega_1^2-\omega_k^2)}
\cos(2(\nu_{\epsilon}t))+ \frac{\check d_k a_1^3}{4(9 \omega_1^2-\omega_k^2)}\cos(3(\nu_{\epsilon}t))] + \epsilon^{3}r_{k}(\epsilon,t)
\end{array}
\right.
\end{align}
with $r_k$ uniformly bounded in $C^{2}(0,t_{\epsilon^{2}})$ for $k=1,\dots n$
and the angular frequency
\begin{equation}
\label{eq:omegan...libre}
 \nu_{\epsilon}= \omega_{1}+ \epsilon ( \frac{3 \check d_1
   a_{1}^{2}}{8\omega_{1}}) +\epsilon^{2} \left(\frac{-5 \check c^{2}_1
   a_{1}^{2}}{12\omega_{1}^{3}} - \frac{15 \check d^{2}_1
   a_{1}^{4}}{256\omega_{1}^{3}} \right) + \mathcal O(\epsilon^3)
\end{equation}
with $\delta \phi_{1p}=(\phi_{1,p}-\phi_{1,p-1}), \delta \phi_{kp}=(\phi_{k,p}-\phi_{k,p-1})$, 
$\check c_1=c(\delta
\phi_{1p})^3$, $\check d_1= d (\delta\phi_{1p})^4$
and
 $$\check c_k= c(\delta\phi_{1p})^2 \delta\phi_{kp}, ~ \check d_k=d
 (\delta\phi_{1p})^3 \delta\phi_{kp}$$

\end{proposition}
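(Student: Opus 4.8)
The plan is to follow, mode by mode, the argument already used for the one-degree-of-freedom free vibration: the whole formal construction (the solutions $y_k^{(1)}, y_k^{(2)}$, and the slow modulation equations \eqref{eq:a1...beta1} and \eqref{eq:a2...beta2} that fix $a_1,\beta_1$ and hence the frequency $\nu_\epsilon$ of \eqref{eq:omegan...libre}) has been carried out above, so the claimed expansion holds by construction and only the uniform boundedness of the remainders $r_k$ over the long interval $[0,t_\epsilon]$, $t_\epsilon=\varsigma/\epsilon^2$, remains to be proved.

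First I would return to the physical time $t$ in the third family of \eqref{eq:nddllibre-alpha} and write, for each $k$, a forced harmonic oscillator $\ddot r_k + \omega_k^2 r_k = \tilde S_{3,k}$ with $\tilde S_{3,k}=S_{3,k}^\sharp(t,\epsilon)-\epsilon\tilde R_k$ and $\tilde R_k=R_k-\mathcal{D}_3 r_k$. The source $S_{3,k}^\sharp$, given explicitly in \eqref{eq:s31sharp=}, is a trigonometric polynomial carrying only a mean value and the harmonics $2\theta,3\theta,4\theta,5\theta$; the resonant contribution $\cos\theta$ has been removed by the secular conditions. For $k=1$ this makes $S_{3,1}^\sharp$ orthogonal to the kernel $e^{\pm i\omega_1 t}$ of $\partial_t^2+\omega_1^2$, exactly as in the scalar case. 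For $k\neq 1$ the eigenvalue $\omega_1^2$ being simple gives $\omega_k\neq\omega_1$, and the no-internal-resonance hypotheses $\omega_k^2\neq 4\omega_1^2$, $\omega_k^2\neq 9\omega_1^2$ guarantee that the principal driving harmonics $2\omega_1,3\omega_1$ do not coincide with the natural frequency $\omega_k$, so the forcing is again non-resonant and yields a bounded particular solution. This is the decisive role of the no-internal-resonance assumption.

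I would then apply the technical lemma of the Appendix (lemma \ref{eq:lemmew }) in the same way as in the one-degree-of-freedom proof: take $S=S_{3,k}^\sharp$, a bounded periodic function orthogonal to the resonant exponential, which meets the lemma hypothesis, and take the perturbation to be $g=\tilde R_k$. The verification reduces to observing that $\tilde R_k$, together with $\rho$ of \eqref{eq:rho=}, is a polynomial in the unknowns $r_1,\dots,r_n$ whose coefficients are bounded functions of $t$ assembled from $y^{(1)}, y^{(2)}$, the constants $\check c_k,\check d_k$, and the slow derivatives; such an expression is Lipschitz on bounded subsets, as required.

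The main obstacle, and the real departure from the scalar case, is that the remainder equations are coupled: each $\tilde R_k$ contains every component $r_1,\dots,r_n$ through the sums $\sum_{l,j}$ and $\sum_{h,g,l}$. The boundedness cannot therefore be obtained one mode at a time; instead I would treat $r=(r_1,\dots,r_n)$ as a single vector unknown and run the lemma in vector form, closing a Gronwall-type estimate for the whole system on $[0,\varsigma/\epsilon^2]$. The Lipschitz constant of the vector field is uniform on bounded sets because all coefficients and slow functions are bounded, and the explicit factor $\epsilon$ in front of $\tilde R_k$ is precisely what lets the a priori bound be propagated up to times of order $\epsilon^{-2}$; once this is done, each $r_k$ is uniformly bounded in $C^{2}(0,t_\epsilon)$ and the stated expansion follows.
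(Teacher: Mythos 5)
Your argument follows the paper's proof: after the secular conditions have removed the resonant harmonic, the boundedness of the remainder is obtained by applying the Gronwall-type lemma of the Appendix to the coupled system, with $S_k=S^{\sharp}_{3,k}$ and $g_k=\tilde R_k$, and the vector form you propose to derive at the end is exactly Lemma \ref{eq:lemmew-syst}, which the paper invokes directly. One small caution: $S^{\sharp}_{3,k}$ also carries harmonics $4\theta$ and $5\theta$, so non-resonance of the driving for $k\neq 1$ needs more than $\omega_k^2\neq 4\omega_1^2,\,9\omega_1^2$ --- it is the lemma's hypothesis that $\omega_k$ and $\omega_1$ are $\mathbb{Z}$-independent which actually covers all the harmonics present in the source.
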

\begin{corollary}
  The solution of \eqref{eq:syst} with initial conditions 
\begin{align}
\begin{split}
& ^{t}\phi_1 \tilde u(0)   =\epsilon a_{1} + \epsilon^{2}~(-\frac{\check c a_1^2}{3
  \omega_1^2}~+~  \frac{\check d a_1^3}{32 \omega_1^2})+ \epsilon^3
r_1(\epsilon,0), \quad ^{t} \phi_1 \dot u(0)=\mathcal O(\epsilon^2)\\
& ^{t}\phi_k \tilde u(0)= \epsilon^{2}[-\frac{\check c_k a_1^2}{2 \omega_k^2}
+\frac{\check c_k a_1^2}{2(4\omega_1^2-\omega_k^2)} + \frac{\check d_k
  a_1^3}{4(9 \omega_1^2-\omega_k^2)}]+ \epsilon^3 r_k(\epsilon,0),
\quad  ^{t}\phi_k  \dot u(0)=\mathcal O(\epsilon^2)
\end{split}
\end{align}

\begin{equation}
\text{ is }  \tilde u(t)=\sum_{k=1}^n \tilde y_k(t) \phi_k+ \epsilon^{3}r(t,\epsilon)
\end{equation}
with the expansion of $y_k$ of previous proposition.
\end{corollary}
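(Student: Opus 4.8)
The plan is to deduce the corollary directly from Proposition \ref{prop:systfree} by transporting the modal expansions back to the physical coordinates through the linear change of basis \eqref{eq:uyphi}. Since $M$ and $K$ are symmetric positive definite, the generalised eigenvectors $\{\phi_k\}_{k=1}^n$ of \eqref{eq:vect-val-propre} form an $M$-orthonormal basis of $\mathbb{R}^n$; consequently every solution $\tilde u$ of \eqref{eq:syst} admits the unique decomposition $\tilde u=\sum_{k=1}^n \tilde y_k\phi_k$, and the biorthogonality $\phi_k^T M\phi_l=\delta_{kl}$ identifies each modal coordinate with the projection ${}^t\phi_k\tilde u=\tilde y_k$. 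Substituting this decomposition into \eqref{eq:syst} and using $K\phi_k=\omega_k^2M\phi_k$ returns exactly the scalar system \eqref{eq:yktilde..Phipcd}, as already performed in the derivation preceding the proposition.

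I would then match the initial data: the projections ${}^t\phi_k\tilde u(0)$ and ${}^t\phi_k\dot u(0)$ prescribed in the statement are, by the identification above, the modal Cauchy data $\tilde y_k(0)$ and $\dot{\tilde y}_k(0)$, and they coincide up to the indicated orders with the well-prepared data required in Proposition \ref{prop:systfree}. The no-internal-resonance hypotheses ($\omega_1^2$ simple, $\omega_k^2\neq 4\omega_1^2$ and $\omega_k^2\neq 9\omega_1^2$) are precisely those under which the second-order correctors $y_k^{(2)}$ entering these data are well defined, so the proposition applies to every coordinate. Hence, for all $t\le t_\epsilon=\varsigma/\epsilon^2$, each $\tilde y_k(t)$ admits the stated expansion with remainder $\epsilon^3 r_k(\epsilon,t)$ uniformly bounded in $C^2(0,t_\epsilon)$.

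Reassembling, I set $\tilde u(t)=\sum_{k=1}^n\tilde y_k(t)\phi_k$; collecting the leading contributions yields the announced expansion of $\tilde u$, while the remainders combine into $r(t,\epsilon)=\sum_{k=1}^n r_k(\epsilon,t)\phi_k$. Being a finite linear combination of the $C^2$-bounded scalar functions $r_k$ with the constant vectors $\phi_k$, this $r$ is again uniformly bounded in $C^2(0,t_\epsilon)$, which finishes the argument. The proof is essentially bookkeeping once Proposition \ref{prop:systfree} is available; the only step deserving care is the faithful translation of the vector initial data into the modal Cauchy data, which is exactly where the $M$-orthonormality and the non-resonance conditions are used.
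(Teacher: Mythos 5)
Your proof is correct and follows essentially the same route as the paper, which disposes of the corollary in one line as ``an easy consequence of the proposition and the change of function \eqref{eq:uyphi}''; you merely spell out the bookkeeping of projecting the initial data onto the modal basis and reassembling the remainders. (One small point: the $M$-orthonormality $\phi_k^T M\phi_l=\delta_{kl}$ gives $\tilde y_k=\phi_k^T M\tilde u$ rather than $\phi_k^T\tilde u$, but this slip is inherited from the paper's own statement of the corollary and does not affect the argument.)
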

\begin{proof}
 For the proposition, we use lemma \ref{eq:lemmew-syst}; set
 $S_1=\tilde S_{3,1}^{}, ~~
  S_k=S_{3,k} $ for $k=1, \dots n$; as we have enforced  \eqref{eq:nda1...db1},
  the functions $S_k$ are  periodic, bounded, and are orthogonal to  $e^{\pm it}$,  we have
  assumed  that $\omega_k$ and $\omega_1$ are $\mathbb{Z}$ independent for  $k \ne 1$; then $S$ satisfies
 satisfies the lemma hypothesis. Similarly, set $g=\tilde R$, its components are polynomials in $r$ with coefficients which are bounded functions, so it is lipschitzian on the bounded subsets  it satisfies the hypothesis of the lemma   and so the proposition is proved.
The corollary is an easy consequence of the proposition and the change of function \eqref{eq:uyphi}
\end{proof}

\begin{rem}
\begin{enumerate}
\item
  We have obtained a periodic asymptotic expansion of a solution of
  system \eqref{eq:syst}; they are called non linear normal
  modes in the mechanical community (\cite{nnm-kpgv,
    jiang-pierre-shaw04}. If the initial condition is close to an eigenvector $\Phi_1$ up to second order, the component of the solution on this eigenvector has an approximation which has the same form as for the single degree of freedom system; the other components remain small.
\item The frequency shift is given by a similar formula with $c$
  replaced by $\check c= c (\phi_{1,p} -\phi_{1,p-1})^3$, $d$ replaced by $\check d=  d (\phi_{1,p}-\phi_{1,p-1})^4$; so the frequency shift depends on the position of non-linearity with respect to  the components of the associated eigenvector.
\item In the spirit of inverse problems, this previous  point opens a way to localise the non-linearity.

\item
{\it We do not study the periodicity of  the solution  itself  but as
  the system is Hamiltonian, it could be obtained from general
  results, for example see \cite{zbMATH00052214}.
}
\item
In the next section, under the assumption of no internal resonance, we shall derive that
  the frequencies of the normal mode are close to resonant frequencies for an
  associated forced system, the so called primary resonance; with some changes, secondary
  resonance could be derived along similar lines.
\end{enumerate}
\end{rem}

\subsubsection{Numerical results}
We consider numerical solution of \eqref{eq:syst} with \eqref{eq:Phi=};
we have chosen $M=I$; $u=0$ at both ends, so  $K$ is the classical
matrix
$$k
\begin{pmatrix}
  2 & -1 & \hdotsfor[]{3}\\
-1 & 2 & -1 & \hdotsfor{2} \\
0&-1 &2&-1 & \dots \\
\hdotsfor[]{5} \\
 \hdotsfor[]{3} &-1 &2
\end{pmatrix}
;$$
$C=\lambda I$ with $\lambda=1/2$; for numerical balance, we have
computed $\frac{u}{\epsilon}$; with the choice $p=1$ we have
$\Phi_1=\epsilon [c u_1^2 + d u_1^3]$ with $c=1, d=1$.
In figure \ref{fig:phasefftns29}, for 29 degrees of freedom, we find the Fourier transform of
the components; some components have the same transform; the graphs
are slightly non symmetric;
we find also several curves in phase space
for some  components of the system. 

\begin{figure}
\label{fig:phasefftns29}
\includegraphics[height=8cm,width=15cm]{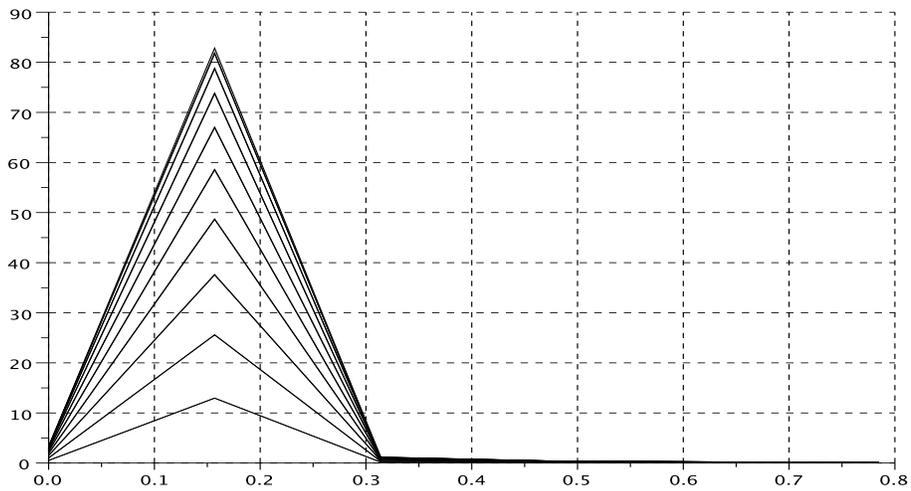}
\includegraphics[height=10cm,width=15cm]{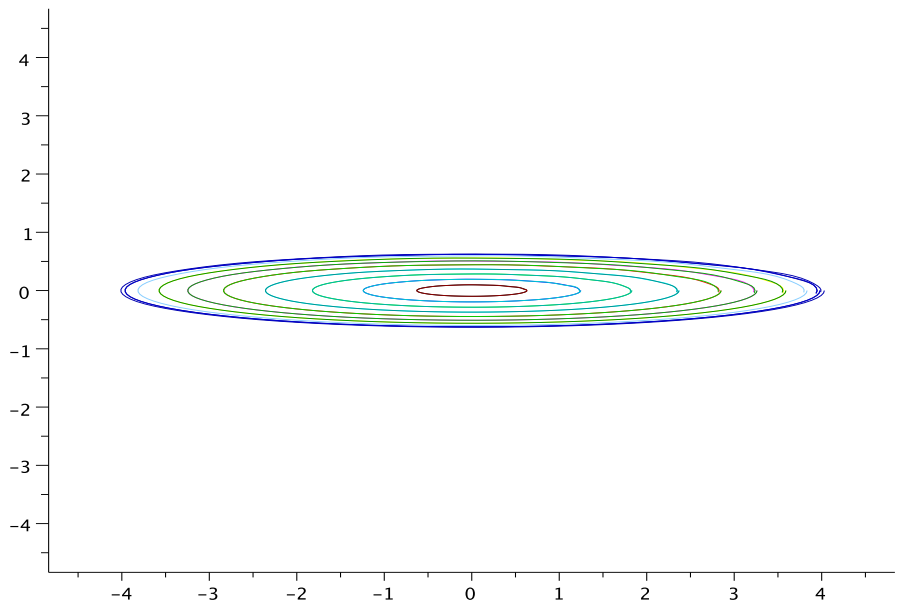}
\caption{ Absolute value of the Fourier transform for (fft) (left); phase portrait(right)}
\end{figure}

We remark that up to numerical integration errors, all frequencies are
equal and the components are periodic. All these characteristics are
coherent with the results obtained by asymptotic expansions: an
approximation of a non linear normal mode which is a continuation with
respect to $\epsilon$ of a linear normal mode.

\subsection{Forced, damped vibrations, triple scale expansion}

\subsubsection{Derivation of an asymptotic  expansion}

We consider a similar system of forced  vibrating masses attached to
springs with some  damping and submitted to a periodic forcing:
\begin{equation}
\label{eq:systforcamort}
 M \ddot{\tilde u} + \epsilon C \dot{\tilde u} + K u + \Phi(\tilde u,\epsilon) = \epsilon^{2} F\cos \tilde{\omega_{\epsilon}} t
\end{equation} 
with the same assumptions as in subsection \ref{subsec:syst-free-vib}.
We assume that the non linearity is local, all components are zero except for two components $p-1, \;p$ which correspond to the endpoints of some spring assumed to be non linear.
As for free vibrations, we perform  the change of function 
\begin{equation}
  \label{eq:u=yphi}
\tilde u=\sum_{k=1}^n \tilde y_k \phi_k   
\end{equation}
with $\phi_k$, the generalised eigenvectors of
\eqref{eq:vect-val-propre}.
However, the distribution of damping is almost always unknown and it
is usually necessary to make an assumption about its distribution;  a
simple and widely used  hypothesis is to choose a modal damping (
hypothesis of Basile in french terminology):
\begin{equation*}
 C= \epsilon_{M} M + \epsilon_{K} K
\end{equation*}
Therefore
\begin{equation*}
  \ddot{\tilde y}_{k} + \epsilon \lambda_{k} \dot{\tilde y}_{k} +
  \omega_{k}^{2} \tilde y_{k}  + ^{t}\phi_{k}\Phi(\sum_{i=1}^{n}
  \tilde y_{i}\phi_{i},\epsilon) = \epsilon^{2} f_{k} \cos \tilde{\omega_{\epsilon}} T_{0},~~~~~ k=1 \dots, n\\
\end{equation*} 
 with $$\epsilon_{M} + \epsilon_{K} \omega_{k}^{2}=\lambda_{k} ~~~and ~~ ^{t}\phi_{k} F = f_{k} $$ 
As for the free vibration case,  $\Phi$ has only 2 components which
are not zero, so the system can be written:
\begin{multline}
\label{eq:ykamorttilde..Phipcd}
\ddot{\tilde y}_k + \epsilon \lambda_{k} \dot{\tilde y}_{k}
+\omega_k^2 \tilde y_k+\left (
  \phi_{k,p-1}-\phi_{k,p}\right ) \Bigg[c \left (\sum_{i=1}^n
  \tilde y_i
  (\phi_{i,p}-\phi_{i,p-1}) \right)^2 + \frac{d}{\epsilon} \left
  (\sum_{i=1}^n \tilde y_i( \phi_i-\phi_{i,p-1}) \right)^3 \Bigg]\\
=\epsilon^{2}  f_{k}\cos \tilde{\omega_{\epsilon}} T_{0}, \quad   {\text for}~~~ k=1 \dots, n
\end{multline}
\begin{rem}
  As we intend to look for a small solution, we consider a change of
  function \fbox{$\tilde y_k=\epsilon y_k$} and we obtain the transformed equation:
\end{rem}
\begin{multline}
\label{eq:ykamort..Phipcd}
\ddot y_k + \epsilon \lambda_{k} \dot y_{k} +\omega_k^2 y_k+\left (
  \phi_{k,p-1}-\phi_{k,p}\right ) \Bigg[\epsilon c \left (\sum_{i=1}^n y_i
  (\phi_{i,p}-\phi_{i,p-1}) \right)^2 + \epsilon d \left
  (\sum_{i=1}^n y_i( \phi_i-\phi_{i,p-1}) \right)^3 \Bigg]\\
=\epsilon^{}  f_{k}\cos( \tilde{\omega}_{\epsilon}t) \quad   {\text for}~~~ k=1 \dots, n
\end{multline}

We will highlight a link between the frequency of the free solution of
the preceding paragraph and the amplitude of the steady state forced
solution; it is assumed that the excitation frequency is close to the natural frequency of the linear system
\begin{equation}
 \tilde{\omega_{\epsilon}}=\omega_{1} + \epsilon \sigma 
\end{equation}

As in the previous case, we look for a small solution with a  triple scale expansion, more precisely, we look for  \textbf{a periodic solution close to an eigenmode of the linear system},
 for example, we consider mode $y_{1}$ (by permuting the indexes it
 could be any mode); we set:
\begin{equation*}
  T_0= \tilde \omega_{\epsilon} t, \quad T_1=\epsilon t , \quad T_2=\epsilon^{2} t 
\text{ hence } ~~  D_{0}y_{k}= \frac{\partial y_{k}}{\partial T_{0}}
,~~  D_{1}y_{k}= \frac{\partial y_{k}}{\partial T_{1}}  \text{ and } 
D_{2}y_{k}= \frac{\partial y_{k}}{\partial T_{2}}
\end{equation*}
\par{ Derivatives of $y_{k}$ may be expanded:}
\begin{equation}
\label{eq:doty_{k}...ramort}
 \frac{d y_{k}}{dt}=\tilde \omega_{\epsilon} D_{0}y_{k} + \epsilon D_{1}y_{k} + \epsilon^{2} D_{2}y_{k}\\
\end{equation}
 and
\begin{equation}
\label{eq:ddoty_{k}...ramort}
 \frac{d^{2} y_{k}}{dt^{2}}= \tilde \omega_{\epsilon}^2
 D_{0}^{2}y_{k}+ 2 \epsilon \tilde \omega_{\epsilon} D_{0}D_{1}y_{k}+2\epsilon^{2}D_{0}D_{2}y_{k}+\epsilon^{2}D_{1}^{2}y_{k}+2\epsilon^{3}D_{1}D_{2}y_{k}+\epsilon^{4}D_{2}^{2}y_{k}
\end{equation} 
we use the {\it ansatz}
\begin{equation}
\label{eq:y_{k}=alpha...ramort}
y_{k}(t)= y_{k}(T_0, T_{1},T_{2})= y_{k}^{(1)}(T_0, T_{1},T_{2})+\epsilon^{}y_{k}^{(2)}(T_0, T_{1},T_{2})+\epsilon ^{2} r_{k}( T_{0},T_{1},T_{2})
\end{equation} 
we get:
\begin{align*}
\frac{d y_{k}}{dt}&= \frac{d y_{k}^{(1)}}{dt}+ \epsilon^{}\frac{d y_{k}^{(2)}}{dt}+ \epsilon^{2}\frac{dr_{k}}{dt}
  =\epsilon \frac{d y_{k}^{(1)}}{dt}+ \epsilon^{2}\frac{dy_{k}^{(2)}}{dt}+\epsilon^{2}D_{0}r_{k}+ \epsilon^{2}(\frac{dr_{k}}{dt}- D_{0}r_{k})\\
  &= [ \tilde \omega_{\epsilon} D_0y_{k}^{(1)} +\epsilon D_1
  y_{k}^{(1)} + \epsilon^{2}D_{2}y_{k}^{(1)}] + \epsilon [ \tilde
  \omega_{\epsilon} D_0y_{k}^{(2)} +\epsilon D_1 y_{k}^{(2)} + \epsilon^{2}D_{2}y_{k}^{(2)}]\\
& \qquad \qquad \qquad + \epsilon^{2}\tilde \omega_{\epsilon} D_0r_{k} +\epsilon^{2}(\frac{dr_k}{d
t}- \tilde \omega_{\epsilon} D_{0}r_{k})
\end{align*}
we note that $\frac{dr_{k}}{dt}- \tilde \omega_{\epsilon} D_{0}r_{k}=
\epsilon D_{1}r_{k}+ \epsilon^{2}D_{2}r_{k}$; it is of order $1$ in $\epsilon$.
For the second derivative, as in the case of free vibration, we introduce:
\begin{align*}
 \mathcal{D}_3 r_{k}&= \frac{1}{\epsilon} (\frac{d^{2}r_{k}}{dt^{2}}- \tilde \omega_{\epsilon}^2 D_{0}^{2}r_{k})\\
&=2 \tilde \omega_{\epsilon} D_0D_1 r_{k}+ \epsilon\left[ 2\tilde
  \omega_{\epsilon} D_{0}D_{2}r_{k}+ D_1^2r_{k} +2 D_2D_1r_{k} \right]+ \epsilon^{3}D_{2}^{2}r_{k}
\end{align*}

\begin{align*}
 \frac{d^2y_{k}}{dt^2}&=\frac{d^2y_{k}^{(1)}}{dt^2}+\epsilon \frac{d^2y_{k}^{(2)}}{dt^2}+ \epsilon^{2}\frac{d^2r_{k}}{dt^2}
=\frac{d^2y_{k}^{(1)}}{dt^2}+\epsilon^{1}\frac{d^2y_{k}^{(2)}}{dt^2}+\epsilon^2
\tilde \omega_{\epsilon} D_0^2r_{k} +\epsilon^3 {\cal D}_3 r_{k}\\ 
 &= \tilde \omega_{\epsilon}^2 D_{0}^{2}y_{k}^{(1)}+
 \epsilon^{}\left[ 2 \tilde \omega_{\epsilon} D_{0}D_{1}y_{k}^{(1)} + D_{0}^{2}y_{k}^{(2)}\right]\\
  & \qquad \qquad +\epsilon^{2}\left[ 2 \tilde \omega_{\epsilon}
    D_{0}D_{2}y_{k}^{(1)} + D_{1}^{2}y_{k}^{(1)}+2 \tilde
    \omega_{\epsilon} D_{0}D_{1}y_{k}^{(2)}+ D_{0}^{2}r_{k}\right]\\
 &\qquad \qquad \qquad +\epsilon^{3}\left[2D_{1}D_{2}y_{k}^{(1)}+2 \tilde \omega_{\epsilon} D_{0}D_{2}y_{k}^{(2)}+D_{1}^{2}y_{k}^{(2)}   +  \mathcal{D}_3 r_{k}  \right]\\
&\qquad \qquad \qquad \qquad +\epsilon^{4}\left[  D_{2}^{2}y_{k}^{(1)}+ 2D_{1}D_{2}y_{k}^{(2)}+ \epsilon D_{2}^{2}y_{k}^{(2)}\right] 
\end{align*}

We plug previous expansions \eqref{eq:doty_{k}...ramort}, \eqref{eq:y_{k}=alpha...ramort} and \eqref{eq:ddoty_{k}...ramort} of $y^{k}$  into \eqref{eq:ykamort..Phipcd}; by identifying  the coefficients of the powers of $\epsilon$, we get:

\begin{align}
\left\{
\begin{array}{rll}
& \omega_1^2 D_{0}^{2}y_{k}^{(1)}+\omega_{k}^{2}y_{k}^{(1)}=0~~~~~,~~~~~ &k=1 \dots, n \\ 
&  \omega_1^2 D_{0}^{2}y_{k}^{(2)}+\omega_{k}^{2}y_{k}^{(2)}= S_{2,k}~~~~~~,~~~~~&k=1 \dots, n \label{eq:nddlamort-alpha}\\ 
& \omega_1^2 D_{0}^{2} r+\omega_{k}^{2} r = S_{3,k}~~~~~,~~~~~ &k=1
\dots, n 
\end{array}
\right.
\end{align}
with
\begin{multline*}
S_{2,k}= -c \delta \phi_{kp}\left( \sum_{l,m} y_{l}^{(1)}\delta \phi_{lp}
  y_{m}^{(1)}\delta \phi_{mp}\right) - d \delta \phi_{kp}\left(
  \sum_{g,n,o}y_{g}^{(1)}\delta \phi_{gp}y_{n}^{(1)}\delta \phi_{np}y_{o}^{(1)}\delta \phi_{op}\right)
\\- 2 \omega_1 D_{0}D_{1} y_{k}^{(1)} - \lambda_{k} \omega_1 D_{0}y_{k}^{(1)}
-2\omega\sigma D_0^2y_k^{(1)}+  f_{k} \cos(T_{0}),
\end{multline*}
\begin{multline*}
 S_{3,k}= - c \delta \phi_{kp}\left( \sum_{l,j}
   y_{l}^{(1)}y_{j}^{(2)}\delta \phi_{lp} \delta \phi_{jp}\right) -  d
 \delta \phi_{kp}\left( \sum_{h,g,n}
   y_{h}^{(1)}y_{g}^{(1)}y_{n}^{(2)}\delta \phi_{hp} \delta \phi_{gp}
   \delta \phi_{np}\right)\\- 2 \omega_1 D_{0}D_{2}y_{k}^{(1)}  - D_{1}^{2}
 y_{k}^{(1)} - 2 \omega_1 D_{0}D_{1}y_{k}^{(2)} -\sigma^2 D_0^2 y_k^{(1)}  -2\omega_1 \sigma
 D_0^2 y_k^{(1)}-2\sigma D_0D_1y_k^{(1) } -2\omega_1 \sigma D_0^2y_k^{(2)}\\ 
- \lambda_{k} D_{1}y_{k}^{(1)}- -\lambda_k \sigma D_0y_k^{(1)}- \lambda_{k} \omega_1 D_{0}y_{k}^{(2)}-\epsilon
 R_{k}(\epsilon,r_{k},y_{1}^{(1)},y_{1}^{(2)})
\end{multline*}
where  $\delta \phi_{kp}=(\phi_{k,p}-\phi_{k,p-1})$ and with
\begin{multline*}
 R_{k}(\epsilon,r_{k},y_{k}^{(1)},y_{k}^{(2)})=2D_{1}D_{2}y_{k}^{(1)}+2\omega_1
 D_{0}D_{2}y_{k}^{(2)}+D_{1}^{2}y_{k}^{(2)}  \\
+c \delta \phi_{kp}\left( \sum_{l,j}
     y_{j}^{(2)}y_{j}^{(2)}\delta \phi_{lp} \delta \phi_{jp}\right)  
+c \delta \phi_{kp}\left( \sum_{l,j}
     y_{j}^{(1)}r_{l}\delta \phi_{jp} \delta \phi_{lp}\right)  \\
+d \delta \phi_{kp}\left( \sum_{h,g,n}
     y_{h}^{(1)}y_{g}^{(2)}y_{n}^{(2)}\delta \phi_{hp} \delta \phi_{gp}
     \delta \phi_{np}\right)
+d \delta \phi_{kp}\left( \sum_{h,g,n}
     y_{h}^{(1)}y_{g}^{(1)}r_{n}\delta \phi_{hp} \delta \phi_{gp}
     \delta \phi_{np}\right) \\
   \lambda_{k}(\omega_1 D_0r +
 D_{2}y_{k}^{(1)}+ D_{1}y_{k}^{(2)} +\epsilon D_{2}y_{k}^{(2)}) +\mathcal{D}_3r\\
+\epsilon \left ( D_{2}^{2}y_{k}^{(1)}+ 2 D_{1}D_{2}y_{k}^{(2)}+ \epsilon D_{2}^{2}y_{k}^{(2)} \right)
 +\lambda_{k}(\frac{dr}{dt}-\omega_1 D_0r) + \epsilon \rho(y_{k}^{(1)},y_{k}^{(2)},r_{k},\epsilon) 
\end{multline*}
and the polynomial $\rho$ displayed in \eqref{eq:rho=}.

We solve the first set of equations \eqref{eq:nddlamort-alpha} imposing  initial Cauchy data for $k
 \neq 1$  of order $\mathcal O(\epsilon^2)$ and $D_0y_1^{(1)}(0)=0$
we get:
 \begin{equation}
\left\{ 
\begin{array}{rl}
& y_{1}^{(1)}= a_{1}(T_1,T_{2}) ~\cos(\theta )\\
& y_{k}^{(1)}= 0 ,~~~~ k=2, \dots, n
\end{array}
\right.
\end{equation} 
with 
$ \theta(T_0,T_1,T_{2})= T_0+\beta(T_1,T_{2})$ for which we have $D_0 \theta=1, \; D_1 \theta=D_1 \beta_{1};$
we put terms involving $y_k^1, \; k \ge 2$ into $R_k$; so we obtain:

\begin{multline*}
 S_{2,1}= -\delta \phi_{1p} \Big[\frac{ca_1^2}{2}(1+\cos(2\theta))\delta \phi_{1p}^2+\frac{da_1^3}{4} (\cos(3\theta)+3\cos(\theta))\delta \phi_{1p}^3 \Big] \\ + 2\omega_1(D_1 a_1 \sin(\theta)+ a_1(D_1 \beta_1+\sigma) \cos(\theta))+ \lambda_{1}a_{1}\omega_{1}\sin(\theta) \\ + f_{1} (\cos(\theta)\cos(\beta_{1})+ \sin(\theta)\sin(\beta_{1})) 
\end{multline*}
\begin{multline*}
 S_{2,k}= -\delta \phi_{kp}\Big[\frac{ca_1^2}{2}(1+\cos(2\theta))\delta
 \phi_{1p}^2+\frac{da_1^3}{4}(\cos(3\theta)+3\cos(\theta)) \delta
 \phi_{1p}^{3}\Big] \\ +  f_{k} (\cos(\theta)\cos(\beta_{1})+
 \sin(\theta)\sin(\beta_{1}))  , \quad k= 1, \dots, n.
\end{multline*}

We will eliminate the terms at angular frequency $\omega_1$ hence  the
functions $a_{1}(T_1,T_{2}) ~$and$~ \beta_{1}(T_1,T_{2})$ satisfy:
 \begin{align*}
\label{eq:=d1a1...nddl}
\left\{
\begin{array}{rl}
 & 2 \omega_{1}D_{1}a_{1}+ \lambda_{1} a_{1} \omega_{1}= -f_{1} \sin( \beta_{1})\\
 & 2 \omega_{1}a_{1} D_{1}\beta_{1} +2\omega_1 a \sigma-\frac{3 d \delta \phi_{1p}^{4}a_{1}^{3}}{4} = - f_{1}\cos(\beta_{1}) 
 \end{array}
\right.
\end{align*}

and the solution of the second equation of\eqref{eq:nddlamort-alpha} is:
\begin{align}
\left\{
\begin{array}{rl}
&y_{1}^{(2)}=\delta \phi_{1p} \Big[(\frac{-ca_1^2}{2\omega_{1}^{2}}+\frac{ca_1^2}{6\omega_{1}^{2}} \cos(2\theta))\delta \phi_{1p}^2+\frac{da_1^3}{32\omega_{1}^{2}} \cos(3\theta)\delta \phi_{1p}^3 \Big]\\
& y_{k}^{(2)}= \delta \phi_{kp}
\Big[-\frac{ca_1^2}{2(\omega_{k}^{2}-\omega_{1}^{2})}+\frac{ca_1^2}{2(4\omega_{1}^{2}
  -2\omega_{k}^{2})} \cos(2\theta))\delta \phi_{1p}^2+\frac{da_1^3}{4(9\omega_{1}^{2}-\omega_{k}^{2})} \cos(3\theta)\delta \phi_{1p}^3 \Big]\\
\end{array}
\right.
\end{align}
where we have omitted the term at frequency $\omega_{1}$ which is redundant with $y_{1}^{(1)}$

For the third equation of \eqref{eq:nddlamort-alpha}, the unknown is
$r_{k}$;  we do not  solve it but we show that the solution is bounded
on an interval dependent on $\epsilon$. After some manipulations, the right hand side is:
\begin{multline*}
 S_{3,1} = 
+ \sin \theta \left[ 2
  \omega_{1} D_{2}a_{1}+ \lambda_{1} a_{1} D_{1}\beta_{1} +  2 D_{1}a_{1}D_{1}\beta_{1} +
  a_{1}D_{1}^{2}\beta_{1}  +2\sigma D_1a_1+\lambda_1 a_1 \sigma \right] \\
+\cos\theta \left[  2 \omega_{1}a_{1}D_{2}\beta_{1} - \lambda_{1}
  D_{1}a_{1}- D_{1}^{2}a_{1}+ a_{1} (D_{1}\beta_{1})^{2}+\sigma^2 a_1
  +2 \sigma a_1 D_1 \beta_1+
\frac{5 c^{2} \delta \phi_{1p}^{6}a_{1}^{3}}{6\omega_{1}} -\frac{3 \delta \phi_{1p}^{8}d^{2} a_{1}^{5}}{128\omega_{1}}\right]   \\
 + S_{3}^{\sharp} - \epsilon R(\epsilon,r,u^{(1)},u^{(2)}) 
\end{multline*}
where
\begin{align*}
S_{3,1}^{\sharp}&= \frac{5 c d \delta \phi_{1p}^{7}
  a_{1}^{4}}{8\omega_{1}^{2}} + \sin 2\theta \left[ \frac{4 c\delta
    \phi_{1p}^{3} a_{1}}{3\omega_{1}}D_{1}a_{1} +
  \frac{\lambda_{1}c\delta \phi_{1p}^{3} a_{1}^{2}}{3\omega_{1}}
  \right]\\
 &\quad+ \cos 2\theta \left[\frac{4 c  \delta \phi_{1p}^{3}a_{1}^{2}}{3\omega_{1}}  D_{1}\beta_{1}
  +\frac{15 c d\delta \phi_{1p}^{7}a_{1}^{4}}{32\omega_{1}^{2}}\right]\\
&\quad \quad + \sin 3\theta \left[ \frac{9 d\delta \phi_{1p}^{4} a_{1}^{2}}{16\omega_{1}}D_{1}a_{1} 
+ \frac{3 \lambda_{1} d \delta \phi_{1p}^{4}  a_{1}^{3}}{16\omega_{1}} \right]
 + \cos 3\theta \left[  \frac{9 d\delta \phi_{1p}^{4} a_{1}^{3}}{16\omega_{1}}D_{1}\beta_{1}-\frac{ c^{2}\delta \phi_{1p}^{6} a_{1}^{3}}{6\omega_{1}^{2}}-\frac{3 d^{2}\delta \phi_{1p}^{8} a_{1}^{4}}{64\omega_{1}^{2}} \right] \\
&\qquad \qquad \qquad \qquad + \cos 4 \theta \left[ -\frac{3 c d\delta \phi_{1p}^{7} a_{1}^{4}}{8\omega_{1}^{2}}\right]-\cos 5 \theta  \frac{3 d^{2}\delta \phi_{1p}^{8} a_{1}^{5}}{128\omega_{1}^{2}}
\end{align*}
and a similar expression for $S_{3,k}^{\sharp}$.
To eliminate the secular terms, we impose,
\begin{align*}
\left\{
\begin{array}{rl}
& 2 \omega_{1} D_{2}a_{1}+ \lambda_{1} a_{1} D_{1}\beta_{1} +2
D_{1}a_{1}D_{1}\beta_{1} + a_{1}D_{1}^{2}\beta_{1} +2\sigma D_1
a_1+\lambda_1 a_1 \sigma= 0\\
& 2 \omega_{1}a_{1}D_{2}\beta_{1} - \lambda_{1} D_{1}a_{1}-
D_{1}^{2}a_{1}+ a_{1} (D_{1}\beta_{1})^{2}+\sigma^2a_1 +2\sigma a_1
D_1 \beta_1+
\frac{5 c^{2} \delta \phi_{1p}^{6}a_{1}^{3}}{6\omega_{1}^2} -\frac{3 \delta \phi_{1p}^{8}d^{2} a_{1}^{5}}{128\omega_{1}^2}= 0
\end{array}
\right.
\end{align*}

As  $a_{1}$ and $\beta_{1}$ do not depend on $T_0$, the following
relations hold:
 
\begin{equation}
\left\{
\begin{aligned}
\label{eq:na=alpha...D1D1}
  \frac{da_{1}}{dt}=\epsilon D_{1}a_{1}+\epsilon^{2}D_{2}a_{1}+\mathcal O(\epsilon^{3})\\
 \frac{d\beta_{1}}{dt}=\epsilon D_{1}\beta_{1} +\epsilon^{2}D_{2}\beta_{1} +\mathcal O(\epsilon^{3})
\end{aligned}
\right.
\end{equation}
On the other hand,  we can determine the expression of $ D_{2}a_{1}~~$
and $~~D_{2}\beta$, like  for one degree of freedom: 
\begin{align}
\label{eq:nD2a1...D2gamma}
\left\{
\begin{array}{rl}
& D_{2}a_{1} = \frac{3 d \lambda_{1} \delta \phi_{1p}^{4} a_{1}^{3}}{16 \omega_{1}^{2}} + \frac{\sigma f_1 \sin\gamma}{4 \omega_{1}^{2}}+ \frac{\lambda_{1} f_1 \cos\gamma}{8\omega_{1}^{2}}+\frac{9d\delta \phi_{1p}^{4}a_{1}^{2}f_1\sin\gamma}{32 \omega_{1}^{3}}\\
& D_{2}\gamma =-\frac{\lambda_{1}^{2}}{8\omega_{1}}- \frac{15 d^{2}\delta \phi_{1p}^{8} a_{1}^{4}}{256 \omega_{1}^{3}}-\frac{5c^{2}\delta \phi_{1p}^{6}a_{1}^{2}}{12\omega_{1}^{3}}\\ 
&\qquad\qquad \qquad\qquad  \qquad\qquad +\frac{\sigma f_1 \cos\gamma}{4\omega_{1}^{2}a_{1}}+\frac{3 d \delta \phi_{1p}^{4}a_{1}f_1 \cos\gamma}{32 \omega_{1}^{3}}-\frac{\lambda_{1} f_1 \sin\gamma}{8\omega_{1}^{2}a_{1}}
\end{array}
\right.
\end{align}

now we return to \eqref{eq:na=alpha...D1D1} introducing \eqref{eq:=d1a1...nddl}
and \eqref{eq:nD2a1...D2gamma}, we obtain:
\begin{align}
\begin{split}
\label{eq:dta1...dtgammanddl}
& \frac{da_{1}}{dt} = \epsilon \Big (-\frac{f_1 \sin(\beta)}{2\omega_{1}}+
\frac{\lambda_{1} a_{1}}{2} \Big)+ \epsilon^{2} \left(\frac{3 d
    \lambda_{1} \delta \phi_{1p}^{4} a_{1}^{3}}{16 \omega_{1}^{2}} +
  \frac{\sigma f_1 \sin\beta}{4 \omega_{1}^{2}}+ \frac{\lambda_{1} f_1
    \cos\beta}{8\omega_{1}^{2}}+\frac{9d\delta
    \phi_{1p}^{4}a_{1}^{2}f_1\sin\beta}{32 \omega_{1}^{3}} \right)+O(\epsilon^{3})\\ 
 & \frac{d\beta}{dt}= \epsilon \Big(-\sigma + \frac{3d  \delta
   \phi_{1p}^{4}a_{1}^{2}}{8\omega_{1}}-
\frac{f_1\cos(\beta)}{2 \omega_{1}a_{1}} \Big) \\
&\qquad + \epsilon^{2} \Big ( -\frac{\lambda_{1}^{2}}{8\omega_{1}}- \frac{15
    d^{2}\delta \phi_{1p}^{8} a_{1}^{4}}{256 \omega_{1}^{3}}- 
\frac{5 c^{2}\delta \phi_{1p}^{6}a_{1}^{2}}{12\omega_{1}^{3}}
   +\frac{\sigma f_1
   \cos\beta}{4\omega_{1}^{2}a_{1}}+\frac{3 d \delta
   \phi_{1p}^{4}a_{1}f_1 \cos\beta}{32\omega_{1}^{3}}
-\frac{\lambda_{1} f_1 \sin\beta}{8\omega_{1}^{2}a_{1}} \Big) +O(\epsilon^{3})
\end{split}
\end{align}

\begin{rem}
  In this approach, like for one free degree of freedom, we are using the method of reconstitution. We notice these equations are similar to \eqref{eq:dta1...dtgamma}
\end{rem}
\begin{rem}
$ S_{3}^{\sharp}+ R(\epsilon,r, u^{(1)},u^{(2)})$ has no term at frequency  $\omega_{1}$ or which goes to $\omega_{1}$ .\\
 This will allow us to justify this expansion in certain conditions, before we consider the stationary solution of the system \eqref{eq:dta1...dtgammanddl} and the stability of the solution close to the stationary solution.
\end{rem}
 
\subsubsection{Stationary solution and stability}

Let us consider the stationary solution of \eqref{eq:dta1...dtgammanddl}, it satisfies:

\begin{align}
\left \{
  \begin{array}[h]{rl}
 & g_1(a_1,\beta_1,\sigma,\epsilon)=0, \\ & g_2(a_1,\beta_1,\sigma,\epsilon)=0
  \end{array}
\right .
\end{align}

with
\begin{align}
\label{eq:ndta1...dtgamma=0}
\left\{
\begin{array}{rl}
&g_1=  \epsilon (-\frac{f_1 \sin(\beta)}{2\omega_{1}}+ \frac{\lambda_{1} a_{1}}{2} ) +\\
& \qquad  \qquad  \qquad \epsilon^{2}(\frac{3 d \lambda_{1} \delta
  \phi_{1p}^{4} a_{1}^{3}}{16 \omega_{1}^{2}} + \frac{\sigma f_1
  \sin\beta}{4 \omega_{1}^{2}}+ \frac{\lambda_{1} f_1
  \cos\beta}{8\omega_{1}^{2}}+\frac{9d\delta
  \phi_{1p}^{4}a_{1}^{2}f_1\sin\beta}{32 \omega_{1}^{3}})+ \mathcal O(\epsilon^{3})\\ 
&
g_2= \epsilon (-\sigma +\frac{3d  \delta
  \phi_{1p}^{4}a_{1}^{2}}{8\omega_{1}}-\frac{f_1\cos(\beta)}{2
  \omega_{1}a_{1}}) \\ &  \qquad  \qquad + \epsilon^{2}( -\frac{\lambda_{1}^{2}}{8\omega_{1}}- \frac{15 d^{2}\delta \phi_{1p}^{8} a_{1}^{4}}{256 \omega_{1}^{3}}- \frac{5
 c^{2}\delta \phi_{1p}^{6}a_{1}^{2}}{12\omega_{1}^{3}}
 +\frac{\sigma f_1 \cos\beta}{4\omega_{1}^{2}a_{1}}-\frac{3 d \delta
   \phi_{1p}^{4}a_{1}f_1 \cos\beta}{32
   \omega_{1}^{3}}-\frac{\lambda_{1} f_1
   \sin\beta}{8\omega_{1}^{2}a_{1}})+ \mathcal O(\epsilon^{3})
\end{array}
\right.
\end{align}

The situation is very close to the 1 d.o.f. case; except the
replacement of $c$ by $\check c= c \delta \phi_{1p}^{3}$ and $d$ by of $\check{d}=d\delta \phi_{1p}^{4}$, the system \eqref{eq:ndta1...dtgamma=0} is the same as \eqref{eq:dta1...dtgamma=0}; the other components are zero. We state a similar proposition.

\begin{proposition}
 When  $$\sigma \le \frac{3 \check{d} \bar a_1^2}{4\omega_{1}}
 -\frac{1}{2}\sqrt{\frac{9 \check{d}^{2}\bar a_1^4}{16
    \omega_{1}^{2}}-\lambda_{1}^2}$$
 and $\epsilon$ small enough, the stationary solution $(\bar a_1,\bar \beta_1)$ of \eqref{eq:dta1...dtgammanddl} is stable in the sense of Lyapunov (if the dynamic solution starts close to the stationary one, it remains close and converges to it); to the stationary case corresponds the approximate solution of \eqref{eq:ykamort..Phipcd} 
\begin{equation}
\tilde y_{1 \, app}=\epsilon \bar a_{1} \cos(\tilde \omega_{\epsilon} t+ \bar \beta )+ \epsilon^{2}\Big(
\delta \phi_{1p} \Big [ \Big(\frac{-c\bar a_1^2}{ \omega_{1}^{2}}+\frac{c\bar
  a_1^2}{6\omega_{1}^{2}} \cos2(\tilde \omega_{\epsilon} t+ \bar \beta
) \Big)\delta \phi_{1p}^2+\frac{d\bar a_1^3}{32\omega_{1}^{2}}
\cos(3(\tilde \omega_{\epsilon} t+ \bar \beta ))\delta \phi_{1p}^3
\Big ]\Big);
\end{equation}
\begin{multline}
 \tilde y_{k \, app}= \epsilon^{2} \Big(  \delta \phi_{kp}
    \Big[ \Big(\frac{-c\bar
      a_{1}^2}{2(\tilde \omega_{k}^{2}+\omega_{1}^{2})}-\frac{c\bar
      a_{1}^2}{2(\omega_{k}^{2}-4\omega_{1}^{2})} \cos 2(\tilde \omega_{\epsilon} t+ \bar
    \beta ) \Big)\delta \phi_{1p}^2 \\
-\frac{d\bar a_{1}^3}{4(\tilde \omega_{k}^{2}-9\omega_{1}^{2})} \cos (3(\tilde \omega_{\epsilon}
t+ \bar \beta ))\delta \phi_{1p}^3 \Big] \Big)  
\end{multline}
it is periodic. 
\end{proposition}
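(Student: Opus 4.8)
The plan is to reduce the statement to the one–degree–of–freedom Proposition~\ref{prop:stab1ddl}. The decisive observation, already flagged in the text preceding the proposition, is that because the leading term satisfies $y_k^{(1)}=0$ for $k\neq 1$, the slow–time system \eqref{eq:dta1...dtgammanddl} governing $(a_1,\beta_1)$ is formally identical to the scalar system \eqref{eq:dta1...dtgamma} once we make the substitutions $c\mapsto\check c=c\,\delta\phi_{1p}^{3}$, $d\mapsto\check d=d\,\delta\phi_{1p}^{4}$, $\lambda\mapsto\lambda_1$ and $F_m\mapsto f_1$; correspondingly the stationary equations \eqref{eq:ndta1...dtgamma=0} coincide with \eqref{eq:dta1...dtgamma=0} under the same dictionary. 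Hence the entire linearisation computation of subsection~\ref{sss:stat-stab} transports verbatim, and the coupling to the other modes enters only through the bounded second–order terms $y_k^{(2)}$, which are pushed into the remainder.

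First I would linearise \eqref{eq:dta1...dtgammanddl} about $(\bar a_1,\bar\beta_1)$, obtaining a Jacobian $J$ whose entries are the partial derivatives of $g_1,g_2$ evaluated there. Reading off the trace and determinant from the scalar formulas with the substitutions above gives
\begin{equation*}
\mathrm{tr}(J)=-\lambda_1\epsilon,\qquad
\det(J)=\epsilon^{2}\Bigl[-\tfrac{\lambda_1^{2}}{4}+\sigma^{2}-\tfrac{3\check d\,\sigma\bar a_1^{2}}{2\omega_1}+\tfrac{27\check d^{2}\bar a_1^{4}}{64\omega_1^{2}}\Bigr]+O(\epsilon^{3}).
\end{equation*}
Repeating the sign discussion of the scalar case, the stated bound on $\sigma$ makes the bracket positive, so that $\det(J)>0$ while $\mathrm{tr}(J)<0$; for $\epsilon$ small enough the $O(\epsilon^{3})$ remainder cannot spoil these signs, and both eigenvalues of $J$ have strictly negative real part. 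Asymptotic stability of the linearised system then yields Lyapunov stability of $(\bar a_1,\bar\beta_1)$ for the full nonlinear slow system by the Poincar\'e--Lyapunov Theorem~\ref{th:poinc-lyapu}, exactly as in one dimension.

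It remains to check periodicity of the associated approximate solution. At the stationary point $\bar a_1,\bar\beta_1$ are constant in the slow times, so $\theta=\tilde\omega_\epsilon t+\bar\beta_1$ and the leading term $\bar a_1\cos\theta$, the correction $y_1^{(2)}$, and every driven component $y_k^{(2)}$ collapse to trigonometric polynomials in $\tilde\omega_\epsilon t$ carrying only the harmonics $1,2,3$. The no–internal–resonance hypotheses ($\omega_k^2\neq 4\omega_1^2$, $\omega_k^2\neq 9\omega_1^2$) keep the denominators in $y_k^{(2)}$ away from zero, so all coefficients are finite; consequently $\tilde y_{1\,app}$ and each $\tilde y_{k\,app}$ are $2\pi/\tilde\omega_\epsilon$–periodic, which is the asserted conclusion.

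The main obstacle is the bookkeeping that legitimises the transfer from the scalar case: one must verify that at the order relevant to the Jacobian the equations for $(a_1,\beta_1)$ genuinely decouple from the remaining modes, and that the non-resonance conditions suffice both to bound the driven modes $y_k^{(2)}$ and to synchronise them with the forcing frequency. I would stress that the stability asserted here is that of the amplitude--phase (reconstituted) system; tying it to convergence of the full expansion is the role of the analogue of Proposition~\ref{prop:conv-dev-forc}, but the present statement only requires the stability of $(\bar a_1,\bar\beta_1)$ and the periodicity of the approximation, so once the decoupling and finiteness are confirmed every subsequent step is a literal copy of the one-degree-of-freedom argument.
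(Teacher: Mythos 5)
Your proposal is correct and follows essentially the same route as the paper: the paper itself disposes of this proposition by observing that, after substituting $\check c$, $\check d$, $\lambda_1$, $f_1$ for $c$, $d$, $\lambda$, $F_m$, the slow system \eqref{eq:ndta1...dtgamma=0} coincides with the scalar one \eqref{eq:dta1...dtgamma=0} (the other modal components being zero), so the Jacobian computation and the Poincar\'e--Lyapunov argument of subsection \ref{sss:stat-stab} carry over verbatim. Your additional remarks on the periodicity of the driven components under the no-internal-resonance conditions make explicit what the paper leaves implicit, but do not constitute a different method.
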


With this result of stability, we can state precisely the approximation of the solution of \eqref{eq:systforcamort}

\subsubsection{Convergence of the expansion}
 In order to prove that $r_{k}$ is bounded, after  eliminating terms at frequency $\nu_{1}$, we go back to the variable $t$ for the third set of equations of \eqref{eq:nddlamort-alpha} 
.
\begin{align*}
&\frac{d ^{2} r_{k}}{d t^{2}}+  \omega_{1}^2 r_{k}=
\tilde{S}_{3,k}\quad \text{ for } k=1, \dots n \quad
 \text{ with}\\
& \tilde{S}_{3,1}=S_{3,1}^{\sharp}(t,\epsilon) -  \epsilon  \tilde
R_{1}(y_{1}^{(1)}, y_{1}^{(2)},r_{1},\epsilon) \quad
\text{ and} \text{ for } k \neq 1\\
& S_{3,k}= -2 c \delta \phi_{kp}[y_{1}^{(1)}y_{1}^{(2)}\delta \phi_{1p}^{2}] -3 d \delta \phi_{kp}[y_{1}^{(1)2}y_{1}^{(2)}\delta \phi_{1p}^{3}]-\epsilon R_{k}(\epsilon,r_{k},y_{1}^{(1)},y_{1}^{(2)})  
\end{align*}
where
\begin{equation*}
\tilde  R_{k}(\epsilon,r_{k},y_{1}^{(1)},y_{1}^{(2)})= R_{k}(\epsilon,r_{k},y_{1}^{(1)},y_{1}^{(2)})-{\cal D}_2 r_k - \lambda_k \bigl (\frac{d r_k}{d t}-\omega_k D_0 r_k \bigr )
\end{equation*}
with all the terms expressed with the variable $t$.

\begin{proposition}
  Under the assumption that $\omega_k^2 \neq 4 \omega_1^2, ~
  \omega_k^2 \neq 9\omega_1^2$ and $\omega_1^2$ a simple eigenvalue 
  (no internal resonance) for $k \ne 1$, there exists $\varsigma>0$
  such that for all $t \le t_{\epsilon}
  =\frac{\varsigma}{\epsilon^{2}}$, the solution $\tilde y =\epsilon
  y$ of  \eqref{eq:ykamorttilde..Phipcd} with initial data
 \begin{align*}
&\tilde y_1(0)=\epsilon a_{1} + \epsilon^{2}~ \big(\frac{-\check c_1
  a_{10}^2}{2\omega_{1}^2}~+~\frac{\check c_1  a_{10}^2}{6\omega_1^2}\cos(2 \beta_{0})~+
\frac{\check d_1 a_{10}^3}{32 \omega_1^2} \cos(3\beta_{0}) \big)+
\epsilon^3 r(0,\epsilon), \\
&\tilde y_{k}(0)= \epsilon^{2} \left(\frac{-\check c_1a_{10}^2}{2(\omega_k^2-\omega_{1}^{2})}+\frac{\check c_1a_{10}^2}{2(4\omega_{1}^{2}-\omega_k^2)}
\cos(2\beta_{0}) + \frac{\check d_1a_{10}^3}{4(9\omega_{1}^{2}-\omega_k^2)}
\cos(3(\beta_{0}) ) \right)+  \epsilon^3 r(0,\epsilon), 
\end{align*}
with  similar expressions for  $\dot y_1(0), \dot y_k(0)$
and with ($a_{10}, \beta_{0}$) close to the stationary solution ($\bar a_1, \bar \beta$)

$$|a_{10} -\bar a_1 | \le \epsilon^{2} C^{1}, ~~ |\beta_{0}- \bar \beta|
\le \epsilon^{2} C^{1}$$
 
has  the following expansion
 \begin{multline*}
\tilde  y_{1} =\epsilon  a_{1} \cos(\tilde \omega_{\epsilon} t+\beta(t))+\epsilon^{2}[ 
((\frac{-\check c_1a_1^2}{2\omega_{1}^{2}}+\frac{\check c_1a_1^2}{6\omega_{1}^{2}}
\cos(2(\tilde \omega_{\epsilon} t+\beta(t)))
+\frac{\check d_1a_1^3}{32\omega_{1}^{2}}
\cos(3(\tilde \omega_{\epsilon} t+\beta(t))) )] +\epsilon^3 r_1(t)
\end{multline*}

\begin{multline*}
\tilde y_{k}= \epsilon^{2}\Big(   \Big[(\frac{-\check c_k
  a_1^2}{2(\omega_{k}^{2}-\omega_{1}^{2})}+\frac{\check c_k a_1^2}{2(4\omega_{1}^{2}-\omega_{k}^{2})} \cos(2(\tilde \omega_{\epsilon} t+\beta(t)))
 +\frac{\check d_ka_1^3}{4(9\omega_{1}^{2}-\omega_{k}^{2})}
 \cos(3(\tilde \omega_{\epsilon} t+\beta(t)))\Big]\Big)  +\epsilon^3 r_k(t)
\end{multline*}
with $a_1, \beta$ solution of  \eqref{eq:dta1...dtgammanddl} and
with $r_k$ uniformly bounded in ${\cal C}^{2}(0,t_{\epsilon})$ for $k=1,\dots n$
and $\omega_1, \phi_1$ are the eigenvalue and eigenvectors defined in
\eqref{eq:vect-val-propre}, 
with $\delta \phi_{1p}=(\phi_{1,p}-\phi_{1,p-1}), \delta \phi_{kp}=(\phi_{k,p}-\phi_{k,p-1})$, 
$\check c_1=c(\delta
\phi_{1p})^3$, $\check d_1= d (\delta\phi_{1p})^4$
and
 $\check c_k= c(\delta\phi_{1p})^2 \delta\phi_{kp}, ~ \check d_k=d
 (\delta\phi_{1p})^3 \delta\phi_{kp}$
as in proposition \ref{prop:systfree}.
\end{proposition}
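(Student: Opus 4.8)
The plan is to merge the two templates already established in the paper: the modal reduction and no–internal–resonance bookkeeping used for the free multi–degree–of–freedom system (Proposition \ref{prop:systfree}), and the stability–around–the–stationary–solution argument used for the forced single–degree–of–freedom system (Proposition \ref{prop:conv-dev-forc}). First I would start from the reduced equations for the remainders already isolated just above the statement, namely
$$\frac{d^2 r_k}{dt^2}+\omega_1^2 r_k=\tilde S_{3,k},\qquad k=1,\dots,n,$$
in which the modulation system \eqref{eq:dta1...dtgammanddl} has been imposed precisely so that $\tilde S_{3,1}$ carries no term at the natural frequency $\omega_1$. For $k\neq 1$ the hypotheses that $\omega_1^2$ be simple and that $\omega_k^2\neq 4\omega_1^2$, $\omega_k^2\neq 9\omega_1^2$ are what allowed the harmonics at $2\omega_1,3\omega_1$ to be absorbed into the bounded correctors $y_k^{(2)}$ rather than to produce secular growth, so that the residual source $S_{3,k}^\sharp$ is free of resonant content.

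The second step addresses what is genuinely new relative to the free case: the amplitude and phase $a_1(t),\beta(t)$ are no longer constant but solve \eqref{eq:dta1...dtgammanddl}, so the source $S_{3,k}^\sharp$, once written in the variable $t$ through $\theta=\tilde\omega_\epsilon t+\beta(t)$, is not exactly periodic. I would replace $a_1(t),\beta(t)$ by the stationary values $\bar a_1,\bar\beta$ to obtain a genuinely periodic surrogate $S_{3,k}^\natural$. Invoking the stability proposition of the preceding paragraph, valid under
$$\sigma\le \frac{3\check d\,\bar a_1^2}{4\omega_1}-\frac{1}{2}\sqrt{\frac{9\check d^2\bar a_1^4}{16\omega_1^2}-\lambda_1^2},$$
the well–prepared data $|a_{10}-\bar a_1|\le\epsilon^2 C^1$, $|\beta_0-\bar\beta|\le\epsilon^2 C^1$ propagate so that $|a_1(t)-\bar a_1|\le\epsilon^2 C$ and $|\beta(t)-\bar\beta|\le\epsilon^2 C$ for all $t\le t_\epsilon=\varsigma/\epsilon^2$. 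Hence $|S_{3,k}^\sharp-S_{3,k}^\natural|\le\epsilon^2 C$, and this Lipschitz–controlled difference may be folded into the remainder $\epsilon\tilde R_k$ without altering its polynomial structure.

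The final step applies the vector lemma \ref{eq:lemmew-syst} componentwise with $S_k=S_{3,k}^\natural$ and $g=\tilde R=(\tilde R_k)$. The surrogate $S_{3,k}^\natural$ is periodic, bounded, and, by the no–internal–resonance hypothesis, orthogonal to $e^{\pm i\omega_1 t}$, so it meets the source hypothesis of the lemma; the nonlinearity $\tilde R$ is, by the displayed expressions of $R_k$ and $\rho$, a polynomial in $r_k,y_k^{(1)},y_k^{(2)},\epsilon$ with bounded coefficients, hence Lipschitz on bounded subsets, so it meets the nonlinearity hypothesis. The lemma then delivers each $r_k$ uniformly bounded in $C^2(0,t_\epsilon)$, which is exactly the claimed expansion.

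The main obstacle I anticipate is the simultaneous control of the two approximations on the same $O(1/\epsilon^2)$ window. The Poincaré–Lyapunov stability estimate must hold uniformly up to $t_\epsilon=\varsigma/\epsilon^2$ in order to legitimately freeze $(a_1,\beta)$ at $(\bar a_1,\bar\beta)$, while at the same time the Gronwall–type control of $r_k$ furnished by lemma \ref{eq:lemmew-syst} must hold on that very interval; reconciling the two constants, and verifying that the frozen source $S_{3,k}^\natural$ truly contains no component resonant with the operator $\tfrac{d^2}{dt^2}+\omega_1^2$ for every mode $k$, is the delicate point. It is here that the simplicity of $\omega_1^2$ together with the absence of internal resonance is genuinely used, the forcing and damping entering only through bounded, Lipschitz contributions already collected in $\tilde R_k$.
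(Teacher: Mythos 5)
Your proposal follows essentially the same route as the paper's own proof: reduce to the remainder equations in the variable $t$, replace the slowly varying $(a_1,\beta)$ by the stationary values to obtain a periodic surrogate source (controlled via the stability result on the $O(1/\epsilon^2)$ window), and apply lemma \ref{eq:lemmew-syst} with the no-internal-resonance hypothesis handling the modes $k\neq 1$ and the local Lipschitz property of the polynomial remainder $\tilde R$. If anything, you spell out the periodic-surrogate step more explicitly than the paper does for the $n$-degree-of-freedom case, where it is only indicated by the phrase ``not periodic but close to a periodic function.''
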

\begin{corollary}
  The solution of \eqref{eq:systforcamort} with 
 \begin{align*}
&\phi_1^T \tilde u(0)=\epsilon a_{1} + \epsilon^2
\big(\frac{-\check c_1
  a_{10}^2}{2\omega_{10}^2}~+~\frac{-\check c_1  a_{10}^2}{6\omega_1^2}\cos(2 \gamma_{0})~+
\frac{\check d_1 a_{10}^3}{32 \omega_1^2} \cos(3\gamma_{0}) \big)
+ \epsilon^{3}r_{1}(0,\epsilon),\\
&\phi_k^T \tilde u(0)=  \epsilon^{2} \left(\frac{-\check c_1a_{10}^2}{2(\omega_k^2-\omega_{1}^{2})}+\frac{\check c_1a_{10}^2}{2(4\omega_{1}^{2}-\omega_k^2)}
\cos(2\gamma_{0}) + \frac{\check d_1a_{10}^3}{4(9\omega_{1}^{2}-\omega_k^2)}
\cos(3(\gamma_{0}) ) \right)
+ \epsilon^{3}r_{k}(0,\epsilon), 
\end{align*}
 with similar expressions for $  \phi_1^T \dot{ \tilde u}(0),  \phi_k^T  \dot{\tilde
 u}(0)$ and
with $\omega_k, \phi_k$  the eigenvalues and eigenvectors defined in \eqref{eq:vect-val-propre}.
\begin{equation}
\text{ is }  \tilde u(t)=\sum_{k=1}^n \tilde y_k(t) \phi_k
\end{equation}
with the expansion of $y_k$ of previous proposition.
\end{corollary}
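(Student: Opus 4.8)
The plan is to deduce the corollary directly from the preceding proposition through the modal change of variables \eqref{eq:u=yphi}, in the same spirit as the free-vibration corollary. Since $M$ and $K$ are symmetric positive definite, the generalised eigenvectors $\phi_k$ of \eqref{eq:vect-val-propre} form an $M$-orthonormal basis of $\mathbb{R}^n$, so the reconstruction $\tilde u = \sum_{k=1}^n \tilde y_k \phi_k$ is a linear isomorphism with inverse $\tilde y_k = \phi_k^T M \tilde u$. First I would observe that this makes the prescribed Cauchy data of the corollary nothing but the modal projections $\phi_k^T M \tilde u(0),\ \phi_k^T M \dot{\tilde u}(0)$, which are exactly the initial data fed to the previous proposition. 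Because the change of variables is a bijection at the level of solutions, solving \eqref{eq:systforcamort} is equivalent to solving the modal system, and existence and uniqueness transfer between the two formulations.

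Next I would verify that projecting \eqref{eq:systforcamort} onto $\phi_k$ produces precisely the decoupled modal system \eqref{eq:ykamorttilde..Phipcd}. The inertia and stiffness terms diagonalise by construction, $\phi_k^T M \phi_l = \delta_{kl}$ and $\phi_k^T K \phi_l = \omega_k^2 \delta_{kl}$; the essential point is the damping, where the Basile (modal) hypothesis $C = \epsilon_M M + \epsilon_K K$ yields $\phi_k^T C \phi_l = (\epsilon_M + \epsilon_K \omega_k^2)\,\delta_{kl} = \lambda_k \delta_{kl}$, so each modal equation carries only its own coefficient $\epsilon \lambda_k$. The forcing projects as $\phi_k^T F = f_k$, while the local nonlinearity $\Phi$, supported only on the two endpoint components $p-1,p$, contributes the scalar coupling through the factors $\delta\phi_{kp} = \phi_{k,p}-\phi_{k,p-1}$ that thread through \eqref{eq:ykamorttilde..Phipcd}. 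This reproduces the modal system to which the proposition applies.

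With the modal equations and the initial data matched, I would invoke the previous proposition mode by mode: under the no-internal-resonance hypotheses ($\omega_1^2$ simple, $\omega_k^2 \neq 4\omega_1^2$, $\omega_k^2 \neq 9\omega_1^2$ for $k\neq 1$), each $\tilde y_k = \epsilon y_k$ admits the stated triple-scale expansion on $t \le t_\epsilon = \varsigma/\epsilon^2$ with remainder $\epsilon^3 r_k$ uniformly bounded in $C^2(0,t_\epsilon)$, the resonant mode $k=1$ being governed by the stable stationary solution $(\bar a_1,\bar\beta)$ and the off-resonant modes starting at order $\epsilon^2$. Reassembling via $\tilde u = \sum_{k=1}^n \tilde y_k \phi_k$ then gives the claimed representation, and by linearity the finite sum of the individual bounded remainders is again a bounded remainder of the same order.

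The step requiring the most care here is purely one of bookkeeping rather than analysis: one must check that the modal projections of the corollary's Cauchy data coincide with the \emph{well-prepared} data demanded by the proposition, in particular that $(a_{10},\beta_0)$ lies within $\epsilon^2 C^1$ of the stationary point $(\bar a_1,\bar\beta)$ for the driven mode while the remaining modes are genuinely $\mathcal O(\epsilon^2)$. No new estimate is needed, since the stability and convergence already come from the proposition; the corollary is therefore an immediate consequence of that proposition together with the $M$-orthonormal modal decomposition.
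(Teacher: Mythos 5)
Your proposal is correct and follows essentially the same route as the paper, which simply states that the corollary is an easy consequence of the preceding proposition together with the modal change of function $\tilde u=\sum_k \tilde y_k\phi_k$ of \eqref{eq:u=yphi}. Your elaboration of the bookkeeping (the $M$-orthonormal inversion $\tilde y_k=\phi_k^T M\tilde u$, the diagonalisation of $C$ under the Basile hypothesis, and the matching of well-prepared initial data) merely makes explicit what the paper leaves implicit.
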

\begin{proof}
 We follow a similar route as for one degree of freedom, we use  lemma   \ref{eq:lemmew-syst}. Set $S_1=S_{31}^{\sharp}, ~~ S_k=S_{3,k} $ for $k=1, \dots n$; as we have enforced  \eqref{eq:ndta1...dtgamma=0}, the functions $S_k$ are not periodic but close to a periodic function, bounded and are orthogonal to $e^{\pm it}$, we have assumed  that $\omega_k$ and $\omega_1$ are $\mathbb{Z}$ independent for $k \ne 1$; so  $S$ satisfies the lemma hypothesis. Similarly, set $g=\tilde R$, it is a polynomial in $r$ with coefficients which are bounded functions , so it is lipschitzian on the bounded subsets of $\mathbb R$, it satisfies the hypothesis of  lemma   \ref{eq:lemmew-syst} and so the proposition is proved.
The corollary is an easy consequence of the proposition and the change of function \eqref{eq:u=yphi}
\end{proof}

\subsubsection{Maximum of the  stationary solution}
We can state results similar to the case of one degree of freedom.
\begin{proposition}
  The stationary solution of \eqref{eq:dta1...dtgammanddl}
satisfies 
\begin{align}
\left\{
\begin{array}{rl}
&  (-\frac{f_1 \sin(\beta)}{2\omega_{1}}+ \frac{\lambda_{1}a_{1}}{2} ) +    \qquad \epsilon A_1(a_1,\beta,\sigma)  + \mathcal O(\epsilon^{2})=0\\
&  (-\sigma+\frac{3d \delta \phi_{1p}^4 a_{1}^2}{8\omega_{1}}-\frac{f_1
  \cos(\beta)}{2
  a_{1}\omega_{1}})+ \qquad \epsilon A_2(a_1,\beta,\sigma) + \mathcal O(\epsilon^{2})=0
\end{array}
\right.
\end{align}
with 
\begin{align*}
 & A_1(a,\beta,\sigma)=  \frac{3 d \delta \phi_{1p}^4 \lambda_{1} a_{1}^{3}}{16
  \omega_{1}^{2}} + \frac{\sigma f_1 \sin\beta}{4 \omega_{1}^{2}}+
\frac{\lambda f_1 \cos\beta}{8\omega_{1}^{2}}+
\frac{9d \delta \phi_{1p}^4 a_{1}^{2} f_1 \sin\beta}{32 \omega_{1}^{3}} \\
&A_2(a,\beta,\sigma)= -\frac{\lambda_{1}^{2}}{8\omega_{1}}- \frac{15 d^{2}\delta \phi_{1p}^8 a_{1}^{4}}{256 \omega_{1}^{3}}- \frac{5
 c^{2}\delta \phi_{1p}^6 a_{1}^{2}}{12\omega_{1}^{3}} \nonumber \\ 
& \qquad  \qquad  \qquad \qquad  \qquad+\frac{\sigma f_1 \cos\beta}{4\omega_{1}^{2}a_{1}}+ \frac{3 d \delta \phi_{1p}^4 a_{1}f_1 \cos\beta}{32 \omega_{1}^{3}}-\frac{\lambda_{1} f_1 \sin\beta}{8\omega_{1}^{2}a_{1}}
\end{align*}
this stationary solution reaches its maximum amplitude for $\sigma=\sigma_0^*+
\epsilon \sigma_1^* +O(\epsilon^2 )$
with 
\begin{gather}
   a_{1,0}^{\ast}=\frac{ f_1}{\lambda_{1} \omega_{1}}, \; \sigma^{\ast}_0
=\frac{3 \check d  a^{\ast 2}_{1,0}}{8 \omega_{1}}=\frac{3 \check d  f_1^2}{8 \lambda_{1}^2 \omega_{1}^3} ,
\quad \beta_0^*=-\frac{\pi}{2}
\end{gather}
and
 
$$\sigma_1^*=- \frac{87 \check
  d^{2} a_{1,0}^{* 4}}{256 \omega_{1}^{3}}- \frac{5 \check
  c^{2}a_{1,0}^{* 2}}{12\omega_{1}^{3}} - \frac{\lambda_{1}^2}{4 \omega_{1}}, \quad
\beta_1^*=-\frac{\lambda_{1}}{2 \omega_1},
 \quad a_{1,1}^*=-\frac{ a_{1,0}^* \sigma_0^*}{ \omega_{1}}$$
 the periodic forcing is at the angular frequency 
$$\tilde \omega_{\epsilon}= \omega_{1}+\epsilon \sigma_0^*
+\epsilon^2 \sigma_1^* + \mathcal O(\epsilon^2)
$$
up to the term involving the damping ratio $\lambda_{1}$, it is
slightly different of the approximate
angular frequency $\nu_{\epsilon}$ of  the undamped free periodic
solution \eqref{eq:omegan...libre}; for this frequency, the approximation (of the  solution $\tilde y =\epsilon
  y$ of  \eqref{eq:ykamorttilde..Phipcd} up to the
order $\epsilon^2$)  is periodic:
\begin{align}
\left\{
\begin{array}{rl}
&\tilde y_1(t)=\epsilon \bar a_{1}^* \cos( \tilde \omega_{\epsilon} t +
\bar \beta^*)+ \epsilon^{2} [(
\frac{-\check c_1\bar{a}_{1}^{*2}}{2\omega_{1}^{2}}+\frac{\check c_1\bar{a}_{1}^{*2}}
{6\omega_{1}^{2}}\cos(2 (\tilde \omega_{\epsilon} t + \bar \beta^*)))
 \\
&\qquad \qquad\qquad \qquad\qquad \qquad\qquad \qquad \qquad
+\frac{\check d_1\bar{a}_{1}^{*3}}{32\omega_{1}^{2}}\cos(3(\tilde \omega_{\epsilon} t + \bar \beta^*) ]
+ \epsilon^3 r_1(\epsilon,t)\\
&\tilde y_k(t)= \epsilon^{2}[(
\frac{-\check
  c_k\bar{a}_{1}^{*2}}{2(\omega_{k}^{2}-\omega_{1}^2)}-\frac{\check c_k\bar{a}_{1}^{*2}}
{2(\omega_{k}^{2}-4\omega_{1}^2)}\cos(2 (\tilde \omega_{\epsilon} t + \bar \beta^*)))
\\
&\qquad \qquad\qquad \qquad\qquad \qquad\qquad \qquad \qquad
-\frac{\check d_k \bar{a}_{1}^{*3}}{4(\omega_{k}^{2}-9\omega_{1}^2)}\cos(3(\tilde \omega_{\epsilon} t + \bar \beta^*) ] 
+ \epsilon^3 r_k(\epsilon,t)
\end{array}
\right.
\end{align}
and initial conditions  like in proposition \ref{prop:systfree}.
\end{proposition}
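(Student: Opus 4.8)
The plan is to exploit the structural identity, already noted just before the statement, between the modulation equations \eqref{eq:dta1...dtgammanddl} governing the mode-$1$ amplitude $a_1$ and phase $\beta$ and the one-degree-of-freedom equations \eqref{eq:dta1...dtgamma}. Indeed, under the substitution $c \mapsto \check c_1 = c(\delta\phi_{1p})^3$, $d \mapsto \check d_1 = d(\delta\phi_{1p})^4$, $\lambda \mapsto \lambda_1$, $F_m \mapsto f_1$ and $\omega \mapsto \omega_1$, the stationary system \eqref{eq:ndta1...dtgamma=0} becomes \emph{verbatim} the one-degree-of-freedom stationary system \eqref{eq:dta1...dtgamma=0}; the matching is consistent at every power of $\delta\phi_{1p}$ because $\check c_1^2 = c^2(\delta\phi_{1p})^6$ and $\check d_1^2 = d^2(\delta\phi_{1p})^8$ reproduce exactly the coefficients of the $c^2$ and $d^2$ terms appearing in $A_2$. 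Consequently the entire maximum analysis of subsection \ref{subsubsec:maxstatsol} transcribes without change, and it suffices to carry this substitution through the formulas obtained there.

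Following that route, I would first set $g_1, g_2$ as in \eqref{eq:ndta1...dtgamma=0} and append the third equation $g_3 = 0$ coming from $dg_1/d\sigma = 0$ together with the maximum condition $\partial a/\partial\sigma = 0$. Solving the three equations at $\epsilon = 0$ gives the leading values $\beta_0^* = -\pi/2$ (so $\sin\beta_0^* = -1$, which fixes the sign of $a$), $a_{1,0}^* = f_1/(\lambda_1\omega_1)$ and $\sigma_0^* = 3\check d a_{1,0}^{*2}/(8\omega_1)$, exactly as in \eqref{eq:a0s0=} after substitution. One then checks, as in the scalar case, that the extended Jacobian $J^\clubsuit$ is nonsingular at $(a_{1,0}^*, \beta_0^*, \sigma_0^*)$, so that the implicit function theorem furnishes a differentiable branch $\epsilon \mapsto (a^*, \beta^*, \sigma^*)$ in a neighbourhood of the stationary solution.

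Expanding this branch to first order in $\epsilon$ reproduces, term for term, the computation leading to $a_1^*$, $\beta_1^*$ and $\sigma_1^*$ in subsection \ref{subsubsec:maxstatsol}, now with the checked coefficients; in particular $\beta_1^* = -\lambda_1/(2\omega_1)$, $a_{1,1}^* = -a_{1,0}^*\sigma_0^*/\omega_1$ and $\sigma_1^* = -87\check d^2 a_{1,0}^{*4}/(256\omega_1^3) - 5\check c^2 a_{1,0}^{*2}/(12\omega_1^3) - \lambda_1^2/(4\omega_1)$. That the resulting approximate solution is genuinely periodic at $\sigma = \sigma^*$ follows because $\sin\beta_0^* = -1$ renders the modulation system stationary to the required order; the stability of this stationary point and the uniform bound on the remainder $r_k$ in $C^2(0,t_\epsilon)$ are inherited from the preceding stability and convergence propositions, under the standing no-internal-resonance hypotheses $\omega_k^2 \neq 4\omega_1^2$, $\omega_k^2 \neq 9\omega_1^2$ and $\omega_1^2$ simple.

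The main obstacle is not the scalar algebra, which is already done, but the justification that the reduction to the scalar modulation equations is legitimate: one must verify that, when the quadratic and cubic nonlinearity of \eqref{eq:ykamorttilde..Phipcd} is projected onto mode $1$, the only resonant (frequency-$\omega_1$) contributions come from the mode-$1$ components $y_1^{(1)}, y_1^{(2)}$, so that the secular-term conditions for $a_1, \beta$ close on themselves. This is precisely where the no-internal-resonance assumption enters: it guarantees that the off-mode second-order terms $y_k^{(2)}$ with $k \neq 1$ are well defined and bounded, and that every cross-mode product lands at a frequency bounded away from $\omega_1$, hence contributes only to the non-resonant part $S_{3,k}^\sharp$ that is absorbed into the remainder. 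Once this is in place, as established in the derivation preceding the statement, the proposition is a direct transcription of the one-degree-of-freedom result.
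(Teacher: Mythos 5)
Your proposal is correct and follows essentially the same route as the paper, which itself justifies this proposition only by observing that, after the substitutions $c\mapsto\check c_1$, $d\mapsto\check d_1$, $\lambda\mapsto\lambda_1$, $F_m\mapsto f_1$, $\omega\mapsto\omega_1$, the stationary system \eqref{eq:ndta1...dtgamma=0} coincides with the one-degree-of-freedom system \eqref{eq:dta1...dtgamma=0} and the computation of subsection \ref{subsubsec:maxstatsol} transcribes verbatim. Your additional check that the checked coefficients $\check c_1^2$, $\check d_1^2$ reproduce the $\delta\phi_{1p}^6$ and $\delta\phi_{1p}^8$ factors in $A_2$, and your explicit appeal to the no-internal-resonance hypothesis to justify the reduction to the scalar modulation equations, are exactly the points the paper leaves implicit.
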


\section{Conclusion}
For some differential systems modelling spring-masses vibrations with non linear springs,  we have derived and rigorously proved an asymptotic approximation of
periodic solution of free vibrations (so called non linear normal
modes); for damped vibrations with
periodic forcing with frequency close (but different) to free vibration frequency ( the
so called primary resonance case), we have obtained an asymptotic
expansion and derived that the amplitude is maximal close to the frequency of
the non linear normal mode.

We emphasize that the use of three time scales provides a more
accurate value of the link between frequency and amplitude (so called backbone) of a non
linear mode but it yields also a new insight
in the behavior of the solution which was not provided by a
double-scale analysis: the influence of the ratio of $c$ over $d$ on
the  shape of the backbone and the amplitude of the forced response to
an harmonic force as
is clearly displayed in figure  \ref{fig:ampligamm-freq1ddlc1} and \ref{fig:ampligamm-freq1ddld025c6}.

As an opening to a related problem, we can notice that
such non linear vibrating systems linked to a bar
generate acoustic waves; an analysis of the dilatation of a one-dimensional nonlinear crack impacted by a periodic elastic wave, with
a smooth model of the crack  may be carried over with a delay
differential equation, \cite{junca-lombard09}.

\paragraph{Acknowledgment}
We thank S. Junca  for his stimulating interest.

\section{Appendix}

\subsection{Technical lemmas}
\label{sec:techlem}

All these lemmas are recalled here for convenience of the reader; they already have been proposed in \cite{nbb-br-doubl}.
\label{sec:appendix}

\begin{lemma}
\label{eq:lemmew }
  Let   $w_{\epsilon}$  be solution of
  \begin{align}
\begin{split}
\label{eq:w"+w=}
    w"+w=S(t,\epsilon)+\epsilon g(t,w,\epsilon) \\
w(0)=0, \quad w'(0)=0
\end{split}
  \end{align}
If the right hand side satisfies the following  conditions
\begin{enumerate}
\item $S$  is a sum of periodic bounded functions:
  \begin{enumerate}
  \item 
for all $t$ and for all $\epsilon$ small enough, $S(t,\epsilon) \le M$
\item $\int_0^{2\pi}e^{i t}S(t,\epsilon)dt=0, \quad \int_0^{2\pi}e^{-i
    t}S(t,\epsilon)dt=0$  uniformly for  $\epsilon$ small enough
  \end{enumerate}
\item for all $R>0$, 
there exists $k_R$ such that for  $|u|\le R$ and $|v|\le R$, the
inequality  $|g(t,u,\epsilon)-g(t,v,\epsilon)|\le k_R |u-v|$ holds and
$|g(t,0,\epsilon)|$is bounded; in other words $g$ is locally
lipschitzian with respect to u.
\end{enumerate}
then, there exists $\gamma>0$ such that for $\epsilon$ small enough,
$w_{\epsilon}$ is uniformly bounded in $C^{2}(0,T_{\epsilon})$ with $T_{\epsilon}=\frac{\gamma}{\epsilon}$
\end{lemma}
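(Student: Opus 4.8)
The plan is to recast the initial value problem as an integral equation via variation of constants, and then run a Gronwall estimate over the long time window, the point being that hypothesis (1) removes the only mechanism (resonance) that could produce unbounded growth. Since the Green's function of $w''+w$ with vanishing Cauchy data is $\sin(t-s)$, the solution satisfies
\[
w(t)=\int_0^t \sin(t-s)\,S(s,\epsilon)\,ds+\epsilon\int_0^t \sin(t-s)\,g\bigl(s,w(s),\epsilon\bigr)\,ds .
\]
I would denote the first term by $w_S(t)$ and treat it as a fixed bounded forcing, while the second term carries the slowly accumulating nonlinear feedback.

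First I would establish that $w_S$ is bounded uniformly in $t$ and in $\epsilon$. Expanding $\sin(t-s)=\sin t\cos s-\cos t\sin s$ gives $w_S(t)=\sin t\,I_c(t)-\cos t\,I_s(t)$ with $I_c(t)=\int_0^t\cos s\,S\,ds$ and $I_s(t)=\int_0^t\sin s\,S\,ds$, so everything reduces to controlling these two primitives. Because $S$ is bounded and periodic, the integrands $\cos s\,S(s)$ and $\sin s\,S(s)$ are periodic; their mean values over a period are linear combinations of $\int_0^{2\pi}e^{\pm is}S\,ds$, which vanish by hypothesis (1b). A zero-mean bounded periodic integrand has a bounded (indeed periodic) primitive, so $I_c$ and $I_s$ stay bounded for all time, whence $|w_S(t)|\le C_0$ with $C_0$ independent of $t$ and $\epsilon$. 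This is the step that encodes the elimination of secular terms: without (1b) the primitives would grow linearly and $w$ would be $O(t)$.

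Then I would close the estimate on the nonlinear part by a Gronwall plus continuation argument. Using hypothesis (2), as long as $|w(s)|\le R$ one has $|g(s,w,\epsilon)|\le M_0+k_R|w(s)|$, where $M_0$ bounds $|g(s,0,\epsilon)|$; together with $|\sin(t-s)|\le 1$ this yields $|w(t)|\le C_0+\epsilon M_0 t+\epsilon k_R\int_0^t|w(s)|\,ds$, and Gronwall gives $|w(t)|\le (C_0+\epsilon M_0 t)\,e^{\epsilon k_R t}$. On the window $t\le T_\epsilon=\gamma/\epsilon$ the products $\epsilon t$ are $\le\gamma$, so the right-hand side is at most $(C_0+M_0\gamma)e^{k_R\gamma}$, a quantity independent of $\epsilon$. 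I would fix $R:=C_0+1$ first, which pins down $k_R$, then choose $\gamma$ small enough that $(C_0+M_0\gamma)e^{k_R\gamma}<R$; a standard continuation argument then shows the a priori bound $|w|\le R$ cannot be violated on $[0,T_\epsilon]$, which simultaneously prolongs the local Cauchy--Lipschitz solution to the whole interval. Finally $w'(t)=\int_0^t\cos(t-s)[\,S+\epsilon g\,]\,ds$ is bounded by the same reasoning, and $w''=-w+S+\epsilon g$ is then bounded too, giving the claimed uniform $C^2(0,T_\epsilon)$ bound.

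The delicate point is the interplay between the \emph{local} Lipschitz constant $k_R$ and the a priori radius $R$: since $k_R$ is only available once $R$ is chosen, and the validity of the estimate is conditional on $|w|\le R$, the constants must be selected in the right order ($R$ first, then $\gamma$) so that the Gronwall bound is self-improving. This bootstrap, rather than any single inequality, is the heart of the argument.
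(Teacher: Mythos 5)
Your proposal is correct and follows essentially the same route as the paper: split off the particular solution driven by $S$ (bounded for all time thanks to the orthogonality to $e^{\pm it}$), then control the remaining $O(\epsilon)$ nonlinear feedback by Duhamel plus Bellman--Gronwall over the window $t\le\gamma/\epsilon$, closed by a continuation/bootstrap on the a priori radius $R$. The only (cosmetic) difference is that you run Gronwall on $w$ itself with the bounded term $C_0$ absorbed into the inhomogeneity, whereas the paper first subtracts $w_1$ and applies Gronwall to $w_2=w-w_1$; your explicit verification that the zero-mean condition makes the primitives $I_c,I_s$ bounded, and your care about fixing $R$ before $\gamma$, are both points the paper leaves implicit.
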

\begin{proof}
The proof  is close  to the proof of  lemma 6.3 of
\cite{junca-br10}; but it is technically simpler since here we assume
$g$ to be locally lipschitzian with respect to $u$ whereas it is only
bounded in \cite{junca-br10}.
  \begin{enumerate}
  \item We first  consider
  \begin{align}
\begin{split}
    w_1"+w_1=S(t,\epsilon) \\
w_1(0)=0, \quad w_1'(0)=0
\end{split}
  \end{align}
as  $S$ is  a sum of periodic functions which are  uniformly
orthogonal to $e^{i t}$ and $e^{-i t}$, $w_1$ is bounded in  ${\cal C}^{2}(0,+\infty)$
\item Then we perform a change  of function: $w=w_1+w_2$, the following
  equalities hold
 \begin{align}
\begin{split}
    w_2"+w_2= \epsilon g_2(t,w_2,\epsilon)\\
w_2(0)=0, \quad w_2'(0)=0
\end{split}
  \end{align}
with $g_2$ which satisfies the same hypothesis as $g$:

for all  $R>0$,
there exists $k_R$ such that for $|u|\le R$ and $|v|\le R$, the
following inequality holds $|g_2(t,u,\epsilon)-g_2(t,v,\epsilon)|\le k_R |u-v|$.
Using Duhamel principle, the solution of this equation satisfies:
\begin{equation*}
  w_2=\epsilon \int_0^t\sin(t-s) g_2(s,w_2(s),\epsilon)ds
\end{equation*}
from which
\begin{equation*}
  |w_2(t)|\le \epsilon  \int_0^t |g_2(s,w_2(s),\epsilon)- g_2(s,0,\epsilon)|ds +
 \epsilon \int_0^t |g_2(s,0,\epsilon)| ds
\end{equation*}
so if $|w| \le R$, hypothesis of lemma imply
\begin{equation*}
   |w_2(t)|\le \epsilon \int_0^t  k_R|w_2| ds +\epsilon C t
\end{equation*}
A corollary of lemma of Bellman-Gronwall, see below, will enable to conclude.
It yields
\begin{equation*}
  |w_2(t)| \le \frac{ C}{k_R}\left (   \exp(\epsilon k_Rt) -1 \right)
\end{equation*}
Now set $T_{\epsilon}=\sup \{t| |w|\le R \}$, then we have
$$R  \le \frac{ C}{k_R}\left (   \exp(\epsilon k_Rt) -1 \right)$$
this shows that there exists $\gamma$ such that $|w_2| \le R$ for $t \le
T_{\epsilon}$, which means that it is  in $L^{\infty}(0,T_{\epsilon})$
for $T_{\epsilon}=\frac{\gamma}{\epsilon}$;  also, we have $w$ in
${\cal C}(0,T_{\epsilon})$ then as $w$ is solution of
\eqref{eq:w"+w=}, it is also bounded in 
  ${\cal C}^{2}(0,T_{\epsilon})$.
  \end{enumerate}
\end{proof}
\begin{lemma}
  (Bellman-Gronwall, \cite{bell-gronw,bellman-perturb}) Let  $u,\epsilon,\beta$ be continuous functions with $\beta \ge 0$, 
  \begin{equation*}
    u(t) \le \epsilon(t) + \int_0^t\beta(s)u(s) ds \text{ for } 0\le t\le T
  \end{equation*}
then 
\begin{equation*}
   u(t) \le \epsilon(t) + \int_0^t \beta(s) \epsilon(s) \left[ \exp(\int_s^t \beta(\tau)d\tau \right ] ds
\end{equation*}
\end{lemma}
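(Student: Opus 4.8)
The plan is to reduce the integral inequality to a linear differential inequality by introducing the auxiliary function $v(t)=\int_0^t\beta(s)u(s)\,ds$. By construction $v(0)=0$ and, since $\beta$ and $u$ are continuous, $v$ is differentiable with $v'(t)=\beta(t)u(t)$. The hypothesis reads $u(t)\le\epsilon(t)+v(t)$, and here the assumption $\beta\ge 0$ is essential: multiplying this inequality by $\beta(t)\ge 0$ preserves its direction, so that $v'(t)=\beta(t)u(t)\le\beta(t)\epsilon(t)+\beta(t)v(t)$. This gives the first-order linear differential inequality $v'(t)-\beta(t)v(t)\le\beta(t)\epsilon(t)$.

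First I would treat this differential inequality by the integrating-factor method. Set $\mu(t)=\exp\!\big(-\int_0^t\beta(\tau)\,d\tau\big)$, which is positive and differentiable with $\mu'(t)=-\beta(t)\mu(t)$. Multiplying the inequality by $\mu(t)>0$ again preserves the direction and, recognising the left-hand side as an exact derivative, yields $\frac{d}{dt}\big(\mu(t)v(t)\big)=\mu(t)\big(v'(t)-\beta(t)v(t)\big)\le\mu(t)\beta(t)\epsilon(t)$. Integrating from $0$ to $t$ and using $v(0)=0$ gives $\mu(t)v(t)\le\int_0^t\mu(s)\beta(s)\epsilon(s)\,ds$.

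Finally I would divide by $\mu(t)>0$ and simplify the kernel. Since $\mu(s)/\mu(t)=\exp\!\big(\int_s^t\beta(\tau)\,d\tau\big)$, this produces $v(t)\le\int_0^t\beta(s)\epsilon(s)\exp\!\big(\int_s^t\beta(\tau)\,d\tau\big)\,ds$. Combining with $u(t)\le\epsilon(t)+v(t)$ then delivers exactly the stated conclusion. I would note that this scheme does not require $\epsilon$ to be monotone—which is the reason for working with $v$ rather than estimating $u$ directly—so the proof applies to the general continuous $\epsilon$ used in the corollary of Lemma~\ref{eq:lemmew } (where $\epsilon(t)=\epsilon C t$ is linear). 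There is no genuine obstacle here: the only point demanding care is the repeated use of $\beta\ge 0$ (and $\mu>0$) to guarantee that each multiplication and the passage to the integrating factor keep the inequality in the correct direction; everything else is the standard exact-derivative computation.
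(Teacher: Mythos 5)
Your proof is correct: the reduction to $v(t)=\int_0^t\beta(s)u(s)\,ds$, the multiplication of the hypothesis by $\beta(t)\ge 0$, and the integrating factor $\mu(t)=\exp\bigl(-\int_0^t\beta(\tau)\,d\tau\bigr)$ constitute the standard proof of the integral (non-monotone $\epsilon$) form of the Bellman--Gronwall inequality, and each step where the inequality direction must be preserved is properly justified. Note that the paper itself states this lemma without proof, citing \cite{bell-gronw,bellman-perturb}, so there is no internal argument to compare against; your derivation simply supplies what the paper delegates to the references.
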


\begin{lemma} ( a consequence of previous lemma, suited for
  expansions, see \cite{sanders-verhulst})
  Let $u$ be a positive function, $\delta_2 \ge 0$, $\delta_1 >0$ and
$$ u(t) \le \delta_2 t + \delta_1\int_0^tu(s)ds$$ then 
$$u(t) \le \frac{\delta_2}{\delta_1}\left ( exp(\delta_1 t) -1
\right) $$
\end{lemma}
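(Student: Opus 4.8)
The plan is to read the stated inequality as a special case of the preceding Bellman--Gronwall lemma and then to collapse the resulting bound by one elementary integration. Concretely, I would apply that lemma with the majorant $\epsilon(t)=\delta_2 t$ and the constant kernel $\beta(s)\equiv \delta_1>0$: the hypothesis $u(t)\le \delta_2 t+\delta_1\int_0^t u(s)\,ds$ is then exactly the integral inequality required there, and the positivity assumptions $\delta_2\ge 0$, $\delta_1>0$ serve only to guarantee $\beta\ge 0$ and to make the final division by $\delta_1$ legitimate. The conclusion of Bellman--Gronwall gives
\[
u(t)\le \delta_2 t+\int_0^t \delta_1\,\delta_2 s\,\exp\!\Big(\int_s^t \delta_1\,d\tau\Big)\,ds
=\delta_2 t+\delta_1\delta_2\int_0^t s\,e^{\delta_1(t-s)}\,ds .
\]

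The only genuine computation is the integral $\int_0^t s\,e^{\delta_1(t-s)}\,ds$, which I would evaluate by pulling out the factor $e^{\delta_1 t}$ and integrating $s\,e^{-\delta_1 s}$ by parts once, obtaining
\[
\int_0^t s\,e^{\delta_1(t-s)}\,ds=\frac{1}{\delta_1^{2}}\big(e^{\delta_1 t}-1\big)-\frac{t}{\delta_1}.
\]
Substituting this back, the term $\delta_1\delta_2\cdot(-t/\delta_1)=-\delta_2 t$ cancels the leading $\delta_2 t$ exactly, and what survives is precisely $u(t)\le \frac{\delta_2}{\delta_1}\big(e^{\delta_1 t}-1\big)$, which is the claim. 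The point worth checking carefully is this exact cancellation of the two $\delta_2 t$ contributions: it is what makes the final bound depend only on the ratio $\delta_2/\delta_1$ and the exponential factor, rather than retaining a spurious linear-in-$t$ term.

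As a self-contained alternative that avoids quoting the general lemma, I would set $U(t)=\int_0^t u(s)\,ds$, so that $U'=u\le \delta_2 t+\delta_1 U$, multiply by the integrating factor $e^{-\delta_1 t}$ to obtain $\big(e^{-\delta_1 t}U\big)'\le \delta_2 t\,e^{-\delta_1 t}$, integrate from $0$ to $t$ with $U(0)=0$, and then feed the resulting estimate for $U$ back into $u\le\delta_2 t+\delta_1 U$. Both routes reduce to the same one-line quadrature, so there is no real obstacle here; the statement is a routine corollary and the argument is essentially bookkeeping, the only subtlety being the cancellation noted above.
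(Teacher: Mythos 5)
Your proof is correct and follows exactly the route the paper intends: the paper gives no details, merely labelling the statement ``a consequence of previous lemma,'' and your application of the Bellman--Gronwall lemma with $\epsilon(t)=\delta_2 t$ and $\beta\equiv\delta_1$, together with the integration by parts and the exact cancellation of the two $\delta_2 t$ terms, is the computation being left to the reader. Both your main argument and the integrating-factor alternative check out.
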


\begin{lemma}
\label{eq:lemmew-syst}
  Let $v_{\epsilon}=[v_1^{\epsilon}, \dots, v_N^{\epsilon}]^T$ be the solution of the following system:
\begin{equation}
\omega^2 (v_k^{\epsilon})"+\omega_k^2v_k^{\epsilon}=S_k(t)+\epsilon g_k(t,v_{\epsilon})
\end{equation}
If   $\omega$ and $\omega_k$ are $\mathbb{Z}$ independent for all
$k=2 \dots N$ and 
the right hand side satisfies the following conditions with $M>0, \; C>0$ prescribed constants:
\begin{enumerate}
\item $S_k$ is a sum of bounded periodic functions, $|S_k(t)|\le M$ which satisfy the  non resonance conditions:

  \item $S_1$ is orthogonal to $e^{\pm it}$,
    i.e. $\int_0^{2\pi}S_1(t)e^{\pm it}dt=0$ uniformly for $\epsilon$
    going to zero

\item for all $R>0$ there exists $k_R$ such that for $\|u\| \le R$, $\|v\|\le R$, the following inequality holds for $k=1, \dots, N$ :
$$|g_k(t,u,\epsilon)-g_k(t,v,\epsilon)|\le k_R \|u-v\|$$ and $|g_k(t,0,\epsilon)|$ is bounded
\end{enumerate}
then there exists $\gamma>0$ such that for $\epsilon$ small enough
$v_{\epsilon}$ is bounded in ${\cal C}^{2}(0,T_{\epsilon})$ with $T_{\epsilon}=\frac{\gamma}{\epsilon}$
\end{lemma}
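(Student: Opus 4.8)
The plan is to mimic, componentwise, the two–step argument already used for the scalar Lemma~\ref{eq:lemmew }. I would first split the unknown as $v_{\epsilon}=w_1+w_2$, where $w_1=[w_{1,1},\dots,w_{1,N}]^T$ solves the purely inhomogeneous, fully decoupled linear system
\begin{equation*}
\omega^2 (w_{1,k})''+\omega_k^2 w_{1,k}=S_k(t),\qquad w_{1,k}(0)=w_{1,k}'(0)=0,
\end{equation*}
and $w_2$ carries the small nonlinear coupling. The merit of this splitting is that each $w_{1,k}$ is governed by a single scalar linear oscillator and can be handled by elementary Fourier analysis, while $w_2$ is controlled by a Duhamel--Gronwall estimate exactly as in the scalar case.

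The first key step is to show that $w_1$ is bounded in ${\cal C}^2(0,+\infty)$. Since in every application each $S_k$ is a trigonometric polynomial, write $S_k(t)=\sum_m c_{k,m}e^{imt}$ over a finite set of integer frequencies; the particular response of the $k$-th oscillator to the mode $e^{imt}$ is $c_{k,m}e^{imt}/(\omega_k^2-m^2\omega^2)$, which stays bounded precisely when $\omega_k\neq|m|\omega$. For $k\ge 2$ the $\mathbb{Z}$-independence of $\omega$ and $\omega_k$ guarantees $\omega_k\neq m\omega$ for every integer $m$, and as the mode set is finite the denominators are bounded away from zero, so $w_{1,k}$ is bounded. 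For $k=1$, where $\omega_1=\omega$ so that resonance would occur at $m=\pm1$, the orthogonality hypothesis $\int_0^{2\pi}S_1(t)e^{\pm it}\,dt=0$ forces $c_{1,\pm1}=0$ and removes exactly the secular modes; the remaining response, together with the homogeneous part matching the zero initial data, is bounded. Hence $\|w_1\|_{{\cal C}^2}\le C_0$ uniformly on $[0,+\infty)$.

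The second step treats $w_2$, which solves $\omega^2(w_{2,k})''+\omega_k^2 w_{2,k}=\epsilon\,\tilde g_k(t,w_2,\epsilon)$ with $\tilde g_k(t,w_2,\epsilon)=g_k(t,w_1(t)+w_2,\epsilon)$ and zero initial data. Because $w_1$ is bounded and each $g_k$ is locally Lipschitz in its state variable with $|g_k(t,0,\epsilon)|$ bounded, the shifted nonlinearity $\tilde g_k$ inherits the same structure: it is locally Lipschitz in $w_2$ and $|\tilde g_k(t,0,\epsilon)|=|g_k(t,w_1(t),\epsilon)|$ is bounded. Applying Duhamel's formula to each component (the Green kernel $\tfrac{1}{\omega\omega_k}\sin(\tfrac{\omega_k}{\omega}(t-s))$ is bounded since $K$ is positive definite) and summing yields, as long as $\|w_2\|\le R$, an estimate of the form $\|w_2(t)\|\le \epsilon C t+\epsilon k_R\int_0^t\|w_2(s)\|\,ds$. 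The Bellman--Gronwall corollary suited for expansions then gives $\|w_2(t)\|\le \tfrac{C}{k_R}\bigl(e^{\epsilon k_R t}-1\bigr)$, and setting $T_\epsilon=\sup\{t:\|w_2\|\le R\}$ shows that the exponent reaches a fixed value only at a time of order $1/\epsilon$, so $T_\epsilon\ge\gamma/\epsilon$ for some $\gamma>0$; consequently $v_\epsilon=w_1+w_2$ is bounded in ${\cal C}^2(0,T_\epsilon)$.

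I expect the main obstacle to be the off-resonant components in the first step: one must check that the absence of an integer relation between $\omega$ and each $\omega_k$ really keeps every denominator $\omega_k^2-m^2\omega^2$ away from zero over the finite mode set, while simultaneously verifying that the single resonant equation ($k=1$) is rescued solely by the orthogonality of $S_1$ to $e^{\pm it}$. Once the uniform bound on $w_1$ is secured, the nonlinear step is routine and follows the scalar lemma almost verbatim, the only adaptation being that the Lipschitz and Gronwall estimates are run in the Euclidean norm of $\mathbb{R}^N$.
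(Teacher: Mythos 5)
Your proposal follows essentially the same route as the paper's proof: the same splitting $v_\epsilon=w_1+w_2$ into a decoupled linear inhomogeneous part (bounded via the non-resonance conditions, with the orthogonality of $S_1$ to $e^{\pm it}$ rescuing the resonant component $k=1$) and a small nonlinear remainder controlled by Duhamel's formula and the Bellman--Gronwall corollary on a time interval of length $\gamma/\epsilon$. Your treatment of the linear step is in fact slightly more explicit than the paper's (which merely asserts absence of resonance), but the argument is the same.
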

\begin{proof}
 \begin{enumerate}
 \item 
We first  consider the linear system
\begin{gather}
\begin{split}
\label{eq:vk1}
 \omega_1^2 (v_{k,1})"+\omega_k^2v_{k,1}=S_k  \\
 v_{k,1}(0)=0  \text{ and }  (v_{k,1})'=0
\end{split}
\end{gather}

 For $k=1$, with hypothesis 1.a,  $S_1$ is a sum of bounded periodic
 functions; it is orthogonal to  $e^{\pm it}$, there is no resonance.
For $k\neq 1$, there is no  resonance as
$\frac{\omega_{k}}{\omega_{1}} \notin \mathbb{Z}$ with  hypothesis
1.b.

So  $v_{k,1}$ belongs to $C^{(2)}$ for $k=1,..., n$
\item Then we perform a change of function
$$v_{k}^{\epsilon}=v_{k,1}+ v_{k,2}^{\epsilon}$$
and  $v_{k,2}^{\epsilon}$  are solutions of the following system :

\begin{gather}
\begin{split}
\label{eq:vk2}
  \omega_1^2 (v_{k,2})"+\omega_k^2v_{k,2}=\epsilon
  g_{k,2}(t,v_{k,2},\epsilon), ~ k=1, \dots, N\\
  v_{k,2}^{\epsilon}(0)=0, ~ (v_{k,2}^{\epsilon})'=0, ~ k=1, \dots, N
\end{split}
\end{gather}
with $$ g_{k,2}(t,....,v_{k,2}^{\epsilon},....)=g_{k}(t,...,v_{k,1}+v_{k,2}^{\epsilon},....)$$
where  $~ g_{k,2} ~$  satisfies the same  hypothesis as $g_{k}$:\\
for all $R>0$ there exists $k_R$ such that for  $ \parallel u_{k} \parallel \le R$, $ \parallel v_{k} \parallel \le R$, the following inequality holds for $k=1, \dots, N$ :
\begin{equation}
\label{eq:gk2}
 \parallel g_{k,2}(t,u_{k},\epsilon)-g_{k,2}(t,v_{k},\epsilon)\parallel \le k_R  \parallel u_{k}-v_{k} \parallel
\end{equation}

Using Duhamel principle, the 
 solution or the equation \eqref{eq:vk2} satisfies:
\begin{equation*}
  v_{k,2}^{\epsilon}=\epsilon \int_0^t\sin(t-s) g_{k,2}(s,v_{k,2}^{\epsilon}(s),\epsilon)ds
\end{equation*}
so 
\begin{multline*}
  \parallel  v_{k,2}^{\epsilon}(t)  \parallel  \le \epsilon
  \int_0^t  \parallel g_{k,2}(s,v_{k,2}^{\epsilon}(s),\epsilon)-
  g_{k,2}(s,0,\epsilon) \parallel  ds + \\
 \epsilon \int_0^t \parallel  g_{k,2}(s,0,\epsilon) \parallel  ds
\end{multline*}
so with \eqref{eq:gk2}, we obtain
\begin{equation*}
 \parallel  v_{k,2}^{\epsilon}(t) \parallel \le \epsilon \int_0^t  k \parallel v_{k,2}^{\epsilon}(t)  \parallel ds +\epsilon C t
\end{equation*}
We shall conclude using  Bellman-Gronwall
 lemma; we obtain

\begin{equation*}
\parallel  v_{k,2}(t) \parallel \le \frac{C}{k_R}(exp(\epsilon k_R t)-1)
\end{equation*}

this shows that there exists $\gamma$ such that $|v_{k,2}^{\epsilon}| \le R$ for $t \le
T_{\epsilon}$, which means that it is  in $L^{\infty}(0,T_{\epsilon})$
for $T_{\epsilon}=\frac{\gamma}{\epsilon}$;  also, we have $v_k$ in
${\cal C}(0,T_{\epsilon})$ then as $v_k$ is solution of
\eqref{eq:w"+w=}, it is also bounded in 
  ${\cal C}^{2}(0,T_{\epsilon})$.

 \end{enumerate}

\end{proof}

\begin{theorem}
\label{th:poinc-lyapu}
( of Poincar\'e-Lyapunov, for  example see \cite{sanders-verhulst})
Consider the equation 
$$\dot x=(A+B(t))x +g(t,x), \; x(t_0)=x_0, \; t\ge t_0$$
where $x,x_0 \in \mathbf{R}^n  $,  $A$ is a constant matrix $n\times
n$  with all its eigenvalues with negative real parts; $B(t)$ is a
matrix which is  continuous  with the property $\lim_{t \rightarrow +\infty} \|B(t)\|=0$.
The vector field  is continuous with respect to  $t$ and $x$ is
continuously differentiable with respect to $x$ in a neighbourhood of $x=0$; moreover 
$$g(t,x)= o(\|x\|) \text{ when } \; \|x\|\rightarrow 0 $$ uniformly in $t$.
Then, there exists  constants $C,t_0,\delta,\mu$ such that if $\|x_0\|<\frac{\delta}{C}$ 
$$\|x\| \le C\|x_0\|e^{-\mu(t-t_0)}, t \ge t_0$$ holds
\end{theorem}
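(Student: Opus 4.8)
The plan is to reduce everything to the variation-of-constants formula combined with the Gronwall inequality, exactly in the spirit of the technical lemmas of this Appendix. First I would exploit the hypothesis that $A$ is Hurwitz: since every eigenvalue of $A$ has negative real part, there exist constants $C_0 \ge 1$ and a rate $2\mu > 0$ (any $\mu$ with $2\mu < -\max_i \mathrm{Re}\,\lambda_i(A)$ works) such that the matrix exponential obeys $\|e^{At}\| \le C_0 e^{-2\mu t}$ for all $t \ge 0$. I deliberately reserve a factor of two in the exponent so that a small fraction of the decay can later be spent absorbing the perturbations $B$ and $g$.

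Next I would quantify the smallness of those perturbations. Fix a threshold $\eta > 0$, to be chosen at the end. Because $\|B(t)\| \to 0$ as $t \to +\infty$, there is a time $t_0$ with $\|B(t)\| \le \eta$ for all $t \ge t_0$; because $g(t,x) = o(\|x\|)$ uniformly in $t$, there is $\delta_0 > 0$ with $\|g(t,x)\| \le \eta \|x\|$ whenever $\|x\| \le \delta_0$. With these thresholds fixed, I write the solution through Duhamel's formula
$$x(t) = e^{A(t-t_0)} x_0 + \int_{t_0}^t e^{A(t-s)} \bigl[ B(s) x(s) + g(s, x(s)) \bigr]\, ds,$$
and, as long as $\|x(s)\| \le \delta_0$ on $[t_0, t]$, I estimate
$$\|x(t)\| \le C_0 e^{-2\mu(t-t_0)} \|x_0\| + 2 C_0 \eta \int_{t_0}^t e^{-2\mu(t-s)} \|x(s)\|\, ds.$$

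The decisive step is then a Gronwall argument on the rescaled quantity $u(t) = e^{2\mu(t-t_0)} \|x(t)\|$, which satisfies $u(t) \le C_0 \|x_0\| + 2C_0 \eta \int_{t_0}^t u(s)\, ds$; the Bellman--Gronwall lemma of this Appendix yields $u(t) \le C_0 \|x_0\| e^{2 C_0 \eta (t-t_0)}$, hence
$$\|x(t)\| \le C_0 \|x_0\| \, e^{-(2\mu - 2 C_0 \eta)(t-t_0)}.$$
Choosing $\eta \le \mu/(2C_0)$ makes the exponent at least $\mu$, and setting $C = C_0$, $\delta = \delta_0$ delivers the claimed bound $\|x(t)\| \le C \|x_0\| e^{-\mu(t-t_0)}$.

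The one point that needs genuine care --- and which I expect to be the main obstacle --- is justifying that the hypothesis $\|x(s)\| \le \delta_0$ is not merely assumed but actually holds for every $t \ge t_0$; this is the standard bootstrap. I would set $T^{\ast} = \sup\{\, t \ge t_0 : \|x(s)\| \le \delta_0 \text{ for all } s \in [t_0, t] \,\}$ and suppose $\|x_0\| < \delta/C = \delta_0/C_0$. On $[t_0, T^{\ast})$ the estimate above is valid and gives $\|x(t)\| \le C_0 \|x_0\| e^{-\mu(t-t_0)} \le C_0 \|x_0\| < \delta_0$; by continuity of $x$ this strict inequality rules out $\|x(T^{\ast})\| = \delta_0$, so $T^{\ast} = +\infty$, the decay estimate propagates to all $t \ge t_0$, and the proof closes. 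Local existence and uniqueness of the trajectory on each finite interval, needed even to speak of $x(s)$, follows from the continuity in $t$ and the $C^1$ (hence locally Lipschitz) dependence on $x$ assumed in the statement.
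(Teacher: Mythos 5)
Your proof is correct. The paper itself gives no proof of this theorem --- it is quoted from the literature with a pointer to \cite{sanders-verhulst} --- and your argument (Hurwitz bound on $e^{At}$ with a reserved margin in the decay rate, choice of $t_0$ and $\delta_0$ to make $B$ and $g$ small, variation of constants, Gronwall on $e^{2\mu(t-t_0)}\|x(t)\|$, and the continuation/bootstrap to keep the trajectory in the ball $\|x\|\le\delta_0$) is precisely the standard proof found in that reference, so there is nothing to reconcile.
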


\subsection{Numerical computations of Fourier transform }
Assuming a function $f$ to be almost-periodic, the Fourier
coefficients are :
\begin{equation}
  \label{eq:four-ap}
  \alpha_n=\lim_{T \rightarrow +\infty} \int_0^T f(t) e^{-i \lambda_n t} dt
\end{equation}
where $\lambda_n$ are countable Fourier exponents of $f$.
(for example, see Fourier coefficients of an almost-periodic function in http://www.encyclopediaofmath.org/).
For numerical purposes, we chose $T$ large enough and
with a fast Fourier transform, we compute  numerically
the Fourier coefficients of a function of period $T$ equal to $f$ in this interval.

\subsection{Another way of computing the maximum amplitude}
\label{sec:anotherway}
This is another way of computing some results of $\S$ \ref{subsubsec:maxstatsol}.
Eliminating $\beta$ at first order in \eqref{eq:g1+A_g2+A=0} , we get that $a$ is solution of $f(a,\beta,\sigma,\epsilon)=0$ with
$$f=\frac{-F_m^2}{4 \omega^2}+(-\frac{\lambda a}{2}+\epsilon A_{1})^2+ 
\left (\frac{3da^{3}}{8\omega_{}}-\sigma a + \epsilon a A_{2}
\right )^2 + \mathcal O(\epsilon^2)$$

 We look for $a_{}$ maximum
with respect to $\sigma$; it will be reached at a value denoted
$\sigma^*$ which depends on $\epsilon$.
By differentiating, we get that
$$\frac{\partial a}{\partial \sigma}=-\frac{\frac{\partial f}{\partial
    \sigma}+\frac{\partial f}{\partial \beta}\frac{\partial \beta}{\partial \sigma}
}{\frac{\partial f}{\partial a}}$$
So $\sigma^*$ is solution of 
\begin{equation}
  \label{eq:dfs+dfg=0}
  \frac{\partial f}{\partial \sigma} +\frac{\partial f}{\partial
    \beta}\frac{\partial \beta}{\partial \sigma}=0 ~\text{ with }~
  \frac{\partial f}{\partial a} \neq 0;
\end{equation}
we compute the terms involved in the previous equation;
$$ \frac{\partial f}{\partial \sigma}=2\epsilon (-\frac{\lambda
  a^{*}}{2}+\epsilon A_{1}) \frac{\partial  A_{1}}{\partial \sigma}+ 
2\left (\frac{3da_{}^{\ast 3}}{8\omega_{}}-\sigma^{*}
  a^{*} + \epsilon a^{*} A_{2}
\right ) \left (-a^{*} + \epsilon  a^{*} \frac{\partial
  A_{2}}{\partial \sigma} \right) + \mathcal O(\epsilon^2)
$$
or
\begin{equation}
\label{eq:dfds=}
  \frac{\partial f}{\partial \sigma}= -2\epsilon (\frac{\lambda
    a^{*}}{2}) \frac{\partial A_{1}}{\partial \sigma}
-2a^{*}  (\frac{3da^{\ast 3}}{8\omega_{}}-\sigma^{*}
  a^{*}) - 2\epsilon a^{\ast 2} A_{2} + 2\epsilon(\frac{3da^{\ast 3}}{8\omega_{}}-\sigma^{*}
  a^{*})a^{*}\frac{\partial A_2}{\partial \sigma}
 + \mathcal O(\epsilon^2)
\end{equation}
we simplify for $a=a_0^*+\mathcal O(\epsilon), \; \sigma=\sigma_0^*+\mathcal O(\epsilon)$
\begin{multline*}
  \frac{\partial f}{\partial \sigma}=     -2a_{0}^{*}\left (\frac{3da_{0}^{* 3}}{8\omega_{}}-\sigma_0^* a_0^{*} \right )
 -2\epsilon a_{1}^{*}\left (\frac{3da_{0}^{* 3}}{8\omega_{}}-\sigma_0^* a_0^{*} \right )
-2\epsilon a_0^{*}\left(\frac{9da_0^{* 2}a_1^{*}}{8\omega}-a_0^{*}\sigma_1^{*}-\sigma_0^{*} a_1^{*} \right)
  \\\epsilon\lambda a_0^{*} \frac{\partial A_{1,0}^*}{ \partial \sigma}
-2\epsilon  \left (\frac{3da_{0}^{* 3}}{8\omega_{}}-\sigma_0^* a_0^{*}
\right)  a_0^{}
\frac{\partial A_{2,0}^*}{ \partial \sigma} - 2\epsilon a_0^{* 2} A_{2,0}^* +\mathcal 0\epsilon^2)
\end{multline*}
We use \eqref{eq:a0s0=}  and the lower order term cancels;
\begin{equation*}
\begin{split}
  \frac{\partial f}{\partial \sigma}&=    
-2\epsilon a_0^{*}\left(\frac{9da_0^{* 2}a_1^{*}}{8\omega}-a_0^{*}\sigma_1^{*}-\sigma_0^{*} a_1^{*} \right)
  +\frac{\epsilon\lambda^2 a_0^{*2}}{4 \omega} 
 -2 \epsilon a_0^{* 2} A_{2,0}^* +\mathcal O(\epsilon^2)
\\
&=    
-2\epsilon a_0^{*}\left(3 \sigma_0^* a_1-a_0^{*}\sigma_1^{*}-\sigma_0^{*} a_1^{*} \right)
  +\frac{\epsilon\lambda^2 a_0^{*2}}{4 \omega} 
 - 2\epsilon a_0^{* 2} A_{2,0}^* +\mathcal O(\epsilon^2)
\\
 &=  \epsilon a_0^* \Big[
- 2 \left(2 \sigma_0^* a_1-a_0^{*}\sigma_1^{*} \right)
  +\frac{\lambda^2 a_0}{4 \omega}  -  2a_0 A_{2,0}^* \Big] +\mathcal O(\epsilon^2)
\\
&=  \epsilon a_0^* \Big[ 2a_0^{*}\sigma_1^{*}
- 4 \sigma_0^* a_1
  +\frac{\lambda^2 a_0}{4 \omega}  -  2a_0 A_{2,0}^* \Big]
  +\mathcal O(\epsilon^2).
\end{split}
\end{equation*}

We compute the derivative with respect to $\beta$;
\begin{equation}
 \frac{\partial f}{\partial \beta}= 2\epsilon \Big[ (-\frac{\lambda a}{2}+\epsilon
 A_{1})\frac{\partial A_1}{\partial \beta}-
\left (\frac{3da^{3}}{8\omega_{}}-\sigma a - \epsilon a A_{2}
\right ) a \frac{\partial A_2}{\partial \beta}  \Big] +\mathcal O(\epsilon^2)
\end{equation}
and for $a_0^*, \;\beta_0^*$
\begin{equation}
   \frac{\partial f}{\partial \beta}=
- \epsilon  \lambda
     a_0^*\frac{\partial A_1^*}{\partial \beta} +\mathcal O(\epsilon^2)
\end{equation}
the partial derivatives of $A_1, \; A_2$ are computed at $a=a_0^*, \;
\beta=\beta_0^*$, we get:

\begin{equation}
   \frac{\partial f}{\partial \beta}= -\epsilon  \frac{\lambda^2
     a_0^{*}F_m}{8 \omega^2}=-\epsilon  \frac{\lambda^3
     a_0^{*2}}{8 \omega}
\end{equation}
and
\begin{equation}
\label{eq:dfdgdgds}
   \frac{\partial f}{\partial \beta}\frac{\partial \beta}{\partial 
     \sigma}= \epsilon  \frac{\lambda^2
     a_0^{*2}}{4 \omega}.
\end{equation}
We use \eqref{eq:dfds=}, \eqref{eq:dfdgdgds}  in \eqref{eq:dfs+dfg=0};
this last equation  defines implicitly $\sigma^*$ as a function
of $\epsilon$; we use the expansions \eqref{eq:ags_exp},   and we
get
\begin{align*}
  \frac{\partial f}{\partial \sigma}+ \frac{\partial f}{\partial
    \beta}\frac{\partial\beta}{\partial \sigma}&=\epsilon a_0^* \Big[ 2a_0^{*}\sigma_1^{*}
- 4 \sigma_0^* a_1
  +\frac{\lambda^2 a_0}{4 \omega}  -  2a_0 A_{2,0}^* \Big]
+\epsilon  \frac{\lambda^2    a_0^{*2}}{4 \omega}  +\mathcal O(\epsilon^2) \\
&=\epsilon a_0^* \Big[ 2a_0^{*}\sigma_1^{*}
- 4 \sigma_0^* a_1
  +\frac{\lambda^2 a_0}{2 \omega}  -  2a_0 A_{2,0}^* \Big]
  +\mathcal O(\epsilon^2) \\
&=2\epsilon a_0^* \Big[ a_0^{*}\sigma_1^{*}
- 2 \sigma_0^* a_1
  +\frac{\lambda^2 a_0}{4 \omega}  -  a_0 A_{2,0}^* \Big]
  +\mathcal O(\epsilon^2) \\
&=2\epsilon a_0^{*2} \Big[ \sigma_1^{*}
- 2 \frac{\sigma_0^* a_1}{a_0}
  +\frac{\lambda^2 }{4 \omega}  -  A_{2,0}^* \Big]
  +\mathcal O(\epsilon^2).
\end{align*}
So we obtain
\begin{align}
  \sigma_1^{*}&=
 2 \frac{\sigma_0^* a_1}{a_0}
  -\frac{\lambda^2 }{4 \omega}  +  A_{2,0}^*\\
&= 2 \frac{\sigma_0^* a_1}{a_0}
  -\frac{\lambda^2 }{4 \omega}  - \frac{5 \sigma_0^2}{12
    \omega}-\frac{5c^2a_0^{*2}}{12 \omega^3}
\end{align}

and we obtain with \eqref{eq:a1=}:
\begin{align}
  \sigma_1^*&= 2 \frac{\sigma_0^* }{a_0} \left(\frac{-a_0^*\sigma_0^*}{\omega}\right) -\frac{\lambda^2 }{4 \omega} -\frac{5\sigma_0^{,*2} }{12\omega}-\frac{5c^2a_0^{*2}}{12 \omega^3}\\
&= - \frac{29 \sigma_0^2}{12\omega} -\frac{\lambda^2 }{4 \omega} -\frac{5c^2a_0^{*2}}{12 \omega^3}
\end{align}

\bibliography{/home/br/bin/latex/biblio,/home/br/bin/latex/these-hamad,/home/br/bin/latex/biblio-vnl}

\def\cprime{$'$}

\end{document}